\documentclass[10pt]{amsart}

\setlength{\textwidth}{14.cm}
\setlength{\textheight}{21.5cm}

\usepackage{geometry,graphicx,amssymb,amsmath,amsbsy,eucal,amsfonts,mathrsfs,amscd,bm,tcolorbox}
\usepackage{tkz-euclide}

\geometry{
    letterpaper,
    left=   1.2in,
    right=  1.2in,
    top=    1.25in,
    bottom= 1.25in
}
\linespread{1.}

\numberwithin{equation}{section}

\allowdisplaybreaks[3]

\newtheorem{theorem}{Theorem}[section]
\newtheorem{lemma}[theorem]{Lemma}

\newtheorem{corollary}[theorem]{Corollary}
\newtheorem{proposition}[theorem]{Proposition}
\theoremstyle{definition}

\theoremstyle{remark}
\newtheorem{remark}[theorem]{Remark}

\newcommand{\bF}{\bld{F}}
\newcommand{\bxi}{\bld{\xi}}

\newcommand{\jump}[1]{\left[\hspace{-0.025in}\left[#1\right]\hspace{-0.025in}\right]}

\newcommand{\curl}{{\ensuremath\mathop{\mathrm{curl}\,}}}

\newcommand{\tnorm}[1]{\vert\hspace{-0.3mm}\Vert#1\Vert\hspace{-0.3mm}\vert}

\newcommand{\bld}[1]{\boldsymbol{#1}}

\newcommand{\bY}{\bld{Y}}
\newcommand{\bv}{\bld{v}}
\newcommand{\bw}{\bld{w}}

\newcommand{\bn}{\bld{n}}
\newcommand{\bu}{\bld{u}}
\newcommand{\bhu}{\hat{\bld{u}}}
\newcommand{\bff}{\bld{f}}

\newcommand{\bU}{\bld{U}}
\newcommand{\bE}{\bld{E}}

\newcommand{\bV}{\bld{V}}

\newcommand{\bldeta}{\bld{\eta}}

\newcommand{\bPi}{\bld{\Pi}}
\newcommand{\bpi}{\bld{\pi}}

\newcommand{\bzeta}{\bld{\zeta}}

\newcommand{\bH}{\bld{H}}

\newcommand{\by}{\bld{y}}

\newcommand{\bx}{\bld{x}}

\newcommand{\bL}{\bld{L}}

\newcommand{\bbeta}{{\bm \beta}}

\newcommand{\bPsi}{{\bm \Psi}}
\newcommand{\bvarphi}{\bm \varphi}

\newcommand{\bc}{\bm c}

\title[Error estimates for the Smagorinsky turbulence model]{Error estimates for the Smagorinsky turbulence model: enhanced stability through
  scale separation and numerical stabilization}

\author{Erik Burman, Peter Hansbo, Mats G. Larson}

\begin{document}

\begin{abstract}
In the present work we show some results on
the effect of the Smagorinsky model on the stability of the associated
perturbation equation. We show that in the presence of a spectral gap, such that the
flow can be decomposed in a large scale with moderate gradient
and a small amplitude fine scale with arbitratry gradient, the
Smagorinsky model admits stability estimates for perturbations, with exponential growth
depending only on the large scale gradient. We then show in the
context of stabilized finite element methods that the same result
carries over to the approximation and that in this context, for
suitably chosen finite element spaces the Smagorinsky model acts as a
stabilizer
yielding close to optimal error estimates in the $L^2$-norm for smooth
flows in the pre-asymptotic high
Reynolds number regime. 
\end{abstract}

\maketitle

\section{Introduction}
The modelling and accurate approximation of turbulent flows at high
Reynolds numbers remain an outstanding challenge.
One of the earliest attempts at the design of a turbulence model is the
one attributed to
Smagorinsky \cite{Smag, Smag93}, where a nonlinear viscosity replaces the
Reynolds stresses, thereby providing closure to the filtered
Navier-Stokes' equations. The model is believed to be inspired by early work on shock capturing
methods for conservation laws due to Von Neumann and Richtmyer \cite{VNeu50}. 

An appealing feature is that it has been shown that the Navier-Stokes' equations
regularized system admits a unique solution \cite{Lady68}, with
enhanced regularity \cite{BdV09a, BdV09b}. It has also
been shown using scaling arguments that away from boundary layers the
dissipation rate matches the Kolmogorov decay rate \cite{La16} for
uniformly turbulent flows. On
the other hand the Smagorinsky model is considered to be too
dissipative, in particular in the laminar zone. The parameter of the
model has to
be tuned differently to obtained the best results depending the flow
problem \cite{Ge09} and on the numerical method used \cite{MGS07},
pointing to a subtle interplay between the flow and the numerical
method for the performance of the Smagorinsky model in LES. Nevertheless thanks to its simplicity and its
physical consistency, the Smagorinsky model
remains an important tool for the modelling of turbulent high Reynolds
flows. 

The objective of the present work is to show some results on
the effect of the Smagorinsky model on the stability of the fluid
dynamics and on numerical approximation. We show that in the presence of a spectral gap, such that the
flow can be decomposed in a large scale with moderate gradient
and a small amplitude fine scale with arbitrary gradient, the
Smagorinsky model admits stability estimates with exponential growth
depending only on the large scale gradient. We then show in the
context of stabilized finite element methods that the same result
carries over to the approximation and that in this context, for
suitably chosen finite element spaces the Smagorinsky model acts as a
stabilizer
yielding optimal error estimates for smooth flows in the high
Reynolds number regime. This gives some evidence that the Smagorinsky
model is efficient in situations where the flow has a clearly defined
spectral gap. This is not in general the case for turbulent flows, but
is believed to occur at high Reynolds number, near rough surfaces or
the atmospheric boundary layer \cite{GT09}, or more generally in atmospheric and geophysical flows \cite{FP70,We80,PFC83}.
Interestingly, this is the context for which the Smagoringsky model was
originally proposed \cite{Smag}. 

The notion of Large Eddy Simulation (LES) is not very well
defined. Depending on the context it is considered either as a modelling
issue, or a numerical technique. In the first case the partial
differential equation is perturbed by adding terms that model the
so-called Reynolds stress term expressing the 
effect of unresolved scales, where the smallest length scale is a model
parameter. The regularized model should be well-posed and can then be
discretized. The challenge here is on the modelling side: how can one
design a model that allows for a more stable representation of
quantities of interests? Observe that to distinguish this method from
the situation where the turbulence model is used for numerical
stability, the model parameters should be
fixed before discretization and not coupled to the mesh-size. In case
such a coupling is introduced it would need to be justified, in
particular in the asymptotic limit. A typical requirement is that the
physically relevant solution, or a so-called ``suitable'' solution is
obtained in the limit \cite{GP05}.
For so called numerical LES, or Implicit LES, on the other hand \cite{Bo07}, the Navier-Stokes' equations
are considered, but a numerically stable discretization method is used
ensuring that sufficient dissipation is added for the numerical scheme to
remain stable, but not so much that smooth structures of the flow are destroyed. The underpinning idea is
that any quantity of interest that has sufficient stability properties
will then be computable \cite{HJ07, Bu15}, if the numerical method
resolves a sufficient range of scales so that the effect of unresolved
scales is negligible. To be able to claim that an analysis of a LES
method is successful it is necessary to address the question of
stability, but there appears to be no stability estimates with
moderate exponential growth, for any
quantity of interest for the
Navier-Stokes equations in the high Reynolds number regime. The best rigorous result of quantitative
bound of perturbation growth in the inviscid
regime appears to be \cite{AD04}, where a two-dimensional flow is analyzed
and the stability constant grows with a double exponential in time. 

Finite element analysis typically relies on the
velocities having bounded gradients \cite{JS86, HS90}, with a slight
improvement possible in the two-dimensional case
using parabolic regularization of the viscous scales
\cite{Bu15}. Typically the stability constant is on the form
$\exp(K T)$, with $K$ proportional to the maximum gradient of the
velocity in the flow.

For the discussion below we will for simplicity consider the $p$-Laplacian model that acts
on the whole gradient instead of the symmetric gradient as in the
Smagorinsky case. This regularization was introduced
in \cite[Chap. 2, sec. 5]{Lions69}, where also existence and
uniqueness of solutions were studied. The below arguments can however easily be extended to the
more common versions of the Smagorinsky model using the deformation
tensor. The only essential difference is the need to apply Korn's
inequality in $L^p$, see for instance \cite[Chapter 7]{DD12} to get
control of the gradient of the solution in $L^p$ through the control
of its symmetric part.

\paragraph{{\bf {Main results and outline}}}

There appears to be very little known about the stability properties
of the Smagorinsky model, or for that matter its qualities as a
stabilizing term for numerical methods for high Reynolds flow in the
laminar regime. 
In this paper we will consider the Smagorinsky model first, as a
turbulence model and then, in a finite element method, as a
stabilizing term. In the former case we prove a stability result
(Theorem \ref{thm:red_exp})
showing that the perturbation growth is independent of high frequency,
low amplitude oscillations in the solution and that as the
perturbation error becomes larger, its growth is moderated by the
nonlinear feedback of the turbulence model.

On the other hand the use of the
Smagorinsky model as a stabilizer in the framework of divergence free
affine finite elements, allows us to prove the classical
$L^2$ error estimates of order $O(h^{\frac32})$ (where $h$ denotes the mesh-size), that are the best known estimates for stabilized finite
element methods using affine approximation for the Navier-Stokes' equations in the high Reynolds
laminar regime \cite{JS86, HS90, JRB95, BF07}. Indeed if $\bu$
denotes the solution to the Navier-Stokes equations and $\hat \bu_h$
denotes the finite element solution stabilized using the Smagorinsky
model with characteristic lengthscale $O(h)$ and the flow is laminar
in the pre asymptotic regime where the mesh-size is
larger than the viscosity,
then there (Theorem \ref{thm:error_bound}) 
\[
\|(\bu - \hat \bu_h)(T)\|_{L^2(\Omega)} \leq C(u) h^{\frac32}, \quad T>0
\]
where $C(u)$ is a constant that depends on Sobolev norms of the exact
solution to the Navier-Stokes' equations and time. In particular there
are no inverse powers of the viscosity in $C(u)$. The exponential growth of
the constant inherits the scale separation properties of the
continuous equations.

Finally we show the performance of the Smagorinsky finite element method qualitatively of
some academic test cases,
illustrating that the method returns a solution also in underresloved
computations where the standard Galerkin breaks down, but also that 
this solution is sensitive to the choice of the parameter in the
Smagorinsky model. A large parameter gives an overly diffusive
approximation on coarse meshes.

The main conclusion of this work is that the Smagorinsky model acts both on the level of
stability of perturbations and as a numerical stabilizer. This shows
that the nonlinear viscosity can be interpreted both as a turbulence
model for LES and as an implicit LES model based on stable numerical
simulation, provided the numerical method is chosen carefully to
balance and complement the dissipation properties.

\section{Smagorinsky model problem}

Let $\Omega \subset \mathbb{R}^d$, $d=2,3$, denote an
open domain with smooth (or convex polyhedral) boundary $\partial \Omega$. Consider the time interval $I = [0,T]$ and
denote the space time domain $\Omega \times I$ by $Q$.
We consider the Navier-Stokes-Smagorinsky equations on the form
\begin{equation}\label{eq:NS_smag}
\begin{array}{rl}
\partial_t \hat \bu + (\hat \bu \cdot \nabla )\hat \bu + \nabla p -\mu
\Delta \hat \bu -
\nabla \cdot \hat \nu(\hat \bu) \nabla \hat \bu& = {\bf f} \quad \mbox{ in } Q\\[3mm]
\nabla \cdot \hat \bu & = 0 \quad \mbox{ in } Q.
\end{array}
\end{equation}
The artificial viscosity matrix is diagonal with  $(\hat \nu(\bu))_{ii} =
\nu(\delta) |\nabla \bu|_F$, $i=1,\hdots,d$, where $|\cdot|_F$ denotes the Frobenius
norm, i.e. $|\nabla \bu|_F = (\sum_{i=1}^d |\nabla u_i|^2)^{1/2}$, and $\nu:\mathbb{R}^+
\mapsto \mathbb{R}^+$ a monotonically increasing function with
$\nu(0)=0$ and $\delta$ a characteristic
lengthscale. Below we will consider the choice $\nu(\delta) :=
\delta^2$ which corresponds to the well-known
Smagorinsky model \cite{Smag93}, using the gradient instead of the
deformation tensor. Taking
$\delta=0$ on the other hand results in the standard Navier-Stokes'
equations. The Smagorinsky model is not dependent on a particular
filter and the length scale $\delta$ can be allowed to vary in space
and time. As a consequence Van Driest wall damping \cite{VD56} or Germano dynamic
models can be included in the arguments below. For simplicity we here consider constant $\delta$ and
either homogeneous Dirichlet boundary conditions or periodic boundary
conditions for \eqref{eq:NS_smag}. We introduce the vectorial Bochner
space
\[
\bV:= [L^3(I;W^{1,3}(\Omega)) \cap L^\infty(I;L^2(\Omega))]^d.
\]
We will assume that the solution to the Navier-Stokes' equations,
$\bu$, has the additional regularity $\bu \in
\bV$. 
 Since \eqref{eq:NS_smag} results in an
$O(\delta^2)$ perturbation of the Navier-Stokes' equations, it is
reasonable to assume that the convergence from the
Smagorinsky-Navier-Stokes' system to the Navier-Stokes' system is
at best $O(\delta^2)$ for a smooth enough solution (to the
Navier-Stokes' system). If the solution is
not smooth enough the effect of the perturbation will be larger. For
instance, as shown below, if $\bu \in \bV$ the
perturbation due to the model is $O(\delta)$. However, the error due to
the inconsistency is not the whole story, indeed the error can
typically be written (see Theorem \ref{thm:red_exp} below)
\begin{equation}\label{eq:bound}
\|(\bu - \tilde \bu)(T)\|_{L^2(\Omega)} \leq C(\bu) \exp(\alpha(T)) \delta^\beta,
\end{equation}
where $\bu$ is the solution to the Navier-Stokes' equations, $\tilde
\bu$ is the solution to the Navier-Stokes'-Smagorinsky equations,
$C(\bu)$ is a constant only dependent on Sobolev norms of $\bu$ and
$\alpha(T)$ is a function, depending on the exact solution, quantifying the stability of the nonlinear
problem and $\beta$ is the power of the nonconsistency, typically in
the interval $(0,2)$. 
 
The consequence of this for practical computation is that for the situation where $\delta$ coincides with
the cell-width of the computational mesh, improving the approximation
order beyond that of affine finite elements (formally giving an $O(h^2)$
$L^2$-error bound) will not result in an improved approximation for
laminar flows: the consistency error $O(\delta^2)$ of the Smagorinsky model will dominate.

For laminar flows where the flow field has small
spatial variation also the coefficient $\alpha(T)$, which typically scales as
$|\bu|/\nu^{\frac12}$ or $\int_I \|\nabla \bu(\cdot,t)\|_{L^\infty(\Omega)}$, is
moderate and one can conclude that the perturbation error is
essentially determined by the size of $\delta$ in the right hand side. Decreasing $\delta$ will lead to a
decrease in the error and the natural (and trivial) choice is $\delta=0$ for which
the error is zero, since the two solutions coincide: there is no need
for a model in laminar flows.

In the less regular case, $\bu \in \bV$, which
seems to be the minimum requirement for a bound of the type
\eqref{eq:bound} to make sense, the situation is different. Now
$\alpha$ can not be assumed small, so the right hand side depends both
on the coefficient $\delta$ and the exponential growth. Except for
very short times the
exponential growth will make the bound meaningless. 


We will
show in the present paper that we can refine our definition of the
coefficient $\alpha$ by letting it  
implicitly depend on $\delta$, so that an increase in $\delta$, although
it increases the consistency error, can lead to a decrease in the
exponential growth through the nonlinear feedback of the Smagorinsky
operator. This leads to a potential moderation of the error growth through an
increase in $\delta$, which can explain
why this behavior has been observed in computations \cite{Ge09}. Observe that
this result depends on the structure of the actual solution $\bu$, so
it is not a general statement and only of qualitative nature. The
analysis is also done in the $L^2$-norm which is too strong to be a
reasonable target for turbulent flows. Unfortunately the analysis of other target
quantities appears to be very difficult due to the complexity of the
linearized adjoint to the Smagorinsky perturbation equation.

Nevertheless one can argue that if this stabilizing effect is visible
already for the $L^2$-norm it is likely to be more accentuated for the
approximation of averaged quantities where the effect of fluctuations
is expected to be less important due to cancellation.

\subsection{Notation and technical results}
To reduce the number of non-essential constants we will use the
notation $a \lesssim b$ for $a \leq C b$ where $C$ is an $O(1)$
constant independent of the viscosity or the mesh parameter $h$.

We will use standard notations for most Sobolev spaces and norms, but we will 
not distinguish in the notation between the scalar,
vector and tensor valued cases. The space of divergence free functions
in $\bV$ and $[L^2(\Omega)]^d$ will be denoted
\[
\bV_0 := \{\bv \in \bV : \nabla \cdot \bv = 0\}.
\]
and
\[
\bL_0 := \{\bv \in [L^2(\Omega)]^d: \nabla \cdot \bv = 0\}.
\]
To consider different boundary conditions we add a superscript $0$ on
spaces of functions with zero trace on the domain
boundary, for example
$\bV^0_0$.
To simplify we define the $L^2$-inner product for some subset $X \subset
\mathbb{R}^s$, $s=1,\hdots,d+1$, by
\[
(u,v)_X := \int_X u v ~\mbox{d}x, \quad \forall u,v \in L^2(X),
\]
with associated norm $\|\cdot\|_X := (\cdot,\cdot)^{\frac12}_X$.
With some abuse of notation we will not distinguish between the inner
product of scalar, vector or tensor valued functions. The vector
valued case is defined by
\[
(\bu,\bv)_X := \int_X \bu \cdot \bv ~\mbox{d}x, \quad \forall \bu,\bv \in [L^2(X)]^d,
\]
with $\bu \cdot \bv := \sum_{i=1}^d \bu_i \bv_i$
and the tensor valued case is defined by
\[
(\bxi, \bzeta)_X := \int_X \bxi : \bzeta ~\mbox{d}x, \quad \forall \bxi, \bzeta\in
[L^2(X)]^{d\times d},
\]
where $\bxi : \bzeta := \sum_{i,j=1}^d \bxi_{ij} \bzeta_{ij}$. The tensor valued $L^p$ norm will be
defined by
\begin{equation}\label{eq:Lpdef}
\|\bxi\|^p_{[L^p(X)]^{d\times d}}:= \int_X |\bxi|_F^p ~\mbox{d}
x,\quad p \in [2,3], \, \bxi \in [L^p(X)]^d.
\end{equation}
The norm on $C^0(\bar \Omega)$ will be denoted,
\[
\|v\|_\infty := \sup_{\bx \in \bar \Omega} |v(\bx)|.
\]
We also recall H\"olders inequality and Young's inequality that will
be used repeatedly below. For $p,q \in [1,\infty]$ with $p^{-1} +
q^{-1} = 1$ there holds
\begin{equation}\label{eq:holder}
(u,v)_{X} \leq \|u\|_{L^p(X)} \|v\|_{L^q(X)}, \quad u
\in L^p(X), \; v
\in L^q(X),
\end{equation}
\begin{equation}\label{eq:young}
a b \leq p^{-1} (a/\epsilon) ^p + q^{-1} (\epsilon b)^q,\quad a,b
\in \mathbb{R},\quad \epsilon >0.
\end{equation}
The following Poincar\'e-Friedrichs inequality holds for all $1\leq  p <
\infty$.
\begin{lemma}\label{lem:PF}
Let $1\leq  p <
\infty$ then for all $\bv \in W^{1,p}(\Omega)$ with
$\bv\vert_{\partial \Omega} = 0$, 
\[
\|\bv\|_{W^{1,p}(\Omega)} \leq c \|\nabla \bv\|_{L^p(\Omega)}.
\]
\end{lemma}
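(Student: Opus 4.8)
The statement is the classical Poincar\'e--Friedrichs inequality, so the plan is the standard one: first establish the zero-order Poincar\'e estimate $\|\bv\|_{L^p(\Omega)} \le c_0 \|\nabla \bv\|_{L^p(\Omega)}$, and then add $\|\nabla \bv\|_{L^p(\Omega)}^p$ to both sides to recover the full $W^{1,p}$-norm on the left. Since $\Omega$ is bounded with smooth (or convex polyhedral, hence Lipschitz) boundary, the subspace of $W^{1,p}(\Omega)$ consisting of functions with vanishing trace coincides with the closure $W^{1,p}_0(\Omega)$ of $C^\infty_c(\Omega)$, and both sides of the asserted inequality are continuous with respect to the $W^{1,p}(\Omega)$-norm; hence it suffices to prove the estimate for $\bv \in C^\infty_c(\Omega)$ and then pass to the limit.

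For $\bv \in C^\infty_c(\Omega)$, extend $\bv$ by zero to all of $\mathbb{R}^d$ and fix a slab $S = \{\bx \in \mathbb{R}^d : a < x_1 < b\}$ containing $\Omega$, with width $L := b-a$. For each scalar component $v$ of $\bv$ write, by the fundamental theorem of calculus, $v(\bx) = \int_a^{x_1} \partial_1 v(t,x_2,\dots,x_d)\,\mathrm{d}t$, the integrand vanishing outside $\Omega$. If $p>1$, H\"older's inequality \eqref{eq:holder} in $t$ with exponents $p$ and $p/(p-1)$ gives $|v(\bx)|^p \le L^{p-1}\int_a^b |\partial_1 v(t,x_2,\dots,x_d)|^p\,\mathrm{d}t$; for $p=1$ the same bound holds trivially with $L^{p-1}=1$. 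Integrating over $x_1 \in (a,b)$ produces one more factor $L$, and integrating over the remaining variables yields $\|v\|_{L^p(\Omega)}^p \le L^p \|\partial_1 v\|_{L^p(\Omega)}^p \le L^p \|\nabla \bv\|_{L^p(\Omega)}^p$. Summing over the $d$ components and absorbing the dimensional factor into the constant gives the Poincar\'e estimate with $c_0 = c_0(L,d,p)$, and then $\|\bv\|_{W^{1,p}(\Omega)}^p = \|\bv\|_{L^p(\Omega)}^p + \|\nabla \bv\|_{L^p(\Omega)}^p \le (1+c_0^p)\|\nabla \bv\|_{L^p(\Omega)}^p$, so the lemma follows with $c = (1+c_0^p)^{1/p}$.

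There is no genuine obstacle; the one point that deserves attention is the density/trace reduction in the first step, which is where the regularity of $\partial\Omega$ enters, ensuring that zero trace is equivalent to membership in $W^{1,p}_0(\Omega)$. If one prefers to avoid the slab computation, an alternative for $p>1$ is a compactness argument: were the inequality false there would be a sequence $\bv_n$ with $\|\bv_n\|_{W^{1,p}(\Omega)}=1$ and $\|\nabla \bv_n\|_{L^p(\Omega)}\to 0$; the compact embedding of $W^{1,p}(\Omega)$ into $L^p(\Omega)$ on the bounded domain $\Omega$ extracts a subsequence converging in $L^p(\Omega)$ to a limit with zero gradient and zero trace, hence to $0$, contradicting the normalization. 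I would use the direct computation, since it is constant-explicit and also covers the case $p=1$.
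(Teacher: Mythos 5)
The paper states this lemma as a classical fact and supplies no proof of its own, so there is nothing to diverge from; your argument is correct and is the standard textbook proof. The density reduction to $C^\infty_c(\Omega)$, the fundamental-theorem-of-calculus representation on a slab combined with H\"older's inequality, and the final absorption of the gradient term into the $W^{1,p}$-norm are all sound, and you correctly flag that the boundary regularity is what makes ``zero trace'' equivalent to membership in $W^{1,p}_0(\Omega)$. One minor remark: the compactness alternative you sketch also covers $p=1$, since the Rellich--Kondrachov embedding $W^{1,p}(\Omega)\hookrightarrow L^p(\Omega)$ is compact for all $1\le p<\infty$ on a bounded Lipschitz domain; your restriction to $p>1$ there is unnecessary, though immaterial since you rely on the direct computation.
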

We will use the notation $\mathcal{T}$ for a quasi-uniform
tesselation of $\Omega$, consisting of simplices $T$ with diameter
$h_T$.  We also introduce the global mesh parameter $h = \max_{T \in
  \mathcal{T}} h_T$. We will denote the broken $L^2$-scalar product
and norm over the elements
of $\mathcal{T}$ by
\[
(u,v) _{\mathcal{T}} := \sum_{T\in \mathcal{T}} (u,v)_{T}, \quad 
\|v\|_{\mathcal{T}} := (v,v) _{\mathcal{T}}^{\frac12}.
\]
 The set of internal faces of the triangulation
$\mathcal{T}$ will be denoted $\mathcal{F}$ and we define the norm of
a function defined on
the skeleton by
\[
(u,v) _{\mathcal{F}} := \sum_{T\in \mathcal{T}} (u,v)_{\partial
    T \setminus \partial \Omega},\quad \|v\|_{\mathcal{F}} := (v,v) _{\mathcal{F}}^{\frac12}.
\]

The following monotonicity and contintuity results of the p-Laplacian
will be useful for the analysis.

\subsubsection{Properties of the p-Laplacian}
For future reference we recall the following two properties:
\begin{lemma}(Monotonicity)
Let $p \ge 2$ then for all $\bxi,
\,\bzeta \in \mathbb{R}^{d\times d}$,
\begin{equation}\label{eq:monotone}
|\bxi - \bzeta|_F^p \leq 2^{p-1} (|\bxi|_F^{p-2} \bxi - |\bzeta|_F^{p-2}
\bzeta, \bxi-\bzeta)_{\mathbb{R}^{d\times d}}.
\end{equation}
\end{lemma}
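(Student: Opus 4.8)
The plan is to deduce \eqref{eq:monotone} from a sharper weighted monotonicity estimate, after which the missing power of $|\bxi-\bzeta|_F$ is recovered by the triangle inequality; the whole argument is pointwise in $\mathbb{R}^{d\times d}$ and entirely elementary. Throughout set $a:=|\bxi|_F$, $b:=|\bzeta|_F$ and write $\bxi:\bzeta$ for the Euclidean inner product $(\bxi,\bzeta)_{\mathbb{R}^{d\times d}}$.

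\emph{Step 1: weighted monotonicity.} I would first establish
\begin{equation}\label{eq:strongmon}
(|\bxi|_F^{p-2}\bxi - |\bzeta|_F^{p-2}\bzeta,\ \bxi-\bzeta)_{\mathbb{R}^{d\times d}} \ \ge\ \tfrac12\,(a^{p-2}+b^{p-2})\,|\bxi-\bzeta|_F^2 .
\end{equation}
A direct expansion of the left-hand side gives $a^p+b^p-(a^{p-2}+b^{p-2})\,\bxi:\bzeta$, and since $|\bxi-\bzeta|_F^2 = a^2+b^2-2\,\bxi:\bzeta$ the terms proportional to $\bxi:\bzeta$ cancel upon subtracting $\tfrac12(a^{p-2}+b^{p-2})|\bxi-\bzeta|_F^2$, leaving the identity
\begin{align*}
&(|\bxi|_F^{p-2}\bxi - |\bzeta|_F^{p-2}\bzeta,\ \bxi-\bzeta)_{\mathbb{R}^{d\times d}} - \tfrac12\,(a^{p-2}+b^{p-2})\,|\bxi-\bzeta|_F^2 \\
&\qquad = a^p+b^p-\tfrac12\,(a^{p-2}+b^{p-2})(a^2+b^2) = \tfrac12\,(a^{p-2}-b^{p-2})(a^2-b^2).
\end{align*}
Because $p\ge2$, the functions $t\mapsto t^{p-2}$ and $t\mapsto t^2$ are both nondecreasing on $[0,\infty)$, so the two factors on the right carry the same sign and their product is $\ge 0$; this proves \eqref{eq:strongmon}.

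\emph{Step 2: recovering the power.} By the triangle inequality $|\bxi-\bzeta|_F \le a+b \le 2\max(a,b)$, whence $|\bxi-\bzeta|_F^{p-2} \le 2^{p-2}\max(a,b)^{p-2} \le 2^{p-2}(a^{p-2}+b^{p-2})$, again using $p\ge2$. Multiplying \eqref{eq:strongmon} by $2^{p-1}$ and inserting this bound,
\begin{align*}
2^{p-1}(|\bxi|_F^{p-2}\bxi - |\bzeta|_F^{p-2}\bzeta,\ \bxi-\bzeta)_{\mathbb{R}^{d\times d}} &\ge 2^{p-2}(a^{p-2}+b^{p-2})\,|\bxi-\bzeta|_F^2 \\
&\ge |\bxi-\bzeta|_F^{p-2}\,|\bxi-\bzeta|_F^2 = |\bxi-\bzeta|_F^p,
\end{align*}
which is \eqref{eq:monotone}. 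The degenerate configurations $a=0$ or $b=0$ need no separate discussion, being already covered by this chain.

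I do not expect a genuine obstacle: the only inequality used in Step 1 is the sign statement $(a^{p-2}-b^{p-2})(a^2-b^2)\ge0$, and Step 2 is just the triangle inequality together with $\max(a,b)^{p-2}\le a^{p-2}+b^{p-2}$. The single point deserving care is bookkeeping of the hypothesis $p\ge2$, which enters through the monotonicity of $t\mapsto t^{p-2}$ in both steps; for $1<p<2$ these inequalities reverse. A side benefit of this identity-based route is that it also delivers the stronger bound \eqref{eq:strongmon}, a standard and convenient form in the analysis of $p$-Laplacian-type operators.
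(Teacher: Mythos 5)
Your proof is correct, and it is genuinely more than the paper offers: the paper does not prove the lemma at all, but simply cites the literature (Glowinski--Marocco, and the $p$-Laplacian monotonicity lemmas in the cited monographs). Your two-step argument is sound. The algebraic identity in Step 1 checks out: expanding gives $a^p+b^p-(a^{p-2}+b^{p-2})\,\bxi:\bzeta$, and subtracting $\tfrac12(a^{p-2}+b^{p-2})(a^2+b^2-2\,\bxi:\bzeta)$ leaves exactly $\tfrac12(a^{p-2}-b^{p-2})(a^2-b^2)\ge 0$ for $p\ge 2$, which is the standard weighted (strong) monotonicity bound. Step 2 correctly converts this into the stated form via $|\bxi-\bzeta|_F^{p-2}\le 2^{p-2}\max(a,b)^{p-2}\le 2^{p-2}(a^{p-2}+b^{p-2})$, and the constant $2^{p-1}$ comes out exactly as in \eqref{eq:monotone}. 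What your route buys, compared with the paper's citation, is a self-contained elementary derivation together with the intermediate estimate \eqref{eq:strongmon}, which is in fact the form most often needed in $p$-Laplacian error analysis (it gives control of $(a^{p-2}+b^{p-2})|\bxi-\bzeta|_F^2$ rather than only $|\bxi-\bzeta|_F^p$). Your remark that the inequalities reverse for $1<p<2$ is also accurate and consistent with the lemma's hypothesis. The only cosmetic point is the interpretation of $|\bxi|_F^{p-2}\bxi$ at $\bxi=0$ when $p=2$, which is handled by the usual convention $0^0\cdot 0=0$ and does not affect the argument.
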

\begin{proof}
For the monotonicity in the vectorial case see Glowinski and Marocco \cite[Lemma 5.1]{GM75} and \cite[Lemma
3.30]{DEGGH18}. In the context of the
Smagorinsky type model considered here see \cite[Lemma 8.88]{John16}.
\end{proof}
\begin{lemma}(Continuity)\label{lem:cont}
For all $\xi,\, \eta \in \mathbb{R}^{d\times d}$ there holds
\begin{equation}\label{eq:cont1}
||\bxi|_F \bxi - |\bzeta|_F \bzeta|_F \leq  (|\bxi|_F + |\bzeta|_F) |\bxi -\bzeta|_F.
\end{equation}
\end{lemma}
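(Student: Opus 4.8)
The plan is to reduce the claim to elementary manipulations in the Euclidean space $(\mathbb{R}^{d\times d}, |\cdot|_F)$, exploiting that $|\cdot|_F$ is the norm induced by the inner product $(\cdot,\cdot)_{\mathbb{R}^{d\times d}}$ and hence is homogeneous and satisfies both the triangle inequality and its reverse. First I would rewrite the difference by inserting the mixed term $|\bxi|_F \bzeta$:
\[
|\bxi|_F \bxi - |\bzeta|_F \bzeta = |\bxi|_F (\bxi - \bzeta) + (|\bxi|_F - |\bzeta|_F)\bzeta.
\]
Taking Frobenius norms, using the triangle inequality, and pulling out the scalar factors by homogeneity gives
\[
\big||\bxi|_F \bxi - |\bzeta|_F \bzeta\big|_F \leq |\bxi|_F\, |\bxi - \bzeta|_F + \big||\bxi|_F - |\bzeta|_F\big|\, |\bzeta|_F.
\]

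Next I would apply the reverse triangle inequality $\big||\bxi|_F - |\bzeta|_F\big| \leq |\bxi - \bzeta|_F$ to the second summand, which yields
\[
\big||\bxi|_F \bxi - |\bzeta|_F \bzeta\big|_F \leq |\bxi|_F\,|\bxi - \bzeta|_F + |\bzeta|_F\,|\bxi - \bzeta|_F = (|\bxi|_F + |\bzeta|_F)\,|\bxi - \bzeta|_F,
\]
which is precisely \eqref{eq:cont1}.

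Since every step is an elementary norm inequality, there is no real obstacle here; the only point worth a moment's care is the bookkeeping in the splitting — one could equally insert $|\bzeta|_F \bxi$ instead of $|\bxi|_F \bzeta$ and conclude by symmetry — and one should make sure each scalar factor stays grouped with the correct matrix argument so that homogeneity of $|\cdot|_F$ can be invoked cleanly. No positivity, monotonicity, or dimension-dependent constant is needed, in contrast to the companion monotonicity estimate \eqref{eq:monotone}.
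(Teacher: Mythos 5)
Your argument is correct and is essentially identical to the paper's proof: the same splitting $|\bxi|_F \bxi - |\bzeta|_F \bzeta = |\bxi|_F(\bxi-\bzeta) + (|\bxi|_F - |\bzeta|_F)\bzeta$ (up to a sign convention), followed by the triangle inequality and the reverse triangle inequality. Nothing is missing.
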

\begin{proof}
The result is elementary. Adding and subtracting $\bzeta$ we have
\[
||\bxi|_F \bxi - |\bzeta|_F \bzeta|_F = ||\bxi|_F (\bxi - \bzeta) -
(|\bzeta|_F  -|\bxi|_F) \bzeta|_F \leq |\bxi|_F|\bxi - \bzeta|_F +
||\bzeta|_F  -|\bxi|_F| |\bzeta|_F. 
\]
The claim then follows by applying the reverse triangle inequality
$||\bzeta|_F  -|\bxi|_F| \leq |\bzeta  -\bxi|_F$ so that
\[
||\bxi|_F \bxi - |\bzeta|_F \bzeta|_F \leq (|\bxi|_F+|\bzeta|_F) |\bzeta  -\bxi|_F.
\]
\end{proof}
%
It follows from \eqref{eq:Lpdef} that for $\bu, \bv \in [L^3(\Omega)]^d$ there holds
\begin{equation}\label{eq:monotone_L3}
\|\nabla (\bu - \bv)\|^3_{L^3(\Omega)} \leq 4 (|\nabla \bu|_F \nabla
\bu - |\nabla \bv|_F \nabla
\bv,\nabla (\bu - \bv))_\Omega.
\end{equation}
Lemma \ref{lem:cont} also implies the continuity
\begin{align}\nonumber
|(|\nabla \bu|_F \nabla \bu - |\nabla \bv|_F \nabla \bv, \nabla
\bw)_\Omega | &\leq ((|\nabla \bu|_F+|\nabla \bv|_F) |\nabla \bu -\nabla
\bv|_F, |\nabla \bw|_F)_\Omega \\ 
\label{eq:continuous}
&\leq 2 (|\nabla \bu|_F |\nabla \bu -\nabla \bv|_F+|\nabla \bu -\nabla \bv|^2_F, |\nabla \bw|_F)_\Omega.
\end{align}
\section{Perturbation equation and scale separation}

We are interested in estimating the growth of perturbations in \eqref{eq:NS_smag}
quantified as the difference of the solution $\hat \bu$, with
$\delta>0$ and the Navier-Stokes' equation $\bu$, with $\delta=0$. To this end we 
will study the perturbation equation obtained
by taking the difference of the equation with $\delta>0$ and with
$\delta=0$ on weak form. 

Let
$\bldeta = \bu - \hat\bu$, then $\bldeta \in \bV_0$ and using
\eqref{eq:NS_smag} we see that 
\begin{align}\label{eq:pert_smag}
&(\partial_t \bldeta,\bv)_{\Omega} + (\hat\bu \cdot \nabla \bldeta,
\bv)_{\Omega} + (\bldeta \cdot \nabla \bu, \bv)_{\Omega} + (\nu
\nabla \bldeta, \nabla \bv)_{\Omega} 
\\ \nonumber
&\qquad \qquad \qquad + (\hat \nu(\bu) \nabla \bu - \hat
\nu(\hat\bu) \nabla \hat\bu, \nabla \bv)_{\Omega} 
= (\hat \nu(\bu) \nabla \bu, \nabla \bv)_{\Omega}
\end{align}
for all $\bv$ such that $\nabla \cdot \bv = 0$.

A classical energy estimate can be obtained from this equation by
testing with $\bldeta$. Clearly 
\[
 (\hat\bu \cdot \nabla \bldeta,
\bldeta )_{\Omega} + (\bldeta \cdot \nabla \bu, \bldeta )_{\Omega} = (\bldeta \cdot \nabla \bu, \bldeta )_{\Omega}
\]
which (as we shall see below) leads to exponential growth with coefficient $\int_I \|\nabla
\bu(t)\|_{L^\infty(\Omega)}$ using Gronwall's estimate. 
The objective of the present contribution is to show that the large
scale velocity that drives the exponential growth can be taken over a
much larger set than $\{\bu,\hat \bu\}$.
Indeed we will introduce a scale separation property defining a set of
fine and coarse scales.
The coarse scales are dependent on the
Smagorsinsky solution implicitly
through the difference between the Navier-Stokes' solution and the
Smagorinsky solution. Under the scale separation assumption, the exponential growth of perturbations will only
depend on the coarse scales. Any exact
solution may be decomposed in a large scale component $\bar \bu$ and a
fine scale fluctuation $\bu'$ so that
\begin{equation}\label{eq:scale_separation}
\begin{cases}
\bu = \bar \bu + \bu', 
\\
\bar \bu \in
[L^1(I;W^{1,\infty}(\Omega))]^d,
\\
\bu' \in \{
[L^{3}(Q)]^d\, | \, \int_Q ((\mu + \hat \nu(\bldeta))^\frac12 -|\bu'| \tau_{L}^{\frac12}) \phi \ge
                                0, \; \forall \phi \in
                                L^{\frac32}(Q), \phi\ge 0\}.
\end{cases}
\end{equation}
where $\tau_L$ is a characteristic time scale of the large scales of
the flow defined by
\begin{equation}\label{eq:char_time}
\tau_{L} := (\alpha(T) + T^{-1})^{-1}, \mbox{ with } \alpha(T) := T^{-1} \int_I \|\nabla \bar \bu(t)\|_{L^\infty(\Omega)} ~\mbox{d}t.
\end{equation}
For $\bu \in [L^1(I;W^{1,\infty}(\Omega))]^d$ the relation always holds for the trivial case
$\bu'=0$, leading to the classical exponential growth with coefficient
proportional to the full gradient of the fluid velocity. We say that the
flow is {\em scale separated} if the decomposition
\eqref{eq:scale_separation} exists with $\tau_L  \sim T$. 
\begin{remark}
Observe that the decomposition depends on
$\hat \nu(\bldeta)$ pointwise and that this quantity can be
very large for rough flows, since $\nabla \bu$, which is not regularized,
may be large locally without violating the a priori regularity assumption. This is a favorable property: as the flow gets more
rough the bound defining admissible small scales is relaxed and the
characteristic time of the resolved scales can increase, leading to
a reduction in the exponential growth of perturbations.
\end{remark}
The rationale for the decomposition is
that the constant in the estimates below will
have exponential growth depending only on $T/\tau_L$.
This shows that small amplitude, high frequency perturbations of 
scale separated flows will not grow exponentially, even if $\|\nabla \bu\|_{L^\infty(\Omega)}$ is large. The
following Lemma is a key tool to quantify this observation.

\begin{lemma}\label{lem:conv_term}
Let $\hat \bu \in \bV$ be the solution of \eqref{eq:NS_smag} with $\delta>0$ and let
$\bu \in \bV$ be the solution with $\delta=0$. Assume that $\bu$ satisfies the scale
separation property \eqref{eq:scale_separation}. Then, denoting
$\bldeta=\bu - \hat \bu$, there holds, for
all $\epsilon > 0$,
\[
 ((\bldeta\cdot \nabla ) \bu, \bldeta)_{Q} \leq   
 \frac{\epsilon}{ 2}\|\mu^{\frac12} \nabla \bldeta\|_{Q}^2
 + \frac{\epsilon}{ 2} \nu(\delta) \|\nabla
 \bldeta\|_{L^3(Q)}^{3} + \||(\epsilon^{-1} \tau_L^{-1}+|\nabla \bar \bu|_F)^{\frac12} \bldeta\|^2_{Q}.
\]
\end{lemma}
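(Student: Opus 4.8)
The plan is to exploit the scale separation decomposition $\bu = \bar\bu + \bu'$ from \eqref{eq:scale_separation} to split
\[
((\bldeta\cdot\nabla)\bu,\bldeta)_Q = ((\bldeta\cdot\nabla)\bar\bu,\bldeta)_Q + ((\bldeta\cdot\nabla)\bu',\bldeta)_Q,
\]
and to treat the two pieces by completely different mechanisms. The coarse piece is bounded directly: a pointwise Cauchy--Schwarz estimate gives $|(\bldeta\cdot\nabla)\bar\bu\cdot\bldeta|\le|\nabla\bar\bu|_F\,|\bldeta|^2$ a.e.\ in $Q$, whence
\[
((\bldeta\cdot\nabla)\bar\bu,\bldeta)_Q \le \int_Q|\nabla\bar\bu|_F\,|\bldeta|^2\,\mathrm{d}x\,\mathrm{d}t \le \|(\epsilon^{-1}\tau_L^{-1}+|\nabla\bar\bu|_F)^{\frac12}\bldeta\|_Q^2,
\]
using only $\epsilon^{-1}\tau_L^{-1}\ge0$; this already accounts for the last term on the right-hand side, and the assumption $\bar\bu\in L^1(I;W^{1,\infty}(\Omega))$ is exactly what makes it finite.

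For the fine piece, since $\nabla\cdot\bldeta=0$ and $\bldeta\in\bV_0$ either vanishes on $\partial\Omega$ or is periodic, integration by parts in space at each fixed time transfers the derivative off $\bu'$ (which carries only an $L^3(Q)$ bound, no gradient control) onto $\bldeta$ with no boundary contribution:
\[
((\bldeta\cdot\nabla)\bu',\bldeta)_Q = -((\bldeta\cdot\nabla)\bldeta,\bu')_Q \le \int_Q|\bu'|\,|\nabla\bldeta|_F\,|\bldeta|\,\mathrm{d}x\,\mathrm{d}t,
\]
the last step again by pointwise Cauchy--Schwarz. Reading the weak inequality in \eqref{eq:scale_separation} as the a.e.\ pointwise bound $|\bu'|\tau_L^{1/2}\le(\mu+\hat\nu(\bldeta))^{1/2}=(\mu+\nu(\delta)|\nabla\bldeta|_F)^{1/2}$ (valid since the test functions $\phi$ range over all nonnegative elements of $L^{3/2}(Q)$) and then $\sqrt{a+b}\le\sqrt a+\sqrt b$, one obtains $|\bu'|\le\tau_L^{-1/2}\mu^{1/2}+\tau_L^{-1/2}\nu(\delta)^{1/2}|\nabla\bldeta|_F^{1/2}$. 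Substituting splits the integral into a $\mu$-part and a Smagorinsky part; applying Young's inequality \eqref{eq:young} with exponents $2,2$ to the first, pairing $\mu^{1/2}|\nabla\bldeta|_F$ against $\tau_L^{-1/2}|\bldeta|$ with the Young parameter chosen so the gradient term carries weight $\epsilon/2$, gives $\tfrac\epsilon2\mu|\nabla\bldeta|_F^2+\tfrac1{2\epsilon}\tau_L^{-1}|\bldeta|^2$; the same step applied to the second, with $\nu(\delta)^{1/2}|\nabla\bldeta|_F^{3/2}$ in place of $\mu^{1/2}|\nabla\bldeta|_F$ (so its square is $\nu(\delta)|\nabla\bldeta|_F^3$), gives $\tfrac\epsilon2\nu(\delta)|\nabla\bldeta|_F^3+\tfrac1{2\epsilon}\tau_L^{-1}|\bldeta|^2$. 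Integrating over $Q$ and adding the coarse estimate, the two copies of $\tfrac1{2\epsilon}\tau_L^{-1}|\bldeta|^2$ combine with the $|\nabla\bar\bu|_F|\bldeta|^2$ term into $\int_Q(\epsilon^{-1}\tau_L^{-1}+|\nabla\bar\bu|_F)|\bldeta|^2$, which is the last term of the claim, while the remaining two pieces are $\tfrac\epsilon2\|\mu^{1/2}\nabla\bldeta\|_Q^2$ and $\tfrac\epsilon2\nu(\delta)\|\nabla\bldeta\|_{L^3(Q)}^3$ by \eqref{eq:Lpdef}.

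The only genuinely delicate point is the fine-piece argument. The integration by parts is essentially forced, since $\bu'$ is assumed only in $L^3(Q)$; and one must recognize that the a.e.\ bound on $|\bu'|$ furnished by the scale separation hypothesis is calibrated exactly right — the bound $(\mu+\nu(\delta)|\nabla\bldeta|_F)^{1/2}$ and nothing stronger is precisely what allows the two resulting contributions to be absorbed into $\tfrac\epsilon2\|\mu^{1/2}\nabla\bldeta\|_Q^2$ and $\tfrac\epsilon2\nu(\delta)\|\nabla\bldeta\|_{L^3(Q)}^3$ via Young's inequality. Everything else, including the bookkeeping of the $\epsilon$- and $\tau_L$-dependent constants, is routine.
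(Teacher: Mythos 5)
Your proposal is correct and follows essentially the same route as the paper: split via the scale separation, bound the coarse part pointwise by $|\nabla\bar\bu|_F|\bldeta|^2$, integrate by parts to avoid differentiating $\bu'$, read the weak inequality in \eqref{eq:scale_separation} as the a.e.\ bound $|\bu'|\tau_L^{1/2}\le(\mu+\hat\nu(\bldeta))^{1/2}$, and absorb via Young's inequality into the $\mu$- and $\nu(\delta)$-weighted gradient terms. The only (immaterial) differences are that the paper applies the divergence theorem to the full convective term before splitting, whereas you split first and integrate by parts only the fine piece, and that the paper applies Cauchy--Schwarz in $L^2(Q)$ followed by Young on the resulting norms rather than working pointwise.
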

\begin{proof}
Using the divergence theorem and the scale separation property we see that
\[
 ((\bldeta \cdot \nabla )\bu, \bldeta)_{Q} =  (\bar \bu + \bu', (\bldeta \cdot
 \nabla ) \bldeta)_{Q}  = ((\bldeta \cdot \nabla )\bar \bu, \bldeta)_{Q}+ (\bu', (\bldeta \cdot
 \nabla ) \bldeta)_{Q}.
\]
Using the bound of \eqref{eq:scale_separation} we see that, since by
assumption $ (\bldeta \cdot
 \nabla ) \bldeta \in [L^{\frac32}(Q)]^d$,
\[
|(\bu', (\bldeta \cdot
 \nabla ) \bldeta)_{Q}| \leq |(\tau_{L}^{-\frac12} |\bldeta|, (\mu+\hat
 \nu(\bldeta))^{\frac12}|
 \nabla  \bldeta|)_{Q}|.
\]
By the definition of $\hat\nu(\bldeta)$
\[
\|\hat\nu(\bldeta)^{\frac12} \nabla \bldeta\|_{Q} =
\left(\int_Q \nu(\delta) |\nabla \bldeta|_F^{3}
  ~\mbox{d}x \mbox{d}t\right)^{\frac12} = \| \nu(\delta)^{\frac13} \nabla \bldeta\|_{L^3(Q)}^{\frac32}.
\]
An application of the Cauchy-Schwarz inequality followed by the inequality \eqref{eq:young}, with $p=q=2$ then shows that
\begin{align*}
 |(\tau_{L}^{-\frac12} |\bldeta|, (\mu+\hat
 \nu(\bldeta))^{\frac12}
 \nabla  \bldeta|)_{Q}| 
 &\leq  \|\tau_L^{-\frac12}
 \bldeta\|_Q\|\hat\nu(\bldeta)^{\frac12} \nabla
 \bldeta\|_Q 
 \\
 &\leq  \frac{1}{ 2 \epsilon} \|\tau_L^{-\frac12}
 \bldeta\|_Q^2 +  \frac{\epsilon}{ 2} \|\mu^{\frac12} \nabla \bldeta\|^2_{L^2(Q)}+ \frac{\epsilon}{ 2} \|\nu(\delta)^{\frac13} \nabla \bldeta\|^3_{L^3(Q)}.
\end{align*}
Collecting the above inequalities we have
\begin{align*}
((\bldeta \cdot \nabla )\bu, \bldeta)_{Q} &\leq | ((\bldeta \cdot \nabla )\bar \bu, \bldeta)_{\Omega}| + ( |\bldeta|,|\bu'|
 |\nabla \bldeta|)_{Q} 
 \\
&\leq  (\bldeta |\nabla \bar \bu|, \bldeta)_{Q} +
  \|\tau_L^{-\frac12} \bldeta\|_{Q}
 (\|\hat\nu(\bldeta)^{\frac12} \nabla \bldeta\|_{Q} + \|\mu^{\frac12} \nabla \bldeta\|_{Q})
 \\
&\leq  (\bldeta |\nabla \bar \bu|, \bldeta)_{Q} +  \frac{1}{\epsilon}
\|\tau_L^{-\frac12} \bldeta\|_{Q}^2 + \frac{\epsilon}{ 2}\|\mu^{\frac12} \nabla \bldeta\|_{Q}^2+
 \frac{\epsilon}{ 2} \|\nu(\delta)^{\frac13} \nabla \bldeta\|^3_{L^3(Q)}.
\end{align*}
The result follows by noting that
\[
(\bldeta |\nabla \bar \bu|, \bldeta)_{Q} + \epsilon^{-1}\|\tau_L^{-\frac12}
\bldeta\|_{Q}^2 = \||(\epsilon \tau_L^{-1}+|\nabla \bar \bu|_F)^{\frac12} \bldeta\|^2_{Q}.
\]
\end{proof}

\section{Perturbation growth on the continuous level for scale
  separated flows}\label{sec:scale_pert}
We will first prove, using a perturbation argument on the continuous
equations, that the perturbation induced by the Smagorinsky term allows
for an $O(\nu(\delta)^{\frac12})$ error estimate in the $L^2$-norm between the Navier-Stokes' solution
and the Navier-Stokes-Smagorinsky solution. In case the solution is
smooth this bound can be improved to $O(\nu(\delta))$.
\begin{theorem}\label{thm:red_exp}
Let $\bu \in \bV$ be the solution to
\eqref{eq:NS_smag} with $\delta=0$ and let $\hat \bu$ be the solution of
\eqref{eq:NS_smag} with $\delta \ge 0$. Then there holds, with $\bldeta:=\bu -\hat \bu$
\[
\sup_{t \in I} \|\bldeta(t)\|^2_{\Omega}  +\|\mu^{\frac12} \nabla \bldeta\|_{Q}^2 + 
\frac{1}{6}\|\nu(\delta)^{\frac13} \nabla \bldeta\|_{L^3(Q)}^3
\leq  e^{14 \frac{T}{\tau_{L}}}  \frac83 \nu(\delta) \|\nabla
\bu\|_{L^3(Q)}^{3},
\]
where $\tau_{L}$ is defined by \eqref{eq:char_time}.
If in addition $\nabla \cdot(\hat \nu(\bu) \nabla) \bu \in [L^2(I;L^2(\Omega))]^d$
then there holds
\begin{equation*}
\sup_{t \in I} \|\bldeta(t)\|^2_{\Omega}  +\|\mu^{\frac12} \nabla \bldeta\|_{Q}^2 + 
\frac{1}{6} \|\nu(\delta)^{\frac13} \nabla \bldeta\|_{L^3(Q)}^3 
 \leq
e^{15\frac{T}{\tau_{L}}} \tau_{L} \nu(\delta)^2 \|\nabla \cdot(|\nabla \bu|\nabla \bu)\|_{Q}^{2}.
\end{equation*}
\end{theorem}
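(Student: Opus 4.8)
The plan is to derive both estimates from a single energy identity obtained by testing the perturbation equation \eqref{eq:pert_smag} with $\bv = \bldeta$, integrating in time over $[0,t]$, and then applying a Gronwall argument. The key structural facts are: the convection terms combine as $(\hat\bu\cdot\nabla\bldeta,\bldeta)_\Omega + (\bldeta\cdot\nabla\bu,\bldeta)_\Omega = (\bldeta\cdot\nabla\bu,\bldeta)_\Omega$ (since $\hat\bu$ is divergence free and $\bldeta$ vanishes on the boundary or is periodic), the viscous term gives $\|\mu^{\frac12}\nabla\bldeta\|^2_Q$, and the Smagorinsky difference term $(\hat\nu(\bu)\nabla\bu - \hat\nu(\hat\bu)\nabla\hat\bu, \nabla\bldeta)_Q$ is bounded below using the monotonicity inequality \eqref{eq:monotone_L3}, which yields $\tfrac14\nu(\delta)\|\nabla\bldeta\|^3_{L^3(Q)}$. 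So after testing we get
\[
\tfrac12\|\bldeta(t)\|^2_\Omega + \|\mu^{\frac12}\nabla\bldeta\|^2_{Q_t} + \tfrac14\nu(\delta)\|\nabla\bldeta\|^3_{L^3(Q_t)} \leq \big((\bldeta\cdot\nabla)\bu,\bldeta\big)_{Q_t} + \text{RHS},
\]
where $Q_t = \Omega\times[0,t]$ and RHS $= (\hat\nu(\bu)\nabla\bu,\nabla\bldeta)_{Q_t}$ is the consistency term, and $\|\bldeta(0)\|=0$.

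Next I would control the convection term $((\bldeta\cdot\nabla)\bu,\bldeta)_{Q_t}$ using Lemma \ref{lem:conv_term} with a small $\epsilon$ (say $\epsilon = \tfrac14$ or similar), so that the $\|\mu^{\frac12}\nabla\bldeta\|^2$ and $\nu(\delta)\|\nabla\bldeta\|^3_{L^3}$ contributions can be absorbed into the left-hand side, leaving behind a term of the form $\||(\epsilon^{-1}\tau_L^{-1}+|\nabla\bar\bu|_F)^{\frac12}\bldeta\|^2_{Q_t}$. This term is pointwise bounded by $(\epsilon^{-1}\tau_L^{-1} + \|\nabla\bar\bu(t)\|_\infty)\|\bldeta(t)\|^2_\Omega$ integrated over $[0,t]$, and by the definition \eqref{eq:char_time} of $\tau_L$ this is $\lesssim \tau_L^{-1}\int_0^t \|\bldeta\|^2_\Omega\,ds$ (absorbing the $T^{-1}$ and $\alpha(T)$ pieces into $\tau_L^{-1}$ up to the explicit constant). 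For the consistency term RHS, for the first estimate I would write $(\hat\nu(\bu)\nabla\bu,\nabla\bldeta)_{Q_t} = (\nu(\delta)^{\frac13}|\nabla\bu|^{\frac13}\cdot\nu(\delta)^{\frac23}|\nabla\bu|^{\frac23}\nabla\bu/|\nabla\bu|^{?}, \ldots)$ — more cleanly, use Hölder in the form $L^{3/2}$–$L^3$: $|(\hat\nu(\bu)\nabla\bu,\nabla\bldeta)| \leq \nu(\delta)\|\nabla\bu\|^2_{L^3}\|\nabla\bldeta\|_{L^3}$ on each time slice, then Young with exponents $(3/2,3)$ to split off a small multiple of $\nu(\delta)\|\nabla\bldeta\|^3_{L^3}$ to absorb, leaving $C\nu(\delta)\|\nabla\bu\|^3_{L^3(Q_t)}$. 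Tracking the constants carefully (this is where the $\tfrac14$ from monotonicity, the $\tfrac12$ from the time derivative, and the various $\epsilon$ choices interact) should yield the coefficient $14$ in the exponent and $\tfrac83$ in front after Gronwall.

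For the second, sharper estimate, the only change is in how the consistency term is handled: instead of pairing $\hat\nu(\bu)\nabla\bu$ with $\nabla\bldeta$, I would integrate by parts to write $(\hat\nu(\bu)\nabla\bu,\nabla\bldeta)_{Q_t} = -(\nabla\cdot(\hat\nu(\bu)\nabla\bu),\bldeta)_{Q_t} = -\nu(\delta)(\nabla\cdot(|\nabla\bu|\nabla\bu),\bldeta)_{Q_t}$, which is legitimate under the extra regularity hypothesis $\nabla\cdot(\hat\nu(\bu)\nabla\bu)\in L^2(I;L^2(\Omega))$ and the boundary/periodicity conditions. Then Cauchy–Schwarz and Young (exponents $2,2$, with weight chosen as $\tau_L^{\pm 1}$) give $\leq \tfrac12\tau_L^{-1}\int_0^t\|\bldeta\|^2_\Omega\,ds + \tfrac12\tau_L\nu(\delta)^2\|\nabla\cdot(|\nabla\bu|\nabla\bu)\|^2_{Q_t}$; the first piece merges with the Gronwall term (bumping the exponent from $14$ to $15$) and the second is the new data term, giving the stated bound $e^{15T/\tau_L}\tau_L\nu(\delta)^2\|\nabla\cdot(|\nabla\bu|\nabla\bu)\|^2_Q$ after Gronwall and taking the supremum over $t\in I$.

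I expect the main obstacle to be bookkeeping rather than conceptual: choosing the splitting parameters $\epsilon$ in Lemma \ref{lem:conv_term} and in the Young inequalities so that \emph{all} the $\|\mu^{\frac12}\nabla\bldeta\|^2_Q$ and $\nu(\delta)\|\nabla\bldeta\|^3_{L^3(Q)}$ contributions on the right are strictly dominated by their counterparts on the left, while leaving exactly a $\tfrac16$ coefficient on the $L^3$ term and a clean exponential constant; the nonlinear $L^3$ term makes this more delicate than the usual quadratic energy argument, since Young's inequality with exponent $3$ must be applied so that the residual $\nu(\delta)\|\nabla\bu\|^3_{L^3}$ (resp. $\tau_L\nu(\delta)^2\|\ldots\|^2$) carries no hidden negative power of $\nu(\delta)$ or $\mu$. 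A secondary technical point is justifying that $(\bldeta\cdot\nabla)\bldeta\in [L^{3/2}(Q)]^d$ so that Lemma \ref{lem:conv_term} applies — this follows from $\bldeta\in\bV$ via Hölder ($L^3\times L^3 \to L^{3/2}$) and is already implicitly used there.
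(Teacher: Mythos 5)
Your proposal follows essentially the same route as the paper's proof: testing \eqref{eq:pert_smag} with $\bldeta$, using skew-symmetry and the monotonicity \eqref{eq:monotone_L3} to retain $\tfrac14\nu(\delta)\|\nabla\bldeta\|^3_{L^3}$, absorbing the convection term via Lemma \ref{lem:conv_term} (the paper takes $\epsilon=1/6$), handling the consistency term by H\"older--Young with exponents $(3/2,3)$ for the first bound and by integration by parts with $\tau_L$-weighted Cauchy--Schwarz for the second, and concluding with Gronwall. The structure, the key lemmas invoked, and even the bookkeeping issues you flag (the origin of the $14$ versus $15$ in the exponent, the absence of negative powers of $\nu(\delta)$ in the residual) all match the paper's argument.
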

\begin{proof}
Taking $\bv = \bldeta$ in \eqref{eq:pert_smag} we see that, using the skew symmetry of the
convective term and the monotonicity of the p-Laplacian, Lemma \ref{eq:monotone},
we get
\begin{equation}\label{eq:pert_eq_start}
\frac12 \frac{d}{dt} \|\bldeta\|_{\Omega}^2 + ((\bldeta \cdot \nabla)
\bu, \bldeta)_{\Omega} + \|\mu^{\frac12}  \nabla \bldeta\|^2_{\Omega} + \frac14
\|\nu(\delta)^{\frac13} \nabla \bldeta (t)\|_{L^3(\Omega)}^3 \leq  (\hat \nu(\bu) \nabla \bu, \nabla \bldeta)_{\Omega}.
\end{equation}
We can bound
the right hand side using H\"olders inequality \eqref{eq:holder} and
Young's inequality \eqref{eq:young}, with $p=3/2$ and $q=3$,
\begin{align}\nonumber
 (\hat \nu(\bu) \nabla \bu, \nabla \bldeta)_{\Omega} &\leq \|\hat \nu(\bu)
 \nabla \bu\|_{L^{\frac32}(\Omega)} \|\nabla \bldeta\|_{L^{3}(\Omega)}
\\ \nonumber
& \leq \nu(\delta)^{\frac{2}{3}} \left(\frac14\right)^{-\frac13} \||\nabla \bu|_F
 \nabla \bu\|_{L^{\frac32}(\Omega)}  \left(\frac14\right)^{\frac13} \nu(\delta)^{\frac{1}{3}} \|\nabla
 \bldeta\|_{L^{3}(\Omega)}
 \\ \nonumber
&\leq \frac23 \left(\frac14\right)^{-\frac12} \nu(\delta) \|\nabla
\bu\|_{L^3(\Omega)}^{3}+\frac13 \left(\frac14\right) \nu(\delta) \|\nabla
 \bldeta\|^3_{L^{3}(\Omega)}
\\ \label{eq:consist_error1}
&\leq \frac43 \nu(\delta) \|\nabla
\bu\|_{L^3(\Omega)}^{3}+\frac{1}{12} c_p \|\nu(\delta)^{\frac13}\nabla
 \bldeta\|^3_{L^{3}(\Omega)}.
\end{align}
The second term in the right hand side is now absorbed by the fourth
term in the left hand side of \eqref{eq:pert_eq_start}. After
integration in time we obtain
\[
\frac12 \|\bldeta(T)\|_{\Omega}^2 + \|\mu^{\frac12}  \nabla
\bldeta\|^2_{Q} + \frac16 
\|\nu(\delta)^{\frac13}\nabla \bldeta (t)\|_{L^3(Q)}^3 \leq
\frac43 \nu(\delta)\|\nabla
\bu\|_{L^3(Q)}^{3} -   ((\bldeta \cdot \nabla)
\bu, \bldeta)_{Q}.
\]
Applying Lemma \ref{lem:conv_term} we proceed to bound the second term
in the right hand side which is the last term that does not have sign.
\begin{align*}
&\frac12 \|\bldeta(T)\|_{\Omega}^2 + \|\mu^{\frac12}  \nabla
\bldeta\|^2_{Q} + \frac16 
\|\nu(\delta)^{\frac13}\nabla \bldeta (t)\|_{L^3(Q)}^3 
\\
&\qquad \qquad \leq
\frac43 \delta^2 \|\nabla
\bu\|_{L^3(Q)}^{3} +  \|(\epsilon^{-1} \tau_L^{-1}+|\nabla \bar \bu|_F)^{\frac12} \bldeta\|_{Q}^2 +
 \frac{\epsilon}{ 2} \|\nu(\delta)^{\frac13} \nabla
 \bldeta\|^3_{L^3(Q)} +\frac{\epsilon}{ 2} \|\mu^{\frac12} \nabla
 \bldeta\|^2_{Q}.
\end{align*}
Taking $\epsilon = 1/6$ and multiplying through with $2$ we obtain the
bound
\begin{equation*}
\|\bldeta(T)\|_{\Omega}^2 + \|\mu^{\frac12}  \nabla
\bldeta\|^2_{Q} + \frac{1}{6}
\|\nu(\delta)^{\frac13}\nabla \bldeta (t)\|_{L^3(Q)}^3  \leq \frac83
\nu(\delta) \|\nabla \bu\|_{L^3(Q)}^{3}
+  2 \|(6 \tau_L^{-1}+|\nabla \bar \bu|_F)^{\frac12} \bldeta\|_{Q}^2.
\end{equation*}
Rewriting this as
\begin{align*}
&\|\bldeta(T)\|^2_{\Omega} +  \|\mu^{\frac12}  \nabla
\bldeta\|^2_{Q}~\mbox{d}t+ \frac{1}{6}  \|\nu(\delta)^{\frac13}\nabla
\bldeta\|^3_{L^{3}(Q)} 
\\
&\qquad \qquad \leq 2 \int_I (6 \tau_L^{-1}+\|\nabla \bar \bu(t)\|_{L^\infty(\Omega)})\|\bldeta(t)\|_{\Omega}^2
~\mbox{d}t + \frac83  \nu(\delta) \|\nabla
\bu\|_{L^3(Q)}^{3},
\end{align*}
we conclude by applying Gronwall's inequality, leading to
\[
\|\bldeta (T)\|^2_{\Omega} +  \|\mu^{\frac12}  \nabla
\bldeta\|^2_{Q} ~\mbox{d}t+\frac{1}{6} \|\nu(\delta)^{\frac13} \nabla \bldeta\|^3_{L^{3}(Q)} 
\leq e^{14 \frac{T}{\tau_{L}}}  \frac83\nu(\delta) \|\nabla
\bu\|_{L^3(Q)}^{3} ,
\]
where we used that $2 \int_I (6 \tau_L^{-1}+\|\nabla \bar
\bu\|_{\infty})~\mbox{d}t \leq 14 T/\tau_{L}$. 
This proves the first inequality. To prove the second observe that
proceeding by integration by parts we have instead of
\eqref{eq:consist_error1},
\begin{equation}\label{eq:consist_error2}
(\hat \nu(\bu) \nabla \bu, \nabla \bldeta)_{\Omega} \leq (\nabla
\cdot \nu(\bu) \nabla \bu, \bldeta)_{\Omega} \leq \tau_{L} \nu(\delta)^2
\|\nabla
\cdot |\nabla \bu| \nabla \bu\|_{\Omega}^2 + \tau_{L}^{-1}
\|\bldeta\|^2_{\Omega}.
\end{equation}
The conclusion once again follows using Gronwall's Lemma.
\end{proof}
\begin{remark}
Writing out the bound on $\bu'$ of \eqref{eq:scale_separation}
pointwise, with $\nu(\delta) = \delta^2$ and $\mu=0$, leads
to
\[
|\bu'|^2 \tau_{L} \leq \delta^2 |\nabla (\bu -\hat \bu)|_F.
\]
From this it follows that increasing $\delta$ must have one (or a
combination) of the
following consequences:
\begin{enumerate}
\item the error in $\nabla (\bu -\hat \bu)$ decreases, the
  scale separation stays the same;
\item the characteristic time $\tau_{L}$ increases and as a consequence the exponential growth
  decreases;
\item the small scale $|\bu'|$ increases; which implicitly allows for
  a decrease in $\bar \bu$ through the definition of the scale
  separation and a possible increase in $\tau_{L}$.  
\end{enumerate}
All these three possibilities point to the fact that either the error
is reduced by the increase of the parameter, or the exponential coefficient in the
estimate will decrease leading to a decrease in the exponential
growth. This effect can offset the effect of the increased consistency
introduced by increasing $\delta$. Another salient conclusion is that
if $\delta$ is coupled to the mesh size, there may be flow
configurations where the computational error grows under mesh
refinement. Although the consistency error decreases and numerical
resolution increases, the set of
non-essential fine scale decreases leading to increased exponential
growth of perturbations. This hints at a resolution barrier for
Smagorinsky LES
beyond which a full DNS is required to enhance accuracy further.
\end{remark}
The result of Theorem
\ref{thm:red_exp}
shows that the exponential growth of perturbations can be moderated by
the Smagorinsky term and hence the turbulence model indeed has a
stabilizing effect, which was the first objective of the present work.
\section{The Smagorinsky model as a numerical stabilizer}
In this section we will consider the situation where the
Smagorinsky term is not a physical model, but a stabilizing term in a
numerical method. We will consider a piecewise affine finite
element method that fits in a discrete de Rham
complex, see for instance \cite{Zhang08, GN18, CH18}. That means that the space has a divergence free subspace with
optimal approximation properties. It turns out that the
discrete de Rham complex and piecewise affine approximation are
exactly the properties that make the
Smagorinsky model a stabilizing term, with well balanced stability
versus accuracy. The affine approximation order makes the
consistency error of the nonlinear viscosity similar to the accuracy
of the approximation and the additional stability obtained through the
exact satisfaction of the divergence free condition reduces the need
of stabilization so that the second order Smagorinsky term is
sufficiently large to control fluctuations inside elements. To counter
instabilities due to the lack of $C^1$-continuity of the
approximation space a penalty term is added on a certain component of
the jump of the streamline derivative. Observe that this latter
stabilizing term echos the early approach to stabilization of
divergence free elements proposed in \cite{BL08}. In this work however no
improved convergence rate was obtained as a consequence of the
stabilization. \emph{To the best of our knowledge, the analysis below gives the first Reynolds number
robust $L^2$-error estimate with $O(h^{\frac32})$ convergence for a
piecewise affine $H^1$-conforming finite element method, satisfying the
incompressibility constraint exactly.} The analysis draws on recent
results using Galerkin-Least Squares
stabilization of the vorticity equation \cite{ABBGLM20}. The analysis
presented there however does not carry over to the piece affine case
considered herein. For the corresponding result
using $H_{div}$-conforming methods
we refer to \cite{BBG20}.


\subsection{Approximation space and technical results}
For the purposes of the
present paper it is sufficient to know that the space allows for a
divergence free subspace with optimal approximation properties in
$H^1$ and $L^2$. The below analysis also uses that the space is affine
to achieve the optimal error estimate. For higher order
spaces the weak consistency of the Smagorisky model is insufficient,
even if the flow is laminar. We let $\bV_h $ denote the velocity subspace of piecewise
affine vector functions in $[H^1(\Omega)]^d$ constructed such that it
satisfies a discrete inf-sup condition for the divergence free
constraint using some pressure space $Q_h$ such that $\nabla \cdot \bV_h \in Q_h$. Observe that by working in the divergence free space the
pressure can be eliminated in the analysis, which we will make use of
to reduce the technical detail below and we will therefore not discuss
$Q_h$ further but refer to \cite{CH18,GN18}. Below we will assume that
$\Omega \subset \mathbb{R}^3$ is convex, simply connected,
polyhedron. The two-dimensional case is also covered by the analysis,
but to reduce notation we omit it from the discussion.

The
 following classical inverse and trace inequalities are used
 frequently in the analysis:
\begin{itemize}
\item Inverse 
inequalities,
\begin{equation}\label{inverse}
|v|_{H^1(T)} \lesssim h^{-1}_T \|v\|_{L^2(T)}\quad \forall v \in \mathbb{P}_1(T).
\end{equation}
Here $\mathbb{P}_1(T)$ denotes the set of polynomials of
degree less than or equal to $1$ on the simplex $T$.
For $p\ge q\ge 1$, $l=0,1$, there holds
\begin{equation}\label{eq:Lpinv}
\|\bv\|_{W^{l,p}(\Omega)} \leq C h^{\frac{d}{p}-\frac{d}{q}}
\|\bv\|_{W^{l,q}(\Omega)}  \quad \forall \bv \in \bV_h.
\end{equation}
For a proof of \eqref{inverse} see \cite[Section 1.4.3]{DiPE12} and
for \eqref{eq:Lpinv} see
\cite[Corollary 1.141]{EG04}.
\item Trace inequalities (see \cite[Section 1.4.3]{DiPE12}),
\begin{equation}\label{trace_H1}
\|v\|_{L^2(\partial T)} \leq C \left( h_T^{-\frac12} \|v\|_{L^2(T)} +
h_T^{\frac12} \|v\|_{H^1(T)}\right)\quad \forall v \in H^1(T).
\end{equation}
\end{itemize}

\subsubsection{Approximation error estimates}
To simplify the analysis we introduce the divergence free space
with a homogeneous Dirichlet condition on the normal component,
\[
\bV_{0\bn} := \{\bv \in \bV:\, \nabla \cdot \bv = 0, \bv \cdot \bn =0
\mbox{ on } \partial \Omega \}.
\]
we also define the divergence free subspace of $\bV_h$
\[
\bV_{0h} := \bV_h \cap \bV_{0\bn}.
\]
We introduce the $L^2$-orthogonal projections $\pi_h:[L^2(\Omega)]^d\mapsto \bV_{0h}$
and $\bPi_h: \bL_0 \mapsto \bV_{0h}$. By the quasi
uniformity of the mesh the following bounds hold using standard finite element
approximation arguments
\begin{equation}\label{eq:L2proj1}
\|\bu - \pi_h \bu\|_{L^p(\Omega)} + h \|\nabla (\bu - \pi_h \bu)\|_{L^p(\Omega)}
\leq C h |\bu|_{W^{2,p}(\Omega)}, \quad p \ge 1, \quad \forall \bu \in
\bV_{0\bn} \cap [W^{2,p}(\Omega)]^d,
\end{equation}
and
\begin{equation}\label{eq:L2proj2}
\|\bu - \bPi_h \bu\|_\Omega + h \|\nabla (\bu - \bPi_h \bu)\|_\Omega+
h^{-\frac12} \|\bu - \bPi_h \bu\|_{\partial \Omega}
\leq C h |\bu|_{H^2(\Omega)}\quad \forall \bu \in \bV_{0\bn} \cap [H^2(\Omega)]^d.
\end{equation}
We also recall the $L^p$-stability of $\pi_h$, for $p \ge 1$ there
holds \cite{CT87},
\begin{equation}\label{eq:Lpstab}
\|\pi_h \bv\|_{L^p(\Omega)} \leq \| \bv\|_{L^p(\Omega)}  \quad \mbox{and
}  \quad \|\nabla \pi_h \bv\|_{L^p(\Omega)} \leq \|\nabla \bv\|_{L^p(\Omega)}.
\end{equation}
We will use the notation $\jump{\cdot}$ for the jump of a quantity
across an element boundary. In particular, we define
\begin{equation}\label{eq:jump_def}
\jump{\nabla v }:=\nabla v\vert_{T_1} \cdot\bn_1 + \nabla
v\vert_{T_2} \cdot \bn_2,\qquad  \jump{\bv \times \bn} := \bv\vert_{T_1} \times \bn_1 + \bv\vert_{T_2} \times \bn_2,
\end{equation}
to be the jump over the face $F:= \bar T_1 \cap \bar T_2$, where
$\bn_i$ is the outward pointing normal of the element $T$. The jump of
the gradient tensor $\jump{\nabla \bv }$ is defined by
applying the left inequality of \eqref{eq:jump_def} to each column vector. Observe
that it is an immediate consequence of \eqref{trace_H1} and
\eqref{eq:L2proj2} that for all $\bv \in [H^2(\Omega)]^d$,
\begin{equation}\label{eq:jump_bound}
\|h \jump{\nabla \bPi \bv}\|_{\mathcal{F}} \lesssim  h^{\frac32} |\bv|_{H^2(\Omega)}.
\end{equation}
To see this, note that
using the regularity of $\bv$ and a trace inequality \eqref{trace_H1} on each face $F$
shared by simplices $T_1$ and $T_2$, we have
\[
\|\jump{\nabla\bPi\bv}\|_{F} = \|\jump{\nabla(\bPi\bv - \bv)}\|_{F}
\leq C (h^{-\frac12} \|\nabla(\bPi\bv - \bv)\|_{T_1\cup T_2 }+
h^{\frac12} (|\bv|_{H^2(T_1)}+|\bv|_{H^2(T_2)})).
\]
 The following Lemma is an immediate consequence of \eqref{eq:Lpstab}.
\begin{lemma}\label{lem:lptol2}
Let $\bw = \bu + \bv_h$ with $\bu \in [W^{1,p}(\Omega)]^d$ and $\bv_h
\in \bV_h$. Then there holds for $p \ge 2$,
\[
\|\bw\|_{W^{s,p}(\Omega)} \leq \|\bu - \pi_h \bu\|_{W^{s,p}(\Omega)}
+ C h^{\frac{d(2-p)}{2p}} \|\bw\|_{H^s(\Omega)},\quad s=0,1.
\]
\end{lemma}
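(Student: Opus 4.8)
The plan is to use the $L^2$-orthogonal projection $\pi_h$ to peel off the finite-element part of $\bw$: write $\bw=(\bw-\pi_h\bw)+\pi_h\bw$ and estimate the two summands separately in $W^{s,p}(\Omega)$. The structural point is that $\pi_h$ reproduces the discrete component of $\bw$: since $\bv_h$ lies in the range $\bV_{0h}$ of $\pi_h$ (it is divergence free with vanishing normal trace, as in every application of the lemma), one has $\pi_h\bv_h=\bv_h$, and therefore
\[
\bw-\pi_h\bw=(\bu+\bv_h)-(\pi_h\bu+\bv_h)=\bu-\pi_h\bu .
\]
Hence $\|\bw-\pi_h\bw\|_{W^{s,p}(\Omega)}=\|\bu-\pi_h\bu\|_{W^{s,p}(\Omega)}$ with no loss of constant, and this costs no regularity of $\bu$ beyond $W^{1,p}(\Omega)$ since the term is left unexpanded.

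For the discrete summand, $\pi_h\bw\in\bV_{0h}\subset\bV_h$, so the inverse inequality \eqref{eq:Lpinv}, applied with $q=2$ and $l=s$, gives
\[
\|\pi_h\bw\|_{W^{s,p}(\Omega)}\le C\,h^{\frac{d}{p}-\frac{d}{2}}\,\|\pi_h\bw\|_{H^s(\Omega)}=C\,h^{\frac{d(2-p)}{2p}}\,\|\pi_h\bw\|_{H^s(\Omega)} .
\]
It then remains to replace $\|\pi_h\bw\|_{H^s(\Omega)}$ by $\|\bw\|_{H^s(\Omega)}$, which is exactly the $W^{s,2}$-stability of $\pi_h$: this is \eqref{eq:Lpstab} read with $p=2$ (the orthogonality of $\pi_h$ for $s=0$, the gradient bound in \eqref{eq:Lpstab} for $s=1$). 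Combining the two bounds through the triangle inequality $\|\bw\|_{W^{s,p}}\le\|\bw-\pi_h\bw\|_{W^{s,p}}+\|\pi_h\bw\|_{W^{s,p}}$ yields the claim.

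The only genuinely delicate point is the identity $\pi_h\bv_h=\bv_h$: it is what makes the approximation-error term appear with constant exactly one rather than with a factor that grows like $h^{\frac{d(2-p)}{2p}}$ as $h\to0$ when $p>2$. If one only knew $\bv_h\in\bV_h$, one would instead bound $\|\bv_h-\pi_h\bv_h\|_{W^{s,p}(\Omega)}$ using the $L^p$-stability \eqref{eq:Lpstab} and the inverse inequality \eqref{eq:Lpinv}, which still closes the argument but introduces a non-unit constant in front of $\|\bu-\pi_h\bu\|_{W^{s,p}(\Omega)}$. Everything else is a routine chain of the triangle inequality, \eqref{eq:Lpinv}, and \eqref{eq:Lpstab}.
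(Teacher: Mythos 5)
Your proof is correct and follows essentially the same route as the paper: split $\bw=(\bw-\pi_h\bw)+\pi_h\bw$, use $\pi_h\bv_h=\bv_h$ so that the first piece equals $\bu-\pi_h\bu$, and treat the discrete piece with the inverse inequality \eqref{eq:Lpinv} and the stability \eqref{eq:Lpstab}. Your remark on the need for $\bv_h$ to lie in the range of $\pi_h$ (i.e.\ in $\bV_{0h}$ rather than merely $\bV_h$) is a point the paper's one-line proof glosses over, and you resolve it correctly.
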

\begin{proof}
By the triangle inequality followed by an inverse inequality, \eqref{eq:Lpinv} and the stability of the
$L^2$-projection \eqref{eq:Lpstab}, we have
\begin{align*}
\|\bw\|_{W^{s,p}(\Omega)} &= \|\bw - \pi_h \bw\|_{W^{s,p}(\Omega)}
+\|\pi_h\bw\|_{W^{s,p}(\Omega)} 
\\
&\leq \|\bu - \pi_h \bu\|_{W^{s,p}(\Omega)}
+ C h^{\frac{d(2-p)}{2p}} \|\bw\|_{H^s(\Omega)},
\end{align*}
which completes the proof.
\end{proof}

We will now prove some estimates in weaker norms that will be helpful
in the analysis.
We recall the following surjectivity property of the curl
operator on simply connected polyhedral domains $\omega$. For all $\bv \in
\bV_{0\bn}$ there exists $\bvarphi\in \bV_0(\omega) \cap [H^1(\omega)]^d$ such
that $\nabla \times \bvarphi = \bv$ in $\omega$, $\bvarphi \times \bn_\omega = 0$
on $\partial \omega$ and satisfying the stability $\|\nabla
\bvarphi\|_\omega \lesssim \|\nabla \times
\bvarphi\|_\omega$. \cite[Theorem 3.17]{ABDG98}
Using the above properties we now introduce the regularized approximation error $\bE \in \bL_0(\Omega) \cap [H^1(\Omega)]^d$ defined by
\begin{alignat}{3}\label{eq:E1_Omega}
\nabla \times \bE &= \bv - \bPi_h \bv &\qquad & \mbox{in } \Omega,
\\
\nabla \cdot \bE & =0 &\qquad &  \mbox{in } \Omega, 
\\
\bn \times \bE & = 0 &\qquad & \mbox{on } \partial \Omega. \label{eq:E3_Omega}
\end{alignat}

\begin{lemma}\label{lem:weak_approx}
Let $\bE$ be defined by
\eqref{eq:E1_Omega}-\eqref{eq:E3_Omega} then
\begin{equation}\label{eq:E_approx1}
\|\bE\|_\Omega + \|h^{\frac12} \bE\|_{\mathcal{F}} + h \|\nabla \bE\|_\Omega  \lesssim h \|\bv - \bPi_h \bv\|_\Omega \lesssim h^{3} |\bv|_{H^2(\Omega)}.
\end{equation}
\end{lemma}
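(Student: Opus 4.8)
The plan is to establish the first inequality of \eqref{eq:E_approx1} by estimating the three terms on its left-hand side separately; the second inequality then follows immediately from the approximation bound \eqref{eq:L2proj2}. Write $\bw:=\bv-\bPi_h\bv$; by construction $\bw$ is divergence free with vanishing normal trace on $\partial\Omega$, and $(\bw,\bv_h)_\Omega=0$ for all $\bv_h\in\bV_{0h}$. For the gradient term I would simply invoke the regularity of the div--curl system: since $\bE$ satisfies $\nabla\times\bE=\bw$, $\nabla\cdot\bE=0$ in $\Omega$ and $\bn\times\bE=0$ on $\partial\Omega$, and $\Omega$ is a convex (simply connected) polyhedron, \cite{ABDG98} gives $\|\bE\|_{H^1(\Omega)}\lesssim\|\nabla\times\bE\|_\Omega+\|\nabla\cdot\bE\|_\Omega=\|\bw\|_\Omega$, hence $h\|\nabla\bE\|_\Omega\lesssim h\|\bw\|_\Omega$.

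The $L^2$-bound on $\bE$ is the main point, since it requires gaining one power of $h$ beyond the above. As $\bE$ is divergence free on the simply connected domain $\Omega$, it has a vector potential $\bphi\in\bL_0\cap[H^1(\Omega)]^d$ with $\nabla\times\bphi=\bE$, $\nabla\cdot\bphi=0$ and $\|\nabla\bphi\|_\Omega\lesssim\|\nabla\times\bphi\|_\Omega=\|\bE\|_\Omega$ (cf.\ the surjectivity property of the curl recalled before \eqref{eq:E1_Omega}, \cite{ABDG98}); since $\bphi\in\bL_0$ the projection $\bPi_h\bphi\in\bV_{0h}$ is defined. Integrating by parts, using that $\bn\times\bE=0$ kills the boundary contribution, and then subtracting $\bPi_h\bphi$ by $L^2$-orthogonality of $\bw$ to $\bV_{0h}$,
\[
\|\bE\|_\Omega^2=(\nabla\times\bphi,\bE)_\Omega=(\bphi,\nabla\times\bE)_\Omega=(\bphi,\bw)_\Omega=(\bphi-\bPi_h\bphi,\bw)_\Omega
\leq \|\bphi-\bPi_h\bphi\|_\Omega\,\|\bw\|_\Omega .
\]
Using the first order $L^2$-approximation of the divergence free projection, $\|\bphi-\bPi_h\bphi\|_\Omega\lesssim h\|\nabla\bphi\|_\Omega\lesssim h\|\bE\|_\Omega$, and dividing by $\|\bE\|_\Omega$, this gives $\|\bE\|_\Omega\lesssim h\|\bw\|_\Omega$.

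For the skeleton term I would apply the trace inequality \eqref{trace_H1} elementwise, square, sum over $\mathcal{T}$, and use quasi-uniformity and $\partial T\setminus\partial\Omega\subset\partial T$ to obtain $\|h^{\frac12}\bE\|_{\mathcal{F}}\lesssim\|\bE\|_\Omega+h\|\nabla\bE\|_\Omega$, which by the two previous bounds is $\lesssim h\|\bw\|_\Omega$. Collecting the three estimates proves the first inequality of \eqref{eq:E_approx1}, and composing with $\|\bw\|_\Omega=\|\bv-\bPi_h\bv\|_\Omega\lesssim h^2|\bv|_{H^2(\Omega)}$ from \eqref{eq:L2proj2} proves the second.

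The step I expect to be the real obstacle is the $L^2$-bound, i.e.\ the Aubin--Nitsche-type gain of a factor $h$: one must choose the gauge of the vector potential $\bphi$ (divergence free, plus a boundary normalization such as $\bphi\cdot\bn=0$) so that simultaneously $\bPi_h\bphi\in\bV_{0h}$ is meaningful, the stability $\|\nabla\bphi\|_\Omega\lesssim\|\bE\|_\Omega$ holds on the convex polyhedron, and the boundary term arising in the integration by parts cancels — this last point being precisely why the homogeneous tangential condition $\bn\times\bE=0$ is imposed in the definition \eqref{eq:E1_Omega}--\eqref{eq:E3_Omega}. The gradient and skeleton terms are then routine consequences of the div--curl regularity and the trace inequality.
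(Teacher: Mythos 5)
Your proposal is correct and follows essentially the same route as the paper: the gradient bound via the div--curl regularity on the convex simply connected polyhedron, the $L^2$-bound via a divergence-free vector potential ($\bphi$ playing the role of the paper's $\bPsi$), integration by parts with the boundary term killed by $\bn\times\bE=0$, the $L^2$-orthogonality of $\bv-\bPi_h\bv$ to $\bV_{0h}$ to insert $\bPi_h\bphi$, and first-order approximation plus the stability $\|\nabla\bphi\|_\Omega\lesssim\|\bE\|_\Omega$ to gain the extra power of $h$; the face term is handled by the elementwise trace inequality exactly as in the paper. No gaps.
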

\begin{proof}

By our assumptions on $\Omega$ there holds $\|\nabla \bE\|_\Omega
\lesssim \|\nabla \times \bE\|_\Omega$ and there exists $\bPsi \in
\bL_0(\Omega) \cap [H^1(\Omega)]^d$  and
$\bPsi \cdot \bn = 0$ on $\partial \Omega$ such that $\nabla \times \bPsi = \bE$, with
$\|\nabla \bPsi\|_\Omega \lesssim \|\nabla \times \bPsi\|_\Omega$  \cite[Theorem 3.12]{ABDG98}.

Using the $\bE$ and $\bPsi$ we may bound the error in the following way
\begin{align*}
\|\bE\|_\Omega^2 &= (\bE , \nabla \times \bPsi)_\Omega  
\\
&= (\bv - \bPi_h \bv
,\bPsi - \bPi_h \bPsi)_\Omega + \underbrace{(\bE, \bPsi \times
  \bn)_{\partial \Omega}}_{ = (\bE \times \bn, \bPsi)_{\partial \Omega}
  = 0}
\\
&\lesssim  h \|\nabla \bPsi\|_\Omega \|\bv - \bPi_h \bv \|_\Omega 
\\
&\lesssim h \|\bE\|_\Omega \|\bv - \bPi_h \bv \|_\Omega.
\end{align*}
We conclude that 
\begin{equation}\label{eq:bulk_E}
\|\bE\|_\Omega \lesssim  h \|\bv - \bPi_h \bv\|_\Omega.
\end{equation}
On each face we use the trace inequality \eqref{trace_H1}
\[
\| \bE\|_F \lesssim h^{-\frac12} \|\bE\|_T + h^{\frac12} \|\nabla \bE\|_{T}.
\]
It follows that
\begin{equation}\label{eq:face_E}
\|\bE\|_{\mathcal{F}}^2 \leq h^{-1} \| \bE\|^2_\Omega + h
|\bE|_{H^1(\Omega)}^2 \lesssim h  \|\bv - \bPi_h \bv\|_\Omega^2.
\end{equation}
Multiplying \eqref{eq:face_E} by $h$ and using approximation in the
right hand sides of \eqref{eq:bulk_E} and \eqref{eq:face_E}
leads to the desired inequality.
\end{proof}
\subsubsection{Vector identity}
For the analysis below the following elementary vector identity
\cite{BBG20} will
be useful. For two $3 \times 3$ matrices $A$ and $B$ with rows $A_i$
and $B_i$ ($i=1,2,3$) we define the vector quantity
 $\bc:=A \times B$ with the components $c_1= A_2 \cdot B_3-A_3 \cdot B_2$, $c_2= -(A_1
 \cdot B_3-A_3 \cdot B_1)$ $c_3= A_1 \cdot B_2-A_2 \cdot B_1$. A
 simple calculation then gives the following identity.  
\begin{lemma}\label{derivative-identity}
 For sufficiently smooth vectors $\bbeta$ and $\bv$ there holds
 \begin{equation*}
 \curl( \bbeta \cdot \nabla \bv)= \bbeta \cdot \nabla (\curl \bv)+ ((\nabla \bbeta)^t \times \nabla \bv) . 
 \end{equation*}
 \end{lemma}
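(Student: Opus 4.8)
The plan is to verify this pointwise algebraic identity componentwise in Cartesian coordinates, since it only involves first and second derivatives of $\bbeta$ and $\bv$ and no estimates are needed. First I would write $(\bbeta\cdot\nabla\bv)_m=\sum_j\beta_j\,\partial_j v_m$ and recall that the $k$-th component of the curl of a vector field $\bw$ is $[\curl\bw]_k=\sum_{l,m}\varepsilon_{klm}\,\partial_l w_m$, where $\varepsilon$ is the Levi--Civita symbol and the rows of $\nabla\bw$ are the gradients $\nabla w_i$ (the convention already implicit in $|\nabla\bu|_F=(\sum_i|\nabla u_i|^2)^{1/2}$). Applying this to $\bw=\bbeta\cdot\nabla\bv$ together with the Leibniz rule gives
\[
[\curl(\bbeta\cdot\nabla\bv)]_k=\sum_{j,l,m}\varepsilon_{klm}(\partial_l\beta_j)(\partial_j v_m)+\sum_{j,l,m}\varepsilon_{klm}\,\beta_j\,\partial_l\partial_j v_m,
\]
and the proof then reduces to identifying each of the two sums on the right.

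Next I would treat the two sums in turn. In the second sum the mixed partials of $\bv$ commute -- this is the only place the $C^2$ smoothness is used -- so $\partial_l\partial_j v_m=\partial_j\partial_l v_m$; summing over $l,m$ first then reassembles $\partial_j[\curl\bv]_k$, and hence this sum equals $\sum_j\beta_j\,\partial_j[\curl\bv]_k=[\bbeta\cdot\nabla(\curl\bv)]_k$, which is the first term on the right-hand side of the lemma. For the first sum I would invoke the definition of the $3\times3$ matrix cross product stated just before the lemma: for matrices $A,B$ with rows $A_i,B_i$ one has $(A\times B)_k=\sum_{l,m}\varepsilon_{klm}\,A_l\cdot B_m$. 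Taking $A=(\nabla\bbeta)^t$, whose $l$-th row has $j$-th entry $\partial_l\beta_j$, and $B=\nabla\bv$, whose $m$-th row has $j$-th entry $\partial_j v_m$, one obtains $[(\nabla\bbeta)^t\times\nabla\bv]_k=\sum_{l,m}\varepsilon_{klm}\sum_j(\partial_l\beta_j)(\partial_j v_m)$, which is exactly the first sum. Adding the two contributions yields the asserted identity.

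I expect the only delicate point -- and the one place a stray sign or transpose could creep in -- to be the index bookkeeping in the last step: one has to make sure that in $(\nabla\bbeta)^t\times\nabla\bv$ the index contracted inside the dot products $A_l\cdot B_m$ is simultaneously the differentiation index of $\bbeta$ and of $\bv$, while the two free indices carried by $\varepsilon_{klm}$ are the component index of $\bbeta$ (produced by the transpose) and the component index of $\bv$. A quick check on a monomial such as $\bbeta=(x_2,0,0)$, $\bv=(x_1,0,0)$ -- for which both sides equal $(0,0,-1)$ -- pins down the placement of the indices and confirms the sign, after which the general componentwise computation above is purely routine.
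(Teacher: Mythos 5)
Your componentwise Levi--Civita computation is correct: the product rule splits $\curl(\bbeta\cdot\nabla\bv)$ into the commuting-second-derivatives term, which reassembles into $\bbeta\cdot\nabla(\curl\bv)$, and the first-order term, which matches the paper's row-wise definition of the matrix cross product applied to $A=(\nabla\bbeta)^t$ and $B=\nabla\bv$. The paper offers no more than ``a simple calculation then gives the following identity,'' and your argument is precisely that calculation carried out in detail, so it is essentially the same approach.
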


\subsection{A linear model problem}
Since the approach to stabilization in the present work is non-standard we first consider the linear 
model problem for inviscid flow introduced in \cite{BBG20}. The Smagorinsky bulk term is not strictly necessary
for the analysis
in the linear case. Nevertheless we keep this term, since in this
simplified context the stabilizing mechanisms become clear. The model problem takes the form, find a velocity $\bu$ and a pressure $p$ satisfying
\begin{subequations}\label{pde}
\begin{alignat}{2}
\nabla \cdot  (\bu \otimes \bbeta) + \sigma \bu+ \nabla p = &\bff \quad  && \text{ in }  \Omega, \\
\nabla \cdot \bu=&0  \quad  && \text{ in } \Omega,  \\
\bu \cdot \bn = & 0   \quad && \text{ on } \partial \Omega.
\end{alignat}
\end{subequations}
We think of $\bu$ and $\bbeta$ as column vectors and we set $\bu
\otimes \bbeta = \bu \bbeta^t$. We assume that $\bbeta \in
[C^{1}(\bar \Omega)]^d \cap \bV_{0\bn}$ and $\sigma\in
\mathbb{R}_+$. To ensure uniqueness we assume $\sigma$
sufficiently big compared to $\|\nabla \bbeta\|_{\infty}$, for details
see \cite{BBG20}.

The numerical method we will analyze here reads: Find $\bu_h \in \bV_{0h}$ such that
\begin{subequations}\label{linearfem}
\begin{alignat}{2}
-(\bu_h, \bbeta \cdot \nabla \bv_h)_\Omega+(\sigma \bu_h,
\bv_h)_\Omega +\gamma s(\bu_h,\bv_h)=&\, (\bff,\bv_h)_\Omega \quad && \forall \bv_h \in \bV_{0h}
\end{alignat}
\end{subequations}
where $\gamma>0$ is a dimensionless parameter.
We define the stabilizing operator as a combination of a face penalty
operator and an artificial viscosity term in the bulk. In the fully
nonlinear case the bulk viscosity will be replaced by the Smagorinsky model,
\[
 s(\bu_h,\bv_h):= (\delta^2 |\nabla \bbeta|_F \nabla \bu_h,\nabla
 \bv_h)_\Omega+ ( h^{2}|\bbeta|^{-1}
 \jump{(\bbeta \cdot \nabla )\bu_h\times \bn} ,
 \jump{(\bbeta \cdot \nabla) \bv_h \times \bn})_{\mathcal{F}}.
\]
We define the stabilization semi-norm by $|\bv|_s :=
s(\bv,\bv)^{\frac12}$ and note that the following approximation
estimate holds
\begin{lemma}\label{lem:stab_approx}
Let $\bv \in [H^2(\Omega)]^d$ and $\bbeta \in [C^{1}(\bar \Omega)]^d$ then
\[
|\bv - \bPi \bv|_s  \lesssim (\delta \|\nabla
\bbeta\|_{\infty}^{\frac12}+ h^{\frac12} \|
\bbeta\|^{\frac12}_{\infty})
h |\bv|_{H^2(\Omega)}.
\]
If $\|\nabla
\bbeta\|_{\infty}\lesssim h^{-1} \|
\bbeta\|_{\infty}$, $\gamma = O(1)$ and $\delta = O(h)$ then
\[
|\bv - \bPi \bv|_s  \lesssim C \|
\bbeta\|^{\frac12}_{\infty}
h^{\frac32} |\bv|_{H^2(\Omega)}.
\]
\end{lemma}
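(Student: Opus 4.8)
The plan is to estimate $|\bv - \bPi\bv|_s^2$ by splitting it into its two defining contributions — the bulk Smagorinsky term $(\delta^2 |\nabla \bbeta|_F \nabla(\bv - \bPi\bv),\nabla(\bv-\bPi\bv))_\Omega$ and the streamline-jump penalty term $\|h|\bbeta|^{-1/2}\jump{(\bbeta\cdot\nabla)(\bv-\bPi\bv)\times\bn}\|_{\mathcal F}^2$ — bounding each separately and then combining via $\sqrt{a^2+b^2}\le a+b$. The bulk term is immediate: pull $|\nabla\bbeta|_F$ out in $L^\infty$ and use the $H^1$-approximation bound from \eqref{eq:L2proj2}, $\|\nabla(\bv-\bPi\bv)\|_\Omega \lesssim h\,|\bv|_{H^2(\Omega)}$, which gives a contribution controlled by $\delta\|\nabla\bbeta\|_\infty^{1/2} h\,|\bv|_{H^2(\Omega)}$.

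For the penalty term the point requiring care is the weight $|\bbeta|^{-1}$, which I would handle using the streamline structure of the jump together with $\bbeta\in[C^1(\bar\Omega)]^d$. Pointwise on a face $F$ one has $|\jump{(\bbeta\cdot\nabla)(\bv-\bPi\bv)\times\bn}| \le |\bbeta|\,\bigl(|\nabla(\bv-\bPi\bv)|_{T_1}|_F + |\nabla(\bv-\bPi\bv)|_{T_2}|_F\bigr)$, so $|\bbeta|^{-1}|\jump{\cdots}|^2 \le |\bbeta|\,(\cdots)^2 \le \|\bbeta\|_\infty(\cdots)^2$ and the singular weight is tamed. It then remains to bound $h^2\|\bbeta\|_\infty \sum_{T}\|\nabla(\bv-\bPi\bv)\|_{\partial T}^2$, which is exactly the computation behind \eqref{eq:jump_bound}: the trace inequality \eqref{trace_H1}, the fact that $\nabla^2\bPi\bv = 0$ since $\bPi\bv$ is piecewise affine, and \eqref{eq:L2proj2} give $\sum_T\|\nabla(\bv-\bPi\bv)\|_{\partial T}^2 \lesssim h^{-1}\|\nabla(\bv-\bPi\bv)\|_\Omega^2 + h|\bv|_{H^2(\Omega)}^2 \lesssim h|\bv|_{H^2(\Omega)}^2$. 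Hence the penalty contribution is bounded by $h^{1/2}\|\bbeta\|_\infty^{1/2}\,h\,|\bv|_{H^2(\Omega)}$, and adding the two contributions yields the first estimate.

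The second estimate follows by substituting the hypotheses into the first: under $\|\nabla\bbeta\|_\infty \lesssim h^{-1}\|\bbeta\|_\infty$ and $\delta = O(h)$ one has $\delta\|\nabla\bbeta\|_\infty^{1/2} \lesssim h\cdot h^{-1/2}\|\bbeta\|_\infty^{1/2} = h^{1/2}\|\bbeta\|_\infty^{1/2}$, so both terms in the prefactor collapse to $h^{1/2}\|\bbeta\|_\infty^{1/2}$ and the bound becomes $\lesssim \|\bbeta\|_\infty^{1/2} h^{3/2}|\bv|_{H^2(\Omega)}$; the assumption $\gamma = O(1)$ plays no role in the seminorm itself and is only recorded so that the weighted quantity $\gamma|\cdot|_s^2$ appearing in the scheme stays comparable to $|\cdot|_s^2$ downstream. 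The whole argument is routine — the \emph{only} genuinely delicate step is the bookkeeping that absorbs the $|\bbeta|^{-1}$ weight in the face term, and even that is standard once one exploits that the jumped quantity carries an explicit factor of the streamline field $\bbeta$.
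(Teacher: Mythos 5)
Your proof is correct and follows essentially the same route as the paper: bound the bulk term by pulling $|\nabla\bbeta|_F$ out in $L^\infty$ and using \eqref{eq:L2proj2}, and tame the $|\bbeta|^{-1}$ weight in the face term by the pointwise factor $|\bbeta|$ in the jump, reducing it to $\|h\jump{\nabla\bPi\bv}\|_{\mathcal F}\lesssim h^{3/2}|\bv|_{H^2(\Omega)}$ via \eqref{eq:jump_bound}. The substitution argument for the second estimate matches the paper's as well.
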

\begin{proof}
First note that for the linearized Smagorinsky term there holds by \eqref{eq:L2proj2}
\[
\|\delta |\nabla \bbeta|_F^{\frac12} \nabla (\bv - \bPi \bv)\|_\Omega \leq
\delta \|\nabla \bbeta\|_\infty^{\frac12} \|\nabla (\bv - \bPi \bv)\|_\Omega
\leq \delta \|\nabla \bbeta\|_\infty^{\frac12} h |\bv|_{H^2(\Omega)}.
\]
For the term on the faces on the other hand we have
\[
\gamma^{\frac12}\||\bbeta|^{-\frac12} h \jump{(\bbeta \cdot \nabla
  )\bPi\bv\times \bn}\|_{\mathcal{F}} \leq \gamma^{\frac12} h
\|\bbeta\|^{\frac12}_\infty \|\jump{\nabla\bPi\bv}\|_{\mathcal{F}} .
\]
The first claim now follows by summing over all faces, using
\eqref{eq:jump_bound}. The second claim is immediate from the first claim and the assumptions.
\end{proof}
Clearly by taking $\bv_h = \bu_h$ in
\eqref{linearfem}, integrating by
parts and using the properties of $\bbeta$ and a Cauchy-Schwarz
inequality we have the a priori
estimate
\begin{equation}\label{linear_apriori}
\|\sigma^{\frac12} \bu_h\|_\Omega \leq \sigma^{-\frac12} \|\bff\|_\Omega.
\end{equation}
Thanks to equation ~\eqref{linear_apriori} the problem \eqref{linearfem} has a unique solution. Moreover, the
method~\eqref{linearfem}  is weakly consistent; in fact,  for $(\bu,p)
\in [L^2(\Omega)]^d \times L^2_0(\Omega)$ solving \eqref{pde} we have,
using that $(p,\nabla \cdot  \bv_h)_\Omega= 0$ for all $\bv_h \in \bV_{0h}$,
\begin{subequations}\label{weak}
\begin{alignat}{2}
-(\bu, \bbeta \cdot \nabla \bv_h)_\Omega+(\sigma \bu, \bv_h)_\Omega
=&\, (\bff,\bv_h)_\Omega\quad && \forall \bv_h \in \bV_{0h}.
\end{alignat}
\end{subequations}
Following the ideas of \cite{BBG20} we now prove an a priori error estimate
in the $L^2$-norm for the finite element solution to
\eqref{linearfem}.
\begin{proposition}\label{prop:lin_error_est}
Let $\bu \in \bL_0 \cap[H^2(\Omega)]^d$ be the solution to
\eqref{pde},  and $\bu_h \in \bV_{0h}$ the solution to
\eqref{linearfem}. Also assume that $\|\nabla
\bbeta\|_{\infty} \lesssim \delta^{-1} \|
\bbeta\|_{\infty}$, $\gamma = O(1)$ and $\delta = O(h)$. Then there holds
\[
\sigma^{\frac12} \|\bu - \bu_h\|_\Omega + |\bu- \bu_h|_s
\lesssim h^{\frac32} (\|\bbeta\|^{\frac12}_{\infty}
+ \sigma^{\frac12} h^{\frac12}) |\bu|_{H^2(\Omega)} + \sigma^{-1} h^{\frac32} \||\nabla
\bbeta| \nabla \bu\|_{H^1(\Omega)}.
\]
\end{proposition}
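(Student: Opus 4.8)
The plan is to follow the structure of \cite{BBG20}: split the error with the $L^2$-projection, $\bu - \bu_h = (\bu - \bPi_h \bu) + \bldeta_h$ with $\bldeta_h := \bPi_h \bu - \bu_h \in \bV_{0h}$, and reduce the estimate to a bound on $\bldeta_h$, since the interpolation errors $\|\bu - \bPi_h\bu\|_\Omega$ and $|\bu - \bPi_h\bu|_s$ are already controlled by \eqref{eq:L2proj2} and Lemma \ref{lem:stab_approx}. Subtracting the weak consistency identity \eqref{weak} from \eqref{linearfem} gives the error equation $-(\bu - \bu_h, \bbeta\cdot\nabla\bv_h)_\Omega + \sigma(\bu - \bu_h, \bv_h)_\Omega = \gamma s(\bu_h,\bv_h)$ for all $\bv_h \in \bV_{0h}$. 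Testing with $\bv_h = \bldeta_h$, using that $(\bldeta_h,\bbeta\cdot\nabla\bldeta_h)_\Omega = 0$ (integration by parts, $\nabla\cdot\bbeta = 0$, $\bbeta\cdot\bn = 0$), writing $\bu - \bu_h = (\bu - \bPi_h\bu) + \bldeta_h$, and decomposing $\gamma s(\bu_h,\bldeta_h) = -\gamma|\bldeta_h|_s^2 - \gamma s(\bu - \bPi_h\bu,\bldeta_h) + \gamma s(\bu,\bldeta_h)$ (from $\bu_h = \bPi_h\bu - \bldeta_h$), one arrives at the identity
\begin{align*}
\sigma\|\bldeta_h\|_\Omega^2 + \gamma|\bldeta_h|_s^2 = {}& (\bu - \bPi_h\bu,\bbeta\cdot\nabla\bldeta_h)_\Omega - \sigma(\bu - \bPi_h\bu,\bldeta_h)_\Omega \\
& {}- \gamma s(\bu - \bPi_h\bu,\bldeta_h) + \gamma s(\bu,\bldeta_h).
\end{align*}
The rest is a term-by-term estimate of the right-hand side, each piece being either an interpolation error or the genuine consistency error of the linearized Smagorinsky term.

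The decisive term is the convective one, $(\bu - \bPi_h\bu, \bbeta\cdot\nabla\bldeta_h)_\Omega$: a direct Cauchy--Schwarz estimate followed by the inverse inequality \eqref{eq:Lpinv} on $\nabla\bldeta_h$ would cost a power of $h$ and, after Young's inequality against $\sigma\|\bldeta_h\|_\Omega^2$, an inverse power of $\sigma$, which is too lossy. Instead I exploit that $\bu - \bPi_h\bu$ is divergence free: by Lemma \ref{lem:weak_approx}, applied with $\bv = \bu$, we may write $\bu - \bPi_h\bu = \nabla\times\bE$ with $\bn\times\bE = 0$ on $\partial\Omega$ and $\|\bE\|_\Omega + h^{-\frac12}\|\bE\|_{\mathcal{F}} \lesssim h^3|\bu|_{H^2(\Omega)}$. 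Integrating $(\nabla\times\bE, \bbeta\cdot\nabla\bldeta_h)$ by parts element by element (the boundary contribution vanishing because $\bn\times\bE = 0$) and invoking the vector identity of Lemma \ref{derivative-identity}, the volume part becomes $(\bE, \bbeta\cdot\nabla(\curl\bldeta_h))_{\mathcal{T}} + (\bE, (\nabla\bbeta)^t\times\nabla\bldeta_h)_\Omega$; since $\bldeta_h$ is piecewise affine, $\curl\bldeta_h$ is piecewise constant and the first term vanishes, while the inter-element contributions collapse, via the scalar triple product, to $(\bE, \jump{(\bbeta\cdot\nabla)\bldeta_h\times\bn})_{\mathcal{F}}$. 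The volume term is then estimated by splitting $|\nabla\bbeta|$ as $|\nabla\bbeta|^{\frac12}\cdot|\nabla\bbeta|^{\frac12}$, absorbing $\||\nabla\bbeta|^{\frac12}\nabla\bldeta_h\|_\Omega \le \delta^{-1}|\bldeta_h|_s$ into the stabilization seminorm and using $\|\nabla\bbeta\|_\infty \lesssim \delta^{-1}\|\bbeta\|_\infty$, $\|\bE\|_\Omega \lesssim h^3|\bu|_{H^2(\Omega)}$ and $\delta = O(h)$; the skeleton term is bounded by $\|h^{-1}|\bbeta|^{\frac12}\bE\|_{\mathcal{F}}$ times the jump part of $|\bldeta_h|_s$, hence by $\|\bbeta\|_\infty^{\frac12}h^{\frac32}|\bu|_{H^2(\Omega)}$ times $|\bldeta_h|_s$ using $\|\bE\|_{\mathcal{F}} \lesssim h^{\frac52}|\bu|_{H^2(\Omega)}$. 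After Young's inequality \eqref{eq:young} these produce the $h^{\frac32}\|\bbeta\|^{\frac12}_\infty|\bu|_{H^2(\Omega)}$ contribution.

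For the remaining terms: $\sigma(\bu - \bPi_h\bu,\bldeta_h)_\Omega \le \frac14\sigma\|\bldeta_h\|_\Omega^2 + \sigma\|\bu - \bPi_h\bu\|_\Omega^2$ yields $\sigma^{\frac12}h^2|\bu|_{H^2(\Omega)}$ by \eqref{eq:L2proj2}; $\gamma s(\bu - \bPi_h\bu,\bldeta_h) \le \frac{\gamma}{4}|\bldeta_h|_s^2 + \gamma|\bu - \bPi_h\bu|_s^2$ yields $h^{\frac32}\|\bbeta\|^{\frac12}_\infty|\bu|_{H^2(\Omega)}$ by Lemma \ref{lem:stab_approx}; and the genuine inconsistency term $\gamma s(\bu,\bldeta_h)$ has vanishing face part (since $\jump{(\bbeta\cdot\nabla)\bu\times\bn} = 0$ for $\bu\in[H^2(\Omega)]^d$), so it reduces to $\gamma(\delta^2|\nabla\bbeta|\nabla\bu,\nabla\bldeta_h)_\Omega$, which after integration by parts is controlled in terms of $\delta^2\|\nabla\cdot(|\nabla\bbeta|\nabla\bu)\|_\Omega$ and absorbed into $\sigma\|\bldeta_h\|^2_\Omega$ by Young's inequality, producing the term proportional to $\sigma^{-1}h^{\frac32}\||\nabla\bbeta|\nabla\bu\|_{H^1(\Omega)}$. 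Choosing the Young parameters small enough to absorb all $\bldeta_h$-dependent pieces into the left-hand side, taking square roots, and adding the interpolation errors $\|\bu - \bPi_h\bu\|_\Omega$ and $|\bu - \bPi_h\bu|_s$ via the triangle inequality finishes the proof.

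The main obstacle is precisely the convective term $(\bu - \bPi_h\bu, \bbeta\cdot\nabla\bldeta_h)_\Omega$: gaining the extra half power of $h$ and avoiding any negative power of $\sigma$ hinges entirely on representing the divergence-free interpolation error through the potential $\bE$ and on the fact that, for piecewise affine $\bldeta_h$, the commutator identity of Lemma \ref{derivative-identity} leaves only terms that are either weighted by $\nabla\bbeta$ (hence absorbable in the Smagorinsky part of $|\bldeta_h|_s$ under the hypothesis $\|\nabla\bbeta\|_\infty \lesssim \delta^{-1}\|\bbeta\|_\infty$) or supported on the skeleton (hence absorbable in the jump-penalty part). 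A secondary technical point is the boundary term generated when integrating $s(\bu,\bldeta_h)$ by parts, which is handled either through the boundary/periodicity convention adopted for \eqref{pde} or by a trace estimate on $\bldeta_h$.
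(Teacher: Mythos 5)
Your proposal is correct and follows the same essential mechanism as the paper's proof: the divergence\--free interpolation error is represented through the potential $\bE$ of Lemma \ref{lem:weak_approx}, the convective term is integrated by parts elementwise, Lemma \ref{derivative-identity} splits the resulting curl into a piece absorbed by the linearized Smagorinsky part of $|\cdot|_s$ (using $\|\nabla\bbeta\|_\infty\lesssim\delta^{-1}\|\bbeta\|_\infty$) and a skeleton piece absorbed by the jump penalty, and the genuine inconsistency $s(\bu,\cdot)$ is integrated by parts and hidden in $\sigma\|\cdot\|_\Omega^2$. The one organizational difference is the error split: the paper works directly with $\bldeta=\bu-\bu_h$ and uses Galerkin orthogonality to shift everything onto $\bu-\bPi\bu$, whereas you first peel off the discrete error $\bldeta_h=\bPi_h\bu-\bu_h$ and estimate that. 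Your variant buys a small simplification --- since $\bldeta_h$ is genuinely piecewise affine, the term $\bbeta\cdot\nabla(\curl\bldeta_h)$ vanishes identically, while the paper must retain and estimate $(\bbeta\cdot\nabla(\nabla\times\bu),\bE)_\Omega$ (their \eqref{eq:conv1}), which is harmless but requires one more use of $\|\bE\|_\Omega\lesssim h^3|\bu|_{H^2(\Omega)}$ --- at the cost of an extra triangle inequality at the end. Both routes land on the same bound; the only step you leave schematic, the boundary term $(\delta^2|\nabla\bbeta|\nabla\bu\cdot\bn,\bldeta_h)_{\partial\Omega}$ produced when integrating $s(\bu,\bldeta_h)$ by parts, is handled in the paper exactly as you indicate, by a global trace estimate on $|\nabla\bbeta|\nabla\bu$ combined with a local trace plus inverse inequality on the discrete error.
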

\begin{proof}
Let $\bldeta= \bu-\bu_h$ and observe that since by definition $\nabla \cdot \bbeta = 0$ and $\bbeta \cdot \bn =
0$ on $\partial \Omega$
\[
\|\sigma^{\frac12} \bldeta\|^2_\Omega + |\bldeta|_s^2=-(\bldeta, \bbeta
\cdot \nabla \bldeta)_\Omega+(\sigma \bldeta, \bldeta)_\Omega+ s(\bldeta,\bldeta).
\]
We can then
use \eqref{linearfem} and \eqref{weak} and the
divergence theorem to
obtain
\begin{align}\label{eq:lin_pert}
\|\sigma^{\frac12} \bldeta\|^2_\Omega + \gamma |\bldeta|_s^2 
&=(\bbeta \cdot \nabla\bldeta, \bu -
\bPi \bu)_\Omega+(\sigma \bldeta, \bu -
\bPi \bu)_\Omega 
\\ \nonumber
&\qquad 
+\gamma  s(\bu,\bldeta)- \gamma s(\bu,\bu -
\bPi \bu)+\gamma s(\bldeta,\bu -\bPi \bu).
\end{align}
For the second term on the right hand side we have 
\begin{equation}\label{eq:reac_term}
(\sigma \bldeta, \bu -
\bPi \bu)_\Omega \leq \|\sigma^{\frac12}(\bu -
\bPi \bu)\|^2_\Omega,
\end{equation}
and for the last three terms
\begin{align*}
& s(\bu,\bldeta)-s(\bu,\bu -
\bPi \bu)+s(\bldeta,\bu -\bPi \bu) 
\\
&\qquad  = -(\delta^2 \nabla \cdot |\nabla
\bbeta| \nabla \bu, \bldeta - \bu -
\bPi \bu) + (\delta^2 |\nabla
\bbeta| \nabla \bu\cdot \bn, \bldeta - \bu -
\bPi \bu)_{\partial \Omega} +s(\bldeta,\bu -\bPi \bu).
\end{align*}
It follows using the Cauchy-Schwarz inequality and the
arithmetic-geometric inequality that
\begin{equation}\label{eq:smag_term}
 -(\delta^2 \nabla \cdot |\nabla
\bbeta| \nabla \bu, \bldeta - \bu -
\bPi \bu) \leq 2 \sigma^{-1}  \|\delta^2 \nabla \cdot |\nabla
\bbeta| \nabla \bu\|_\Omega^2 + \frac14 \sigma \|\bldeta\|^2_\Omega +
\frac14 \sigma \| \bu -
\bPi \bu\|^2_\Omega,
\end{equation}
and
\begin{equation}\label{eq:stab_term}
s(\bldeta,\bu -\bPi \bu) \leq \frac14 |\bldeta|_s^2 + |\bu - \bPi
\bu|_s^2.
\end{equation}
For the boundary term we use the Cauchy-Schwarz inequality followed by
a global and local trace inequality and approximation to obtain
\begin{multline*}
(\delta^2 |\nabla
\bbeta| \nabla \bu\cdot \bn, \bldeta - \bu -
\bPi \bu)_{\partial \Omega} \leq C  h^{\frac32} \||\nabla
\bbeta| \nabla \bu\|_{H^1(\Omega)} h^{\frac12}(\| \bldeta\|_{\partial \Omega} + \|\bu -
\bPi \bu\|_{\partial \Omega}) \\ 
\leq C \sigma^{-1} h^{3} \||\nabla
\bbeta| \nabla \bu\|_{H^1(\Omega)}^2 + \sigma\|\bu -
\bPi \bu\|^2_{\Omega} +\sigma h^2 \|\nabla (\bu -
\bPi \bu)\|^2_{\Omega}+ \frac{\sigma}{4} \| \bldeta\|_{\Omega}^2.
\end{multline*}
For the second inequality we used that by \eqref{trace_H1}, $\|\bu -
\bPi \bu\|_{\partial \Omega} \lesssim h^{-\frac12} \|\bu -
\bPi \bu\|_{\Omega} + h^{\frac12} \|\nabla \bu -
\bPi \bu\|_{\Omega}$ and using also \eqref{inverse} it is
straightforward to prove that
\begin{align*}
h^{\frac12}(\| \bldeta\|_{\partial \Omega} + \|\bu -
\bPi \bu\|_{\partial \Omega} )
&\leq C h^{\frac12} \underbrace{\| \bldeta - \bPi
\bldeta\|_{\partial \Omega} }_{= \|\bu -
\bPi \bu\|_{\partial \Omega}} + C \|
\bPi \bldeta\|_{\Omega} +  \|\bu -
\bPi \bu\|_{\Omega} + h  \|\nabla (\bu -
\bPi \bu)\|_{\Omega} 
\\
&\leq C\|\bu -
\bPi \bu\|_{\Omega} + C h  \|\nabla (\bu -
\bPi \bu)\|_{\Omega}  +  C \|
\bldeta\|_{\Omega} .
\end{align*}
It only remains to bound the convective term. To this end we use the
regularized error defined by \eqref{eq:E1_Omega}-\eqref{eq:E3_Omega}
and write
\begin{align}\label{eq:conv_pert}
(\bbeta \cdot \nabla\bldeta, \bu -
\bPi \bu)_\Omega &= (\bbeta \cdot \nabla\bldeta, \nabla \times \bE)_\Omega
\\ \nonumber
&= (\nabla \times (\bbeta \cdot \nabla \bldeta), \bE)_{\mathcal{T}}+ \frac12
\sum_T \int_{\partial T}  (\jump{\bbeta \cdot \nabla \bu_h}\times \bn) \cdot \bE \times
\bn ~\mbox{d}s.
\end{align}
Using now  that the finite element functions are affine per element
and Lemma \ref{derivative-identity} we have
\begin{equation}\label{eq:cross_conv}
(\nabla \times (\bbeta \cdot \nabla \bldeta), \bE)_{\mathcal{T}} = (\bbeta
\cdot \nabla (\nabla \times\bu), \bE)_\Omega+(\nabla \bbeta)^t\times \nabla \bldeta, \bE)_\Omega.
\end{equation}
For the first term of the right hand side we see that
\begin{equation}\label{eq:conv1}
(\bbeta
\cdot \nabla (\nabla \times\bu), \bE)_\Omega \leq \|(\bbeta
\cdot \nabla (\nabla \times\bu)\|_\Omega \|\bE\|_\Omega.
\end{equation}
The second term of the right hand side of \eqref{eq:cross_conv} is bounded in the following way, using the
Cauchy-Schwarz inequality, Young's inequality and Lemma \ref{lem:weak_approx}
\begin{equation}\label{eq:conv2}
((\nabla \bbeta)^t\times \nabla \bldeta, \bE)_\Omega \leq \|\nabla
\bbeta\|^{\frac12}_{\infty} \|\delta |\nabla
\bbeta|_F^{\frac12} \nabla \bldeta\|_\Omega \|\delta^{-1} \bE\|_\Omega
\leq \frac14 |\bldeta|_s^2+ \|\nabla \bbeta\|_{\infty}\|\delta^{-1} \bE\|_\Omega^2
\end{equation}
Finally for the last term of the right hand side of
\eqref{eq:conv_pert} we see that 
\begin{align}\label{eq:conv3}
\sum_T \int_{\partial T} (\jump{\bbeta \cdot \nabla \bu_h\times \bn}) \cdot \bE \times
\bn ~\mbox{d}s &\leq |\bldeta|_s
\gamma^{-\frac12}\|\bbeta\|_{\infty}^{\frac12}h^{-1} \|
\bE\|_{\mathcal{F}} 
\\ \nonumber
&\leq
\frac14 |\bldeta|_s^2 + \gamma^{-1}\|\bbeta\|_{\infty}h^{-3} \|
h^{\frac12} \bE\|_{\mathcal{F}}^2.
\end{align}
Applying the bounds of the equations \eqref{eq:reac_term}-\eqref{eq:conv3} to the
right hand side of \eqref{eq:lin_pert} leads to
\begin{align}\label{eq:lin_approx_bound}
\frac12 \|\sigma^{\frac12} \bldeta\|^2_\Omega + \frac12 |\bldeta|_s^2
&\lesssim \|\sigma^{\frac12}(\bu -
\bPi \bu)\|^2_\Omega +  |\bu - \bPi
\bu|_s^2
\\ \nonumber
&+ C \sigma^{-1} h^{3} \||\nabla
\bbeta| \nabla \bu\|_{H^1(\Omega)}^2+\sigma h^2 \|\nabla (\bu -
\bPi \bu)\|^2_{\Omega}  
\\ \nonumber
&+\|(\bbeta
\cdot \nabla (\nabla \times\bu)\|_\Omega \|\bE\|_\Omega+ \|\nabla \bbeta\|_{\infty}\|\delta^{-1} \bE\|_\Omega^2+\gamma^{-1}\|\bbeta\|_{\infty}h^{-3} \|
h^{\frac12} \bE\|_{\mathcal{F}}^2.
\end{align}
We finish the proof by applying \eqref{eq:L2proj2}, Lemma
\ref{lem:stab_approx} and Lemma \ref{lem:weak_approx} in the right
hand side of \eqref{eq:lin_approx_bound} and the assumption on $\bbeta$.
\end{proof}
\begin{remark}
In the case of the linear model problem the addition of the
linearized Smagorinsky term is not strictly necessary to obtain the
order $h^{\frac32}$ in the estimate of Proposition
\ref{prop:lin_error_est}. Indeed the bound of equation
\eqref{eq:conv2} can be modified as follows to obtain the convergence
with $\delta=0$.
\begin{equation}\label{eq:conv2_mod}
((\nabla \bbeta)^t\times \nabla \bldeta, \bE)_\Omega \leq \|\nabla
\bbeta\|_\infty \|h \nabla \bldeta\|_\Omega \|h^{-1} \bE\|_\Omega.
\end{equation}
Adding and subtracting $\pi_h \bldeta$, using the
triangle inequality and the inverse inequality \eqref{inverse} gives
\[
\|h \nabla \bldeta\|_\Omega \leq \|h \nabla ( \bldeta - \pi_h
\bldeta)\|_\Omega+ \|\pi_h \bldeta\|_\Omega.
\]
Then by approximation and stability of the $L^2$-projection,
\[
\|h \nabla \bldeta\|_\Omega \leq C h^2 |\bu|_{H^2(\Omega)}+ \|\bldeta\|_\Omega.
\]
Using this in \eqref{eq:conv2_mod} followed by an arithmetic-geometric
inequality leads to
\[
((\nabla \bbeta)^t\times \nabla \bldeta, \bE)_\Omega \leq  \frac14 \sigma \|\bldeta\|_\Omega^2 + C h^4
\|\nabla
\bbeta\|_\infty |\bu|_{H^2(\Omega)}^2 + \|\nabla
\bbeta\|_\infty  (C + \sigma^{-1}\|\nabla
\bbeta\|_\infty) \|h^{-1}\bE\|_\Omega^2. 
\]
The first term in the right hand side can now be absorbed by the
zero'th order term in the left hand side and the remaining terms are
of order $O(h^4)$, by Lemma \ref{lem:weak_approx}, which shows that they are sufficiently small by a
margin of $O(h)$. In the nonlinear case below, $\bbeta$ will be
replaced by the discrete solution and it appears no longer to be
possible to balance the estimate to optimal order without the
nonlinear stabilization. So independent of the improved stability
through scale separation, the nonlinear stabilization is of interest
for the finite element discretization of the Navier-Stokes' equations.
\end{remark}
\section{Stabilized finite element method for the incompressible Navier-Stokes' equations}
The finite element space semi-discretization
for the approximation of \eqref{eq:NS_smag} reads: find $
\bhu_h \in \bV_{0h}$, with $\bhu_h(0) = \bPi \bu(0)$ such that
\begin{align}\label{eq:FEM}
(\partial_t \bhu_h, \bv_h)_\Omega &+ ((\bhu_h \cdot \nabla) \bhu_h,
\bv_h)_\Omega + ((\hat \nu(\bhu_h)  + \mu) \nabla \bhu_h,\nabla
\bv_h)_\Omega 
\\ \nonumber
&\qquad \qquad 
+ bc(\bhu_h, \bv_h)+ s(\bhu_h,\bv_h)
= ({\bf f}, \bv_h)_\Omega \qquad 
 \forall  \bv_h \in \bV_{0h}.
\end{align}
Here the term $bc$ is a consistency term added due to the fact that
only the normal component of the velocity is set to zero in $\bV_{0h}
$ defined by
\[
bc(\bhu_h, \bv_h):= -(\mu \nabla \bhu_h \bn ,
t \bv_h )_{\partial \Omega}-(\mu \nabla \bv_h \bn ,
t \bhu_h )_{\partial \Omega},
\]
where $t = I - \bn \otimes \bn$ is the projection onto the tangential
plane of the boundary of $\Omega$. The stabilization term $s$ is defined by
\begin{equation*}
 s(\bhu_h,\bv_h):= s_0(\bhu_h,\bv_h) + s_1(\bhu_h,\bv_h)
\end{equation*}
with
\[
s_0(\bhu_h,\bv_h) := \gamma_0  ( h^{2} (|\bhu_h|+U)^{-1}
 \jump{(\bhu_h\cdot \nabla )\bhu_h\times \bn} ,
 \jump{(\bhu_h\cdot \nabla) \bv_h \times \bn})_{\mathcal{F}}  
\]
and
\[
s_1(\bhu_h,\bv_h):= \gamma_1 ( \max(\mu h^{-1},U) t \bhu_h ,
t \bv_h)_{\partial \Omega}.
\]
The
parameters $\gamma_i$, $i=0,1$ are dimensionless, positive, real numbers and $U = O(1)$
is some characteristic velocity of the flow. 

 The following Lemma is useful for
the analysis in the presence of $bc$. We give the proof of this
results in appendix.
\begin{lemma}\label{lem:bc_stab}
For all $\bv_h \in V_{0h}$ there holds
\[
bc(\bv_h, \bv_h) \leq C  \gamma_1^{-1} \|\mu^{\frac12} \nabla
\bv_h\|^2_{\Omega}+ \frac14 \gamma_1\max(\mu h^{-1},U) \|t \bv_h\|^2_{\partial
  \Omega}.
\]
For all $\bv \in \bH^2(\Omega) + \bV_{0h}$ there holds
\[
bc(\bv, \bv_h) \leq  C\gamma_1^{-1} \|\mu^{\frac12} \nabla
\bv\|^2_{\Omega} + C h^2 \mu \|\bv
\|^2_{\bH^2(\mathcal{T})}  + \frac14 \gamma_1 \max(\mu h^{-1},U) \|t \bv_h\|^2_{\partial
  \Omega}.
\]
\end{lemma}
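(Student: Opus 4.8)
The plan is to estimate the boundary bilinear form $bc$ by moving all derivatives off the troublesome factor using trace and inverse inequalities, and then splitting the resulting products with Young's inequality so that one piece is absorbed by the viscous term $\|\mu^{1/2}\nabla\cdot\|_\Omega^2$ and the other by the boundary stabilization $\max(\mu h^{-1},U)\|t\,\cdot\|_{\partial\Omega}^2$. For the first inequality, I would start from
\[
bc(\bv_h,\bv_h) = -2(\mu\nabla\bv_h\,\bn,\;t\bv_h)_{\partial\Omega}
\le 2\|\mu^{\frac12}\nabla\bv_h\|_{\partial\Omega}\,\|\mu^{\frac12}t\bv_h\|_{\partial\Omega},
\]
then apply the trace inequality \eqref{trace_H1} together with the inverse inequality \eqref{inverse} to the first factor, giving $\|\nabla\bv_h\|_{\partial\Omega}\lesssim h^{-1/2}\|\nabla\bv_h\|_\Omega$ for $\bv_h\in\bV_h$ (since $\bv_h$ is piecewise affine, the $H^2$ term in \eqref{trace_H1} vanishes). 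For the second factor I would write $\mu^{1/2}=(\mu h^{-1})^{1/2}h^{1/2}\le \max(\mu h^{-1},U)^{1/2}h^{1/2}$, so that $\|\mu^{\frac12}t\bv_h\|_{\partial\Omega}\lesssim h^{1/2}\max(\mu h^{-1},U)^{1/2}\|t\bv_h\|_{\partial\Omega}$. The two half-powers of $h$ cancel, leaving a product of $\|\mu^{\frac12}\nabla\bv_h\|_\Omega$ with $\max(\mu h^{-1},U)^{1/2}\|t\bv_h\|_{\partial\Omega}$, which Young's inequality \eqref{eq:young} with weights tuned so the second term carries the factor $\tfrac14\gamma_1$ converts into the claimed bound, with the $\gamma_1^{-1}$ appearing as the conjugate weight.

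For the second inequality, with $\bv = \bv_c + \bv_h$ where $\bv_c\in[H^2(\Omega)]^d$ (or simply $\bv\in H^2(\mathcal T)+\bV_{0h}$), I would split $bc(\bv,\bv_h)$ into its two constituent terms. The term $-(\mu\nabla\bv_h\,\bn,\;t\bv)_{\partial\Omega}$ is treated exactly as above, with $\|\mu^{\frac12}\nabla\bv_h\|_\Omega$ controlled by the viscous norm and the $\bv$-factor absorbed into the boundary stabilization of $\bv_h$ after adding and subtracting $t\bv_h$ (so the genuine boundary contribution of $\bv$ against itself produces the $h^2\mu\|\bv\|_{H^2(\mathcal T)}^2$ term via \eqref{trace_H1} applied to $\bv-\bv_h$, or more directly via the approximation estimate). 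The term $-(\mu\nabla\bv\,\bn,\;t\bv_h)_{\partial\Omega}$ is the one where the derivative sits on the rough function: here I would use the full trace inequality \eqref{trace_H1} on $\nabla\bv$, giving $\|\mu^{\frac12}\nabla\bv\|_{\partial\Omega}\lesssim \mu^{1/2}(h^{-1/2}\|\nabla\bv\|_{\mathcal T}+h^{1/2}\|\bv\|_{H^2(\mathcal T)})$, and then pair the $h^{-1/2}$ piece against the $h^{1/2}$ gained from $\mu^{\frac12}t\bv_h$ as before and the $h^{1/2}$ piece against the same, producing the $\|\mu^{\frac12}\nabla\bv\|_\Omega^2$ contribution (with weight $\gamma_1^{-1}$) and the $h^2\mu\|\bv\|_{H^2(\mathcal T)}^2$ contribution, while everything multiplying $t\bv_h$ is absorbed with weight $\tfrac14\gamma_1$ into $\max(\mu h^{-1},U)\|t\bv_h\|_{\partial\Omega}^2$.

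The only mildly delicate point — and the place I would be careful — is the bookkeeping of the powers of $h$ and $\mu$ so that $\max(\mu h^{-1},U)$ emerges as the natural scale on the boundary term in both cases: one must consistently bound $\mu\le h\max(\mu h^{-1},U)$ and, where the inverse inequality is invoked, restrict to $\bv_h\in\bV_h$ so that the affine structure kills the second-derivative term in \eqref{trace_H1}. No single step is a genuine obstacle; the lemma is essentially a careful application of trace/inverse inequalities plus Young's inequality, which is presumably why the authors relegate the proof to an appendix.
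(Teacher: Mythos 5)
Your proof of the first inequality is, step for step, the paper's proof: Cauchy--Schwarz, Young's inequality \eqref{eq:young} weighted by $\gamma_1$, the observation $\mu\le h\max(\mu h^{-1},U)$, and the trace inequality \eqref{trace_H1} combined with the inverse inequality \eqref{inverse} to convert $h\|\mu^{\frac12}\nabla\bv_h\|^2_{\partial\Omega}$ into $\|\mu^{\frac12}\nabla\bv_h\|^2_{\Omega}$. Nothing to add there.

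For the second inequality the paper uses a different decomposition from yours: it writes $bc(\bv,\bv_h)=bc(\pi_h\bv,\bv_h)+bc(\bv-\pi_h\bv,\bv_h)$, reuses the discrete estimate together with the $H^1$-stability \eqref{eq:Lpstab} of $\pi_h$ on the first piece, and uses approximation of $\bv-\pi_h\bv$ on the second piece to generate the $h^2\mu\|\bv\|^2_{\bH^2(\mathcal{T})}$ term. You instead split $bc$ into its two constituent boundary integrals and hit $\nabla\bv$ with the full trace inequality; that half of your plan is sound and produces the same two contributions. The step that does not survive scrutiny is your treatment of $-(\mu\nabla\bv_h\,\bn,\,t\bv)_{\partial\Omega}$ by ``adding and subtracting $t\bv_h$'': this leaves a term proportional to $\mu\|\nabla\bv_h\|_{\partial\Omega}\,\|t(\bv-\bv_h)\|_{\partial\Omega}$, and for the arbitrary pair $(\bv,\bv_h)$ of the lemma statement $\bv-\bv_h$ is not an approximation error, so nothing on the claimed right-hand side controls it (note in particular that $\|\nabla\bv_h\|_{\Omega}$ does not appear on the right of the second inequality, only $\|\nabla\bv\|_{\Omega}$). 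The repair is precisely the paper's choice of splitting: add and subtract $\pi_h\bv$ rather than $\bv_h$, so that the leftover tangential trace is $t(\bv-\pi_h\bv)$, an approximation error of the $H^2$ component of $\bv$ that is absorbed into $h^2\mu\|\bv\|^2_{\bH^2(\mathcal{T})}$ via \eqref{trace_H1} and \eqref{eq:L2proj1}, while $\|\nabla\pi_h\bv\|_{\Omega}\le\|\nabla\bv\|_{\Omega}$ keeps the bulk term in terms of $\bv$ alone. With that one substitution your argument closes and coincides in substance with the appendix proof.
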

The formulation
\eqref{eq:FEM} corresponds to a dynamical system. It admits a unique
solution as shown in the following propositions.
\begin{proposition}\label{prop:apriori}
For $\gamma_1$ large enough, a solution $\bhu_h $ to the system \eqref{eq:FEM}  satisfies the following
stability estimate for all $T>0$,
\begin{equation}
\sup_{t \in I} \|\bhu_h(t)\|_\Omega + \|\mu^{\frac12} \nabla
\bhu_h\|_{Q}+ \|\delta^{\frac23}\nabla
\bhu_h\|^{\frac32}_{L^3(Q)}+\left(\int_I s(\bhu_h,\bhu_h) ~\mbox{d}t\right)^{\frac12}
\lesssim \int_I \|{\bf f }\|_\Omega ~\mbox{d}t+\|\bhu_h(0) \|_\Omega,
\end{equation}
where the constant $C$ is independent of $T$.
\end{proposition}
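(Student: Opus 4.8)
The plan is to test the finite element equation \eqref{eq:FEM} with $\bv_h = \bhu_h$ and run a Gr\"onwall-free energy argument, since we expect a stability constant independent of $T$. First I would note that the convective term vanishes: by the skew-symmetry $((\bhu_h \cdot \nabla) \bhu_h, \bhu_h)_\Omega = 0$, which holds exactly here because $\bhu_h \in \bV_{0h}$ is genuinely divergence free and $\bhu_h \cdot \bn = 0$ on $\partial \Omega$ (this is precisely where the discrete de Rham / exactly-divergence-free property is used). The diffusive and Smagorinsky terms give $((\hat\nu(\bhu_h)+\mu)\nabla\bhu_h,\nabla\bhu_h)_\Omega = \|\mu^{\frac12}\nabla\bhu_h\|_\Omega^2 + \|\delta^{\frac23}\nabla\bhu_h\|_{L^3(\Omega)}^3$ using $\nu(\delta)=\delta^2$ and the definition \eqref{eq:Lpdef}. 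The stabilization term $s(\bhu_h,\bhu_h) = |\bhu_h|_s^2 \ge 0$ contributes with a good sign.

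The one term without an obvious sign is $bc(\bhu_h,\bhu_h)$. Here I would invoke the first inequality of Lemma \ref{lem:bc_stab}, which bounds $bc(\bhu_h,\bhu_h) \le C\gamma_1^{-1}\|\mu^{\frac12}\nabla\bhu_h\|_\Omega^2 + \tfrac14\gamma_1\max(\mu h^{-1},U)\|t\bhu_h\|_{\partial\Omega}^2$. The second piece is absorbed by $s_1(\bhu_h,\bhu_h) = \gamma_1\max(\mu h^{-1},U)\|t\bhu_h\|_{\partial\Omega}^2$ (losing at most a factor $\tfrac34$), and the first piece is absorbed by the viscous term $\|\mu^{\frac12}\nabla\bhu_h\|_\Omega^2$ \emph{provided $\gamma_1$ is large enough} — this is the origin of the hypothesis. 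After these absorptions we arrive at
\[
\tfrac12 \tfrac{d}{dt}\|\bhu_h\|_\Omega^2 + c_1\|\mu^{\frac12}\nabla\bhu_h\|_\Omega^2 + \|\delta^{\frac23}\nabla\bhu_h\|_{L^3(\Omega)}^3 + c_2\, s(\bhu_h,\bhu_h) \le \|\bff\|_\Omega\|\bhu_h\|_\Omega,
\]
with $c_1,c_2>0$ absolute constants. The right-hand side is handled by $\|\bff\|_\Omega\|\bhu_h\|_\Omega \le \|\bff\|_\Omega \sup_{s\le t}\|\bhu_h(s)\|_\Omega$, so dropping the nonnegative dissipation terms and integrating in time gives $\tfrac12\|\bhu_h(t)\|_\Omega^2 \le \tfrac12\|\bhu_h(0)\|_\Omega^2 + (\sup_{s\le t}\|\bhu_h(s)\|_\Omega)\int_I\|\bff\|_\Omega\,ds$; taking the supremum over $t$ and using the elementary fact that $a^2 \le b^2 + a c$ implies $a \le b + c$ (for $a,b,c\ge 0$) yields the $L^\infty(L^2)$ bound without any exponential factor. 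Reinserting this into the time-integrated inequality controls $\|\mu^{\frac12}\nabla\bhu_h\|_Q$, $\|\delta^{\frac23}\nabla\bhu_h\|_{L^3(Q)}^{3/2}$, and $(\int_I s(\bhu_h,\bhu_h)\,dt)^{1/2}$, all by the same data norm, with $T$-independent constants.

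The genuinely delicate point is \textbf{not} this energy estimate but the \emph{existence} of a solution to the dynamical system \eqref{eq:FEM}: one must argue that the nonlinear ODE system (a finite-dimensional system with a monotone but non-Lipschitz nonlinearity coming from the $p$-Laplacian-type Smagorinsky term, and a quadratic convection term) has a global-in-time solution. The standard route is a Picard/Cauchy–Peano local existence argument combined with the a priori bound just derived to rule out blow-up and extend to all of $I$; the monotonicity of the Smagorinsky operator (Lemma \ref{eq:monotone}) plus continuity (Lemma \ref{lem:cont}) gives the needed continuity of the vector field, and the a priori estimate provides the uniform bound that prevents finite-time escape. Uniqueness, if needed, follows from a Gr\"onwall argument on the difference of two solutions using monotonicity of the Smagorinsky term together with the local Lipschitz character of convection on the bounded set where the solutions live. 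I would state existence/uniqueness briefly and devote the bulk of the proof to the energy estimate above, since that is what the proposition explicitly quantifies.
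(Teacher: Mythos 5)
Your proposal is correct and follows essentially the same route as the paper: test with $\bv_h=\bhu_h$, kill the convective term by skew-symmetry, absorb $bc(\bhu_h,\bhu_h)$ via the first inequality of Lemma \ref{lem:bc_stab} into the viscous and $s_1$ terms (this is exactly where ``$\gamma_1$ large enough'' enters), integrate in time, take the supremum over the endpoint, and reinsert the resulting $L^\infty(L^2)$ bound to control the dissipation terms. The added discussion of global existence is not needed for this proposition (the paper defers it to the following one), but it does no harm.
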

\begin{proof}
Testing \eqref{eq:FEM} with $\bv_h = \bhu_h$ yields, for all $t \in I$,
\begin{equation}\label{eq:stab1}
(\partial_t \bhu_h, \bhu_h)_\Omega + \underbrace{((\bhu_h \cdot
  \nabla) \bhu_h, \bhu_h)_\Omega}_{=0} + ((\hat \nu(\bhu_h)  + \mu) \nabla \bhu_h,\nabla
\bhu_h)_\Omega + bc(\bhu_h,\bhu_h) + s(\bhu_h,\bhu_h)
= ({\bf f}, \bhu_h)_\Omega.
\end{equation}
Noting that the second term is zero by skew-symmetry we have after
integration on $(0,r)$ for all $r \in I$ and using Lemma \ref{lem:bc_stab}
\begin{multline}\label{eq:stab2}
\frac12 \|\bhu_h(r) \|_\Omega^2+\|\mu^{\frac12} \nabla
\bhu_h\|_{\Omega \times (0,r)}^2+\delta^2 \|\nabla
\bhu_h\|^{3}_{\Omega \times (0,r)} + \int_0^r s(\bhu_h,\bhu_h) ~\mbox{d}t
\\
\lesssim\int_0^r ({\bf f}, \bhu_h)_\Omega ~\mbox{d}t + \frac12 \|\bhu_h(0) \|_\Omega^2.
\end{multline}
Taking now the supremum over $r \in I$ in the first term of the left
hand side we see that
\[
\frac12\sup_{r \in I}  \|\bhu_h(r) \|_\Omega^2 \lesssim \sup_{r \in I}
\|\bhu_h(r) \|_\Omega (\int_I \|{\bf f }\|_\Omega ~\mbox{d}t + \frac12 \|\bhu_h(0) \|_\Omega)
\]
and therefore
\[
\sup_{r \in I}  \|\bhu_h(r) \|_\Omega \lesssim \int_I \|{\bf f} \|_\Omega ~\mbox{d}t+\|\bhu_h(0) \|_\Omega.
\]
Finally we see that
\begin{multline*}
\|\mu^{\frac12} \nabla
\bhu_h\|_{\Omega \times (0,r)}^2+ \delta^2\|\nabla
\bhu_h\|^{3}_{\Omega \times (0,r)}+\int_0^r s(\bhu_h,\bhu_h) ~\mbox{d}t 
\lesssim  \int_0^r ({\bf f}, \bhu_h)_\Omega ~\mbox{d}t + \frac12 \|\bhu_h(0) \|_\Omega^2 \\\leq \sup_{r \in I}
\|\bhu_h(r) \|_\Omega \left(\int_I \|{\bf f }\|_\Omega ~\mbox{d}t + \frac12
\|\bhu_h(0) \|_\Omega\right)
\lesssim \left(\int_I \|{\bf f }\|_\Omega ~\mbox{d}t + \|\bhu_h(0) \|_\Omega\right)^2
\end{multline*}
which finishes the proof.
\end{proof}

\begin{proposition}
The system \eqref{eq:FEM}  admits a
unique solution $\bhu_h \in \bV_{0h}$ on the interval $(0,T]$, $T>0$.
\end{proposition}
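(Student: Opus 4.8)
The plan is to recast \eqref{eq:FEM} as a system of ordinary differential equations on the finite-dimensional space $\bV_{0h}$ and then combine the Picard--Lindel\"of (Carath\'eodory) theorem with the a priori bound of Proposition \ref{prop:apriori}.

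First I would fix a basis $\{\bphi_i\}_{i=1}^N$ of $\bV_{0h}$ and write $\bhu_h(t)=\sum_i \xi_i(t)\bphi_i$. Testing \eqref{eq:FEM} successively against $\bphi_j$, $j=1,\dots,N$, turns the semi-discrete problem into $M\dot\bxi = G(t,\bxi)$, where $M_{ij}=(\bphi_i,\bphi_j)_\Omega$ is the symmetric positive definite (hence invertible) mass matrix and $G$ collects the convective, viscous and Smagorinsky terms, the consistency term $bc$, the stabilization $s$, and the load vector $((\bf f,\bphi_j)_\Omega)_j$. Since $M$ is invertible this is the explicit system $\dot\bxi = M^{-1}G(t,\bxi)$, with initial datum $\bxi(0)$ fixed by $\bhu_h(0)=\bPi\bu(0)$.

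Next I would verify that $G(t,\cdot)$ is locally Lipschitz on $\mathbb{R}^N$ (and integrable in $t$ through $\bf f$), so that local existence and uniqueness of a maximal solution on some $[0,T^\ast)$ follows. The contributions of the convective term, $bc$, and $s_1$ are polynomial in $\bxi$; the face term $s_0$ is a polynomial in $\bxi$ divided by $|\bhu_h|+U$, which is bounded below by $U>0$ uniformly, and since $s\mapsto (s+U)^{-1}$ is Lipschitz on $[0,\infty)$ and $\bxi\mapsto|\bhu_h|$ is Lipschitz, $s_0$ is locally Lipschitz; the Smagorinsky term reduces, elementwise on the constant gradients, to the map $X\mapsto |X|_F X$ on $\mathbb{R}^{d\times d}$, which is $C^1$ (its differential tends to $0$ as $X\to 0$) and hence locally Lipschitz. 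Therefore Picard--Lindel\"of applies and yields a unique maximal solution.

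Finally I would rule out finite-time blow-up. Proposition \ref{prop:apriori} is valid for the solution restricted to $(0,r)$ for every $r<T^\ast$ (its proof only integrates the equation up to $r$), giving $\sup_{t<T^\ast}\|\bhu_h(t)\|_\Omega \lesssim \int_I\|\bf f\|_\Omega\,\mathrm{d}t+\|\bhu_h(0)\|_\Omega$, a bound independent of $T^\ast$. By equivalence of norms on the finite-dimensional space $\bV_{0h}$, $\bxi(t)$ remains in a fixed compact set on $[0,T^\ast)$, so the standard continuation criterion forces $T^\ast\ge T$; hence the solution exists, and is unique by local uniqueness and connectedness of $[0,T]$, on all of $(0,T]$. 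The only genuinely technical point is the local Lipschitz property of the two nonlinearities (the Smagorinsky viscosity and the $(|\bhu_h|+U)^{-1}$ weight in $s_0$); the rest is the textbook ODE continuation argument.
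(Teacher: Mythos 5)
Your proposal is correct and follows essentially the same route as the paper: rewrite \eqref{eq:FEM} as an ODE system $M\dot{\bxi}=G(t,\bxi)$ with invertible mass matrix, establish local Lipschitz continuity of the nonlinearities (the paper invokes the mean value theorem for the convective and face terms and the continuity bound \eqref{eq:continuous} for the Smagorinsky term, while you argue directly that $X\mapsto|X|_F X$ is $C^1$ and that the $(|\bhu_h|+U)^{-1}$ weight is Lipschitz), and then rule out finite-time blow-up via the a priori bound of Proposition \ref{prop:apriori} combined with norm equivalence on the finite-dimensional space (the paper uses the inverse inequality $\|\bu_h\|_{L^\infty(\Omega)}\leq Ch^{-d/2}\|\bu_h\|_\Omega$ to the same effect). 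If anything, your explicit Lipschitz argument for the $(|\bhu_h|+U)^{-1}$ factor in $s_0$ is slightly more careful than the paper's blanket $C^1$ claim, since $|\bhu_h|$ is only Lipschitz, not smooth, at zero.
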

\begin{proof}
Let $N = \mbox{dim} ~\bV_{0h}$, $\bU: \mathbb{R} \mapsto
\mathbb{R}^N$, $\bF: \mathbb{R}^N \mapsto
\mathbb{R}^N$ and let $\{\bvarphi_i\}$ denotes a basis of $\bV^0_{0h}$. The system \eqref{eq:FEM} is equivalent to a dynamical
system 
\[
M\partial_t \bU(t) + \bF(\bU) = 0,
\]
with $\bU(0) = \bU_0 \in \mathbb{R}^N$
\[
(M\partial_t \bU(t))_i =  (\partial_t \bhu_h, \bvarphi_i)_\Omega,\, i=1,\hdots,N
\]
and
\[
(\bF(\bU))_i :=((\bhu_h \cdot \nabla) \bhu_h, \bvarphi_i)_\Omega + (\mu \nabla \bhu_h,\nabla \bvarphi_i)_\Omega + s(\bhu_h,\bvarphi_i)
- ({\bf f}, \bvarphi_i)_\Omega,\, i=1,\hdots,N.
\]
The matrix $M$ corresponds to
the mass matrix of the velocity finite element basis, defined
blockwise by $M_{ij}:= (\bvarphi_j,\bvarphi_i)_\Omega$. 

By inspection the function
$\bF(\bU)$ is locally Lipschitz. Indeed, for all $\bU, \bY \in \mathbb{R}^N$ there holds
\begin{equation}\label{eq:Lip}
|\bF(\bU) - \bF(\bY)|_{\mathbb{R}^N} \leq L(\bU, \bY) |\bU - \bY|_{\mathbb{R}^N}.
\end{equation}
To verify this we only need to consider the nonlinear terms. Let $\bu_h$ and $\by_h$ denote the functions in
$\bV_{0h}$ associated to the vectors of unknowns $\bU$ and $\bV$ and let $\bw_h = \bu_h -
\by_h$. Then we have
\begin{multline}
(\bF(\bU))_i -  (\bF(\bY))_i = ((\bu_h \cdot \nabla) \bu_h,\bvarphi_i)_\Omega+
(\hat \nu(\bu_h) \nabla \bu_h,\nabla \bvarphi_i)_\Omega + s(\bu_h,\bvarphi_i) \\
-
((\by_h \cdot \nabla) \by_h, \bvarphi_i)_\Omega -  (\hat \nu(\by_h) \nabla \by_h,\nabla \bvarphi_i)_\Omega-  s(\by_h,\bvarphi_i).
\end{multline}
The convective term and the face oriented contribution to the
stabilization both are $[C^1(\mathbb{R}^N)]^N$ and the Lipschitz
continuity follows from the mean value theorem. For the Smagorinsky
term on the other hand it is immediate by the inequality \eqref{eq:continuous}.
It follows that for every $h>0$ there
 exists a $T>0$ such that \eqref{eq:FEM} admits a unique solution on
 $(0,T]$. To extend this to arbitrary time intervals we need a bound on
 $\|\bu_h\|_{L^\infty(\Omega)}$. 
 We apply an inverse inequality
 $\|\bu_h(\cdot,T)\|_{L^\infty(\Omega)} \leq C h^{-\frac{d}{2}}
 \|\bu_h(\cdot,T)\|_\Omega$. Recalling Lemma \ref{prop:apriori} it follows that for fixed $h$ the
 solution is globally bounded. 
This proves the claim.
\end{proof}
\begin{remark}
A consequence of the error analysis below is that the $L^\infty$-bound
on the discrete solution actually holds independently of $h$ provided the
exact solution is smooth. For details see Corollary \ref{cor:inftybound}
\end{remark}
\subsection{Error analysis with exponential growth moderated
  through scale separation} 
We will now prove an error
estimate for the approximation of the regularized equation. We are
interested in underresolved flow so in what follows we assume that
$\mu \leq U h$. Observe that the arguments are valid also for $\mu=0$,
 but in this case the no-slip condition must be relaxed on the
 continuous level. Moreover the same analysis may be used to prove an
 optimal estimate of $O(h)$ in the $H^1$-norm if $\mu \ge U h$.
To moderate the exponential growth
we use a scale separation argument similar to that of section \ref{sec:scale_pert}.

\subsubsection{Scale separation for the finite dimensional
  approximation}
When a finite dimensional space is used for the Smagorinsky-Navier-Stokes' model we
can introduce a scale separation argument similar to \eqref{eq:scale_separation}, but
here the nonlinear feedback in the second equation of \eqref{eq:scale_separation} takes place through the computational
error. As the computational error grows, the exponential growth of the
error is moderated. More precisely define $\bar \bu$ and $\bu'$ by
\eqref{eq:scale_separation} with $\bldeta$ defined by $\bldeta := \bu
- \bhu_h$. We also note that then the result of Lemma \ref{lem:conv_term} holds with $\bldeta$
redefined to be the approximation error. Below we will refer to these
results assuming that $\bldeta$ is redefined as above.
\subsubsection{Error estimates}
 First we prove a Lemma that is
needed to estimate the consistency error of the stabilization term to
the right order.
\begin{lemma}\label{lem:nonline_stab}
For $\bv_h \in \bV_{0h}$, and $f \in L^3(I)$
there holds, for all $ \bv \in \bV$  and for all $\epsilon >0$,
\begin{align*}
&\int_I \|\bv_h(t)\|_{L^\infty(\Omega)} |f(t)|^2 ~\mbox{d}t 
\\
&\qquad \leq \epsilon
\delta^2 \|\nabla(\bv - \bv_h)\|^3_{L^3(Q)} + C\frac{1}{\epsilon} 
\delta^{-1} h^{-\frac{1}{2}}\int_I   |f(t)|^3 ~\mbox{d}t
 +C \int_I \|\bv\|_{W^{2,3}(\Omega)}|f(t)|^2 ~\mbox{d}t.
\end{align*}
\end{lemma}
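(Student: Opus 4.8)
The plan is to split $\|\bv_h(t)\|_{L^\infty(\Omega)}$ by inserting $\bv$: writing $\|\bv_h\|_{L^\infty} \le \|\bv_h - \bv\|_{L^\infty} + \|\bv\|_{L^\infty}$, and then controlling each piece separately against the budget of the three terms on the right-hand side. The second piece is the easy one: by Sobolev embedding $W^{2,3}(\Omega) \hookrightarrow L^\infty(\Omega)$ in $d=3$, we get $\|\bv\|_{L^\infty(\Omega)} \lesssim \|\bv\|_{W^{2,3}(\Omega)}$, so $\int_I \|\bv\|_{L^\infty} |f|^2 \,\mathrm{d}t \lesssim \int_I \|\bv\|_{W^{2,3}(\Omega)} |f|^2 \,\mathrm{d}t$, which is exactly the last term. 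So the whole game is the fine-scale piece $\int_I \|\bv - \bv_h\|_{L^\infty(\Omega)} |f(t)|^2 \,\mathrm{d}t$.

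For that term, the idea is to trade the $L^\infty$ norm of the discrete-plus-smooth error for an $L^3$ norm of its gradient at the cost of a negative power of $h$, so as to expose the dissipation $\delta^2 \|\nabla(\bv-\bv_h)\|^3_{L^3}$. Concretely, I would first apply Lemma \ref{lem:lptol2} with $s=1$, $p=\infty$ (or rather a large finite $p$ then pass to the limit, or directly use the $W^{1,\infty}$ version of the inverse estimate \eqref{eq:Lpinv}) to bound $\|\bv - \bv_h\|_{L^\infty(\Omega)}$; but it is cleaner to go through $W^{1,3}$: by the inverse inequality \eqref{eq:Lpinv} one has, for $\bw = \bv - \bv_h$ (which is of the form "smooth plus finite element"), $\|\bw\|_{L^\infty(\Omega)} \lesssim h^{-d/3}\|\bw\|_{L^3(\Omega)} \lesssim h^{-1}\|\bw\|_{L^3(\Omega)}$ in $d=3$ up to the non-discrete part, and then use Poincar\'e–Friedrichs (Lemma \ref{lem:PF}) or rather the approximation estimate to pass to $\|\nabla \bw\|_{L^3(\Omega)}$, keeping a clean record of the $h$ powers and of the extra smooth term $C\|\bv\|_{W^{2,3}(\Omega)}$ coming from the non-conforming/non-discrete part via Lemma \ref{lem:lptol2} and \eqref{eq:L2proj1}. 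The upshot should be a pointwise-in-time bound of the form $\|\bv - \bv_h\|_{L^\infty(\Omega)} \lesssim h^{-1}\|\nabla(\bv - \bv_h)\|_{L^3(\Omega)} + \|\bv\|_{W^{2,3}(\Omega)}$.

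Once that is in place, plug it into $\int_I \|\bv - \bv_h\|_{L^\infty} |f|^2 \,\mathrm{d}t$ and apply H\"older in time with exponents $(3,3/2)$ to the product $\big(h^{-1}\|\nabla(\bv-\bv_h)(t)\|_{L^3(\Omega)}\big)\cdot |f(t)|^2$, namely $\int_I h^{-1}\|\nabla(\bv-\bv_h)\|_{L^3(\Omega)}|f|^2 \le h^{-1}\big(\int_I \|\nabla(\bv-\bv_h)\|_{L^3(\Omega)}^3\big)^{1/3}\big(\int_I |f|^3\big)^{2/3}$, and then Young's inequality \eqref{eq:young} with exponents $p=3$, $q=3/2$ to split off the dissipation term: choosing the Young weight so that the cube of $\|\nabla(\bv-\bv_h)\|_{L^3(Q)}$ comes out with coefficient $\epsilon\delta^2$ forces the conjugate term to carry the factor $\epsilon^{-2}\delta^{-4} h^{-3}$ times $\int_I |f|^3$. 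This is slightly stronger than the claimed $\epsilon^{-1}\delta^{-1}h^{-1/2}$, so the main subtlety — and what I expect to be the real obstacle — is bookkeeping the exponents: one must use the standing assumption $\delta = O(h)$ (so $\delta^{-4}h^{-3} \lesssim \delta^{-1}h^{-1}\cdot \delta^{-3}h^{-2}$... ) together with the relation between $\delta$ and $h$ and possibly an additional inverse estimate to absorb the discrepancy, and to get the precise power $\delta^{-1}h^{-1/2}$ one likely splits the $L^\infty$–to–$L^3$ step differently (e.g. interpolating $L^\infty$ between $L^3$ and a gradient norm with a fractional power, or using $\|\bw\|_{L^\infty} \lesssim h^{-1/2}\|\bw\|_{L^3}^{1/2}\|\nabla \bw\|_{L^3}^{1/2}$-type Gagliardo–Nirenberg-on-the-mesh estimates) so that fewer negative powers of $h$ and $\delta$ accumulate. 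Getting the constant to read exactly $C\epsilon^{-1}\delta^{-1}h^{-1/2}$ rather than a cruder bound is the delicate point; the rest is H\"older, Young, and the already-established approximation and inverse inequalities.
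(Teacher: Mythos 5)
Your overall architecture matches the paper's: peel off the smooth part with $W^{2,3}(\Omega)\hookrightarrow L^\infty(\Omega)$, and use an inverse estimate plus Young's inequality with exponents $(3,\tfrac32)$ to trade the remaining $L^\infty$ factor against the dissipation $\epsilon\delta^2\|\nabla(\bv-\bv_h)\|^3_{L^3(Q)}$. But the step you flag as ``the delicate point'' is in fact the whole content of the lemma, and your proposal does not supply it. Passing from $L^\infty$ to $\|\nabla(\bv-\bv_h)\|_{L^3(\Omega)}$ at cost $h^{-1}$, i.e.\ $\|\bv-\bv_h\|_{L^\infty}\lesssim h^{-1}\|\nabla(\bv-\bv_h)\|_{L^3}+\|\bv\|_{W^{2,3}}$, and then applying Young with conjugate exponent $\tfrac32$ produces a factor $(h^{-1})^{3/2}\delta^{-1}=\delta^{-1}h^{-3/2}$ in front of $\int_I|f|^3\,\mbox{d}t$, not the stated $\delta^{-1}h^{-1/2}$. (Your own bookkeeping, $\epsilon^{-2}\delta^{-4}h^{-3}$, is further off: for a prefactor $A$ multiplying $\|\nabla(\bv-\bv_h)\|_{L^3}$, the conjugate term is $\epsilon^{-1/2}\delta^{-1}A^{3/2}$.) This lost power of $h$ is not cosmetic. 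In Lemma \ref{lem:stab_consist} the term $\delta^{-1}h^{-1/2}\int_I\|h\jump{\nabla\bPi\bu}\|^3_{\mathcal{F}}\,\mbox{d}t\sim h^{-3/2}\cdot h^{9/2}=h^3$ is exactly borderline, so replacing $h^{-1/2}$ by $h^{-3/2}$ degrades the consistency bound to $O(h^2)$ and destroys the $O(h^{3/2})$ rate of Theorem \ref{thm:error_bound}.

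The missing device is cheap but essential. Write $\bv_h=(\bv_h-\bpi_h\bv)+\bpi_h\bv$ so that the first piece is a genuine finite element function, and bound $\|\bpi_h\bv\|_{L^\infty}\lesssim\|\bv\|_{L^\infty}\lesssim\|\bv\|_{W^{2,3}}$ by stability of the projection. For the discrete piece, combine the inverse inequality $\|\bz_h\|_{L^\infty(\Omega)}\lesssim h^{-d/p}\|\bz_h\|_{L^p(\Omega)}$ with the critical Sobolev embedding $W^{1,3}(\Omega)\hookrightarrow L^p(\Omega)$, valid for \emph{every} finite $p$ when $d=3$: the total cost is only $h^{-d/p}$ with $p$ at your disposal, not $h^{-1}$. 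Then $\|\bv_h-\bpi_h\bv\|_{W^{1,3}}\le\|\nabla(\bv-\bv_h)\|_{L^3}+\|\bv-\bpi_h\bv\|_{W^{1,3}}$ (Poincar\'e--Friedrichs and approximation), Young with exponents $(3,\tfrac32)$ turns the prefactor $h^{-d/p}$ into $h^{-3d/(2p)}$ in the conjugate term, and the choice $p=9$ gives precisely $\delta^{-1}h^{-1/2}$; the leftover $h^{-d/p}\|\bv-\bpi_h\bv\|_{W^{1,3}}\lesssim h^{1-d/p}|\bv|_{W^{2,3}}$ is absorbed into the last term of the claim. Your suggested alternatives (mesh Gagliardo--Nirenberg, fractional interpolation of $L^\infty$) could perhaps be made to work, but the $L^p$-inverse-plus-critical-embedding combination is the intended and simplest route, and without it, or something equivalent, the stated constant is not reached.
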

\begin{proof}
Let $\bw:=\bv- \bv_h$ and $\bw_\pi:=\bv- \bpi_h \bv$.
First we add and subtract $\bpi_h\bv$  to obtain
\[
\|\bv_h(t)\|_{L^\infty(\Omega)}  \leq \|\bv_h(t)-\bpi_h\bv\|_{L^\infty(\Omega)}+\|\bpi_h\bv\|_{L^\infty(\Omega)}.
\]
Then using an inverse inequality \eqref{eq:Lpinv} followed by Morrey's inequality
\cite[Theorem B.42]{EG04} and Lemma \ref{lem:PF} we
see that for all $p<\infty$
\[
\|\bv_h(t)-\bpi_h\bv\|_{L^\infty(\Omega)} \leq C_p h^{-\frac{d}{p}}
(\|\nabla \bw(t) \|_{L^3(\Omega)}+\|\bw_{\pi}(t) \|_{W^{1,3}(\Omega)}) .
\]
Using also the stability of the $L^2$-projection on quasi-uniform
meshes we have
\[
\|\bv_h(t)\|_{L^\infty(\Omega)} \leq C_p h^{-\frac{d}{p}}(
\|\nabla \bw(t) \|_{L^3(\Omega)}+\|\bw_{\pi}(t) \|_{W^{1,3}(\Omega)})  +
C \|\bv\|_{L^\infty(\Omega)}.
\]
It follows that
\begin{align*}
\int_I \|\bv_h(t)\|_{L^\infty(\Omega)} |f(t)|^2 ~\mbox{d}t 
&\leq \int_I (C_p h^{-\frac{d}{p}}(
\|\nabla \bw(t) \|_{L^3(\Omega)}+\|\bw_{\pi}(t) \|_{W^{1,3}(\Omega)})  +
C \|\bv\|_{L^\infty(\Omega)}) |f(t)|^2 ~\mbox{d}t 
\\
&\leq \epsilon \delta^2 \|\nabla \bw(t) \|^3_{L^3(Q)} +
\frac{1}{\epsilon} C _p^{\frac32} \delta^{-1} h^{-\frac{3 d}{2
    p}}\int_I |f(t)|^3 ~\mbox{d}t    
    \\
    &\qquad 
+\int_I (C_p h^{-\frac{d}{p}}\|\bw_{\pi}(t) \|_{W^{1,3}(\Omega)} + C \|\bv\|_{L^\infty(\Omega)}) |f(t)|^2 ~\mbox{d}t.
\end{align*}
The claim now follows by taking $p=9$ and observing that $\|\bw_{\pi}(t)
\|_{W^{1,3}(\Omega)} \leq C h |\bv|_{W^{2,3}(\Omega)}$ and
$\|\bv\|_{L^\infty(\Omega)} \leq  \|\bv\|_{W^{2,3}(\Omega)}$.
\end{proof}
One application of the previous lemma is the following consistency
bound for the stabilizing term
\begin{lemma}\label{lem:stab_consist}
The stabilizing term satisfies the bound
\[
\int_I s(\bPi \bu,\bPi \bu) ~\mbox{d}t \leq \epsilon
\delta^2 \|\nabla(\bu - \bhu_h)\|^3_{L^3(Q)}   + C \max(\mu h^{-1}, U)
 h^3 \|\bu\|_{L^2(I;H^2(\Omega))}^2 + C h^3  \|\bu\|_{L^3(I;W^{2,3}(\Omega))}^3.
\]
\end{lemma}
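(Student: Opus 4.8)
The plan is to split the stabilization as $s = s_0 + s_1$, estimate the boundary term $s_1$ and the convective face term $s_0$ separately, and then invoke Lemma~\ref{lem:nonline_stab} to absorb the $L^\infty$-norm of the discrete velocity produced by the quadratic convective term.

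For the boundary term, recall that the exact solution satisfies the no-slip condition, so $t\bu = 0$ on $\partial\Omega$ (the term is simply absent under periodic conditions); hence $t\bPi\bu = t(\bPi\bu-\bu)$ on $\partial\Omega$. Since $\gamma_1 = O(1)$ and the boundary estimate in \eqref{eq:L2proj2} gives $\|\bu-\bPi\bu\|_{\partial\Omega}\lesssim h^{3/2}|\bu|_{H^2(\Omega)}$,
\[
\int_I s_1(\bPi\bu,\bPi\bu)\,dt = \gamma_1\max(\mu h^{-1},U)\int_I\|t(\bPi\bu-\bu)\|_{\partial\Omega}^2\,dt \lesssim \max(\mu h^{-1},U)\,h^3\,\|\bu\|_{L^2(I;H^2(\Omega))}^2,
\]
which is the second term on the right-hand side.

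For the convective face term I would exploit that $\bPi\bu$ is $H^1$-conforming and affine on each element: its tangential gradient is single-valued across a face, so the gradient jump is purely normal with magnitude $|\jump{\nabla\bPi\bu}|$, and since the advecting field in $s_0$ (the discrete velocity $\bhu_h$ — the nonlinear data being frozen at $\bhu_h$ in the seminorm that arises after the Cauchy-Schwarz step in the error identity) is continuous across the face, one gets the pointwise bound $|\jump{(\bhu_h\cdot\nabla)\bPi\bu\times\bn}|\le |\bhu_h|\,|\jump{\nabla\bPi\bu}|$. Combining this with the damping weight $(|\bhu_h|+U)^{-1}$, keeping a single power of $|\bhu_h|$, and then \eqref{eq:jump_bound}, gives $s_0(\bPi\bu,\bPi\bu)\lesssim \|\bhu_h\|_{L^\infty(\Omega)}\,\|h\jump{\nabla\bPi\bu}\|_{\mathcal{F}}^2\lesssim \|\bhu_h\|_{L^\infty(\Omega)}\,h^3\,|\bu|_{H^2(\Omega)}^2$, so that $\int_I s_0(\bPi\bu,\bPi\bu)\,dt\lesssim h^3\int_I\|\bhu_h\|_{L^\infty(\Omega)}\,|\bu|_{H^2(\Omega)}^2\,dt$.

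Finally I would apply Lemma~\ref{lem:nonline_stab} with $\bv=\bu$, $\bv_h=\bhu_h$, and $f(t):=h^{3/2}|\bu(t)|_{H^2(\Omega)}$, which belongs to $L^3(I)$ because $\bu\in L^3(I;W^{2,3}(\Omega))\hookrightarrow L^3(I;H^2(\Omega))$ on the bounded domain $\Omega$. This bounds the last integral by $\epsilon\delta^2\|\nabla(\bu-\bhu_h)\|_{L^3(Q)}^3 + C\epsilon^{-1}\delta^{-1}h^{-1/2}\int_I|f|^3\,dt + C\int_I\|\bu\|_{W^{2,3}(\Omega)}|f|^2\,dt$; inserting $|f|^2=h^3|\bu|_{H^2(\Omega)}^2$ and $|f|^3=h^{9/2}|\bu|_{H^2(\Omega)}^3$, and using the standing assumption $\delta=O(h)$ together with $|\bu|_{H^2(\Omega)}\le C\|\bu\|_{W^{2,3}(\Omega)}$, both lower-order terms collapse to $\lesssim h^3\|\bu\|_{L^3(I;W^{2,3}(\Omega))}^3$ (the $\epsilon$-dependence of their constants being harmless and absorbed into $C$); adding the $s_1$-contribution completes the proof. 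The main obstacle is exactly this last step: the $L^\infty$-norm of $\bhu_h$ generated by the quadratic convective stabilization must be absorbed with no negative power of $h$ surviving, and the residual must come out as $\epsilon\delta^2\|\nabla(\bu-\bhu_h)\|_{L^3(Q)}^3$ with a \emph{free} $\epsilon$, so that it can later be absorbed by the coercive Smagorinsky term $\|\nu(\delta)^{1/3}\nabla(\bu-\bhu_h)\|_{L^3(Q)}^3$ in the global energy estimate — which is precisely what Lemma~\ref{lem:nonline_stab}, the affine structure of $\bV_h$, and the coupling $\delta=O(h)$ are engineered to deliver.
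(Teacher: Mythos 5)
Your proposal is correct and follows essentially the same route as the paper: split $s$ into $s_0+s_1$, bound $s_1$ via the no-slip condition and the boundary approximation estimate \eqref{eq:L2proj2}, reduce $s_0$ to $\|\bhu_h\|_{L^\infty(\Omega)}\|h\jump{\nabla\bPi\bu}\|_{\mathcal{F}}^2$ using the continuity of $\bhu_h$ across faces and the damping weight, and then absorb the $L^\infty$-factor with Lemma~\ref{lem:nonline_stab} together with \eqref{eq:jump_bound} and $\delta=O(h)$. The only (immaterial) difference is that you apply \eqref{eq:jump_bound} before invoking Lemma~\ref{lem:nonline_stab}, whereas the paper takes $f(t)=\|h\jump{\nabla\bPi\bu}\|_{\mathcal{F}}$ and applies the jump bound afterwards.
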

\begin{proof}
By the definition of the stabilization term we have the bound
\[
 s(\bPi \bu,\bPi \bu)\leq \gamma_0 \|
 \bhu_h\|_{L^{\infty}(\Omega)} \|h \jump{\nabla
   \bPi \bu}\|_{\mathcal{F}}^2 + \gamma_1 \max(\mu h^{-1}, U)\|t \bPi \bu\|^2_{\partial \Omega}.
\]
Then by Lemma \ref{lem:nonline_stab},
$s$ admits the bound
\begin{multline}\label{eq:stab_approx_nonlin}
\int_I  s(\bPi \bu,\bPi \bu) ~\mbox{d}t \leq \epsilon
\delta^2 \|\nabla (\bu - \bhu_h)\|^3_{L^3(Q)} + C
\delta^{-1} h^{-\frac{1}{2}}\int_I   \|h \jump{\nabla
   \bPi \bu}\|_{\mathcal{F}}^3 ~\mbox{d}t\\
 +C \int_I \|\bu\|_{W^{2,3}(\Omega)}\|h \jump{\nabla
   \bPi \bu}\|_{\mathcal{F}}^2 ~\mbox{d}t + \gamma_1 \max(\mu h^{-1},
 U) \int_I \|t \bPi \bu\|^2_{\partial \Omega} ~\mbox{d}t.
\end{multline}
Using \eqref{eq:jump_bound}, and $|\bu|_{H^2(\Omega)} \lesssim \|\bu\|_{W^{2,3}(\Omega)}$ we have
\[
C
\delta^{-1} h^{-\frac{1}{2}}\int_I   \|h \jump{\nabla
   \bPi \bu}\|_{\mathcal{F}}^3 ~\mbox{d}t
 +C \int_I \|\bu\|_{W^{2,3}(\Omega)}\|h \jump{\nabla
   \bPi \bu}\|_{\mathcal{F}}^2 \leq C h^{-\frac32} h^{\frac92}
 |\bu|_{L^2(I;H^2(\Omega))}^3 + C h^3  \|\bu\|_{L^3(I;W^{2,3}(\Omega))}^3 
\]
The no-slip condition satisfied by $\bu$, the trace inequality \eqref{trace_H1} and the
approximation \eqref{eq:L2proj2} may be applied to control the second term,
\begin{align*}
\gamma_1 \max(\mu h^{-1}, U)\|t \bPi \bu\|^2_{\partial \Omega} 
&\leq C
\max(\mu h^{-1}, U) (h^{-1} \|\bu - \bPi \bu\|^2_{ \Omega}  +
h  \|\nabla (\bu - \bPi \bu)\|^2_{ \Omega} ) 
\\
&\leq  C \max(\mu h^{-1}, U)
 h^3 \|\bu\|_{L^2(I;H^2(\Omega))}^2.
\end{align*}
The claim follows by collecting the above bounds.
\end{proof}
\begin{theorem}\label{thm:error_bound}
Let $\bu \in 
\bV$ and that $\bu$ in addition has sufficient regularity so that
$\mathcal{C}(\bu) < \infty$ if we define (assuming $U\leq\|\bu\|_{L^3(I;W^{2,3}(\Omega))}$),
\begin{multline}\label{eq:reg_const}
 \mathcal{C}(\bu)^2:= 
h (\tau_L \|\nabla \times \bu\|_{L^2(I;L^\infty(\Omega))}
+1)|\bu|^2_{L^\infty(I;H^2(\Omega))} \\
+
\|\bu\|^3_{L^3(I;W^{2,3}(\Omega))}+\|\bu\|^4_{L^4(I;W^{2,3}(\Omega))} + h
\|\nabla \cdot |\nabla \bu|_F \nabla \bu\|^2_Q
\end{multline}
Assume that the parameter $\delta = O(h)$ and $\mu \leq U h$, and that
the hypothesis of
Theorem \ref{prop:apriori} hold. Then there holds
\[
\sup_{t \in I} \|(\bu - \bhu_h)(t)\|_\Omega + \tnorm{\bu - \bhu_h}  \lesssim \mathcal{C}(\bu)
e^{18(T/\tau_L)} h^{\frac32},
\]
where 
\[
\tnorm{\bu - \bhu_h} := \|\mu^{\frac12}
\nabla (\bu - \bhu_h)(t)\|_Q + \tau_L^{\frac12} h \|\nabla (\bu -
\bhu_h)\|^{3/2}_{L^3(Q)} + \tau_L^{\frac12} \left(\int_I s(\bhu_h,\bhu_h)~\mbox{d}t
\right)^{\frac12}.
\]
The coefficient $\tau_{L}$ in the
exponential is defined by \eqref{eq:char_time}
where $\bar \bu$ this time is the large scale component of the solution defined
by \eqref{eq:scale_separation}, with $\bldeta = \bu - \bhu_h$. The
hidden constant has at most polynomial growth in $T$.
\end{theorem}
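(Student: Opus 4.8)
The plan is to run an energy argument on the discrete error, splitting off the interpolation error, and to treat the convective term by combining \emph{both} the scale–separation mechanism of Theorem~\ref{thm:red_exp} (through Lemma~\ref{lem:conv_term}) and the vorticity device of Proposition~\ref{prop:lin_error_est}. Write $\bldeta := \bu - \bhu_h = \bldeta_\pi + \btheta_h$ with $\bldeta_\pi := \bu - \bPi\bu$ and $\btheta_h := \bPi\bu - \bhu_h \in \bV_{0h}$; since $\bPi$ is the $L^2$-orthogonal projection onto $\bV_{0h}$ and $\partial_t\bu$ is divergence free, $(\partial_t\bldeta_\pi,\btheta_h)_\Omega = 0$, so $\tfrac12\tfrac{d}{dt}\|\btheta_h\|_\Omega^2 = (\partial_t\bu,\btheta_h)_\Omega - (\partial_t\bhu_h,\btheta_h)_\Omega$. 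Using the weak Navier--Stokes equation tested against $\btheta_h$ (the pressure drops because $\nabla\cdot\btheta_h=0$; a Nitsche boundary term $-\mu(\nabla\bu\,\bn, t\btheta_h)_{\partial\Omega}$ appears since $\btheta_h$ has only vanishing normal trace) together with \eqref{eq:FEM}, one obtains an identity of the form
\[
\tfrac12\tfrac{d}{dt}\|\btheta_h\|_\Omega^2 + \|\mu^{\frac12}\nabla\btheta_h\|_\Omega^2 + (\hat\nu(\bhu_h)\nabla\bhu_h,\nabla\btheta_h)_\Omega + s(\bhu_h,\btheta_h) + bc(\bhu_h,\btheta_h) = \mathcal{N} + \mathcal{R}_h,
\]
where $\mathcal{N}$ is the convective difference $((\bhu_h\cdot\nabla)\bhu_h - (\bu\cdot\nabla)\bu,\btheta_h)_\Omega$ and $\mathcal{R}_h$ gathers the interpolation residual $-\mu(\nabla\bldeta_\pi,\nabla\btheta_h)_\Omega$ and the boundary term.

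For the dissipative block I add and subtract $\hat\nu(\bu)\nabla\bu$ and write $\nabla\btheta_h = \nabla\bldeta - \nabla\bldeta_\pi$. The monotonicity \eqref{eq:monotone_L3} turns $(\hat\nu(\bhu_h)\nabla\bhu_h - \hat\nu(\bu)\nabla\bu,\nabla\bldeta)_Q$ into a good term of size $\tfrac14\nu(\delta)\|\nabla\bldeta\|_{L^3(Q)}^3$ on the left, while the residual paired with $\nabla\bldeta_\pi$ is controlled by the continuity estimate \eqref{eq:continuous}, H\"older, Young and \eqref{eq:L2proj1}, at the cost of an absorbable $\epsilon\,\nu(\delta)\|\nabla\bldeta\|_{L^3(Q)}^3$ and terms of order $O(h^3)$ in the squared norm. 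The genuine Smagorinsky consistency $(\hat\nu(\bu)\nabla\bu,\nabla\btheta_h)_\Omega$ must be integrated by parts exactly as in \eqref{eq:consist_error2} (not estimated as in \eqref{eq:consist_error1}, which would only give an $O(\delta^2)$ bound on the squared norm and hence $O(h)$ in the end), producing $\tau_{L}\nu(\delta)^2\|\nabla\cdot(|\nabla\bu|_F\nabla\bu)\|_Q^2 + \tau_{L}^{-1}\|\btheta_h\|_Q^2$ plus a boundary term $(\hat\nu(\bu)\nabla\bu\,\bn, t\bhu_h)_{\partial\Omega}$ absorbed by $s_1$; this is the source of the term $h\|\nabla\cdot|\nabla\bu|_F\nabla\bu\|_Q^2$ in $\mathcal C(\bu)^2$ and the reason the additional regularity of $\bu$ is needed. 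The boundary term $bc(\bhu_h,\btheta_h)$ together with the Nitsche residual is handled by Lemma~\ref{lem:bc_stab} and $\mu\le Uh$. Finally, using that $s(\bhu_h,\cdot)$ is linear in its second argument for fixed first argument, $s(\bhu_h,\btheta_h) = s(\bhu_h,\bPi\bu) - s(\bhu_h,\bhu_h)$: the negative part joins the left-hand side as dissipation, while $s(\bhu_h,\bPi\bu)$ is estimated by Cauchy--Schwarz in the frozen bilinear form and then by Lemma~\ref{lem:stab_consist}, which through Lemma~\ref{lem:nonline_stab} again gives an absorbable $\epsilon\,\delta^2\|\nabla\bldeta\|_{L^3(Q)}^3$ plus $O(h^3)$ data terms and, crucially, no inverse power of $\delta\sim h$.

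The convective difference $\mathcal{N}$, after using the skew symmetry of the trilinear form on $\bV_{0h}$, reduces up to sign to $((\btheta_h\cdot\nabla)\bu,\btheta_h)_\Omega + ((\bldeta_\pi\cdot\nabla)\bu,\btheta_h)_\Omega + ((\bhu_h\cdot\nabla)\bldeta_\pi,\btheta_h)_\Omega$. The first term drives the exponential growth: writing once more $\btheta_h = \bldeta - \bldeta_\pi$, the leading piece $((\bldeta\cdot\nabla)\bu,\bldeta)_Q$ is bounded by the (redefined) Lemma~\ref{lem:conv_term} using the scale separation \eqref{eq:scale_separation} with $\bldeta = \bu - \bhu_h$, so only the coarse gradient $|\nabla\bar\bu|_F$ and $\tau_L^{-1}$ enter the Gronwall coefficient, and the remaining cross pieces with $\bldeta_\pi$ are controlled by H\"older, inverse/Sobolev inequalities and interpolation, drawing on the $\delta^2\|\nabla\bldeta\|_{L^3(Q)}^3$ budget (this is where $\|\bu\|^3_{L^3(I;W^{2,3})}$ and $\|\bu\|^4_{L^4(I;W^{2,3})}$ appear in $\mathcal C(\bu)^2$). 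The last term cannot tolerate a Cauchy--Schwarz costing an inverse inequality on $\nabla\btheta_h$, so it is treated as in Proposition~\ref{prop:lin_error_est}: rewrite $\bldeta_\pi = \nabla\times\bE$ with $\bE$ from \eqref{eq:E1_Omega}--\eqref{eq:E3_Omega}, integrate $\curl$ by parts over $\mathcal{T}$, and apply Lemma~\ref{derivative-identity}. Since $\btheta_h$ is piecewise affine, $\bhu_h\cdot\nabla(\curl\btheta_h)$ vanishes elementwise and only the skeleton term $\sum_T\int_{\partial T}(\jump{\bhu_h\cdot\nabla\bhu_h\times\bn})\cdot\bE\times\bn$ survives, absorbed by $s_0$; the remaining $((\nabla\bhu_h)^t\times\nabla\btheta_h,\bE)_\Omega$ is split with $\nabla\btheta_h = \nabla\bldeta - \nabla\bldeta_\pi$, the $\nabla\bldeta$-part absorbed into $\epsilon\,\nu(\delta)\|\nabla\bldeta\|_{L^3(Q)}^3$ via Lemma~\ref{lem:weak_approx} and the a priori bound on $\|\nabla\bhu_h\|_{L^3(Q)}$ from Proposition~\ref{prop:apriori}, and the contribution involving $\curl\bu$ is bounded with $\|\nabla\times\bu\|_{L^\infty(\Omega)}$, explaining the factor $h(\tau_{L}\|\nabla\times\bu\|_{L^2(I;L^\infty(\Omega))}+1)|\bu|^2_{L^\infty(I;H^2(\Omega))}$ in $\mathcal C(\bu)^2$. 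The middle term $((\bldeta_\pi\cdot\nabla)\bu,\btheta_h)_\Omega$ is a plain interpolation term.

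Collecting these bounds, fixing $\epsilon$ small enough that every contribution proportional to $\nu(\delta)\|\nabla\bldeta\|_{L^3(Q)}^3$, $\|\mu^{\frac12}\nabla\bldeta\|_Q^2$ and $s(\bhu_h,\bhu_h)$ is absorbed into the left-hand side (after restoring $\nabla\bldeta = \nabla\btheta_h + \nabla\bldeta_\pi$ and paying the $O(h^3)$ price), and multiplying the relevant dissipation by $\tau_{L}$ to reconstitute $\tnorm{\bu-\bhu_h}$ (a $\tau_{L}\le T$ prefactor being exactly the polynomial-in-$T$ growth allowed in the statement), one arrives at
\[
\tfrac12\tfrac{d}{dt}\|\btheta_h\|_\Omega^2 + c\,\bigl(\|\mu^{\frac12}\nabla\btheta_h\|_\Omega^2 + \nu(\delta)\|\nabla\bldeta\|_{L^3(\Omega)}^3 + |\bhu_h|_s^2\bigr) \le 2\bigl(c_0\tau_{L}^{-1} + \|\nabla\bar\bu(t)\|_{L^\infty(\Omega)}\bigr)\|\btheta_h\|_\Omega^2 + \mathcal C(\bu)^2 h^3.
\]
Integrating in time, using $2\int_I(c_0\tau_{L}^{-1} + \|\nabla\bar\bu(t)\|_{L^\infty(\Omega)})\,dt \le 18\,T/\tau_{L}$ by \eqref{eq:char_time}, applying Gronwall's inequality, and finally adding back $\bldeta_\pi$ with \eqref{eq:L2proj1}--\eqref{eq:L2proj2} (note $\|\mu^{\frac12}\nabla\bldeta_\pi\|_Q = O(h^{3/2})$ since $\mu\le Uh$) gives the claimed estimate. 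The main obstacle is precisely the simultaneous routing of the convective term: $((\btheta_h\cdot\nabla)\bu,\btheta_h)$ must be passed through Lemma~\ref{lem:conv_term} to keep the exponent proportional to $T/\tau_L$ rather than $T\|\nabla\bu\|_{L^\infty}$, while $((\bhu_h\cdot\nabla)\bldeta_\pi,\btheta_h)$ must be passed through the curl identity to keep the $O(h^{3/2})$ rate, and one must then verify that all cross terms created by the two simultaneous rewritings $\btheta_h = \bldeta - \bldeta_\pi$ are genuinely higher order — which works only because the discrete Smagorinsky term controls $\nu(\delta)\|\nabla\bldeta\|_{L^3(Q)}^3$ and $\bE$ carries two extra powers of $h$. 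Keeping the nonlinear stabilization consistency at $O(h^3)$ with no inverse power of $\delta$, through Lemmas~\ref{lem:nonline_stab} and \ref{lem:stab_consist}, is the other delicate bookkeeping point.
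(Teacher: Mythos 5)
Your proposal is correct and follows the same architecture as the paper's proof: an energy identity for the error, Lemma \ref{lem:conv_term} (with $\bldeta=\bu-\bhu_h$) to route the dangerous convective term through the scale separation so that only $\tau_L^{-1}+|\nabla\bar\bu|_F$ enters the Gronwall coefficient, the regularized error $\bE$ of \eqref{eq:E1_Omega}--\eqref{eq:E3_Omega} together with Lemma \ref{derivative-identity} for the convective/interpolation pairing, monotonicity \eqref{eq:monotone_L3} for the nonlinear viscosity, integration by parts as in \eqref{eq:consist_error2} for the Smagorinsky consistency, and Lemmas \ref{lem:nonline_stab}--\ref{lem:stab_consist} to keep the stabilization consistency at $O(h^3)$ without inverse powers of $\delta$.

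Two bookkeeping differences from the paper are worth recording. First, you run the ODE on $\btheta_h=\bPi\bu-\bhu_h$ and pair residuals with $\btheta_h$, whereas the paper works directly with $\bldeta$ and pairs the consistency terms with $\bldeta_\Pi$; these are equivalent. Second, and more substantively, in the curl step you land the curl on the \emph{discrete} function $\btheta_h$, so the second-order term $\bhu_h\cdot\nabla(\curl\btheta_h)$ vanishes elementwise; the paper instead lands it on $\bldeta$, so the term $(\bhu_h\cdot\nabla)(\nabla\times\bu)$ survives and is the actual source of the factor $\tau_L\|\nabla\times\bu\|_{L^2(I;L^\infty(\Omega))}|\bu|^2_{L^\infty(I;H^2(\Omega))}$ in $\mathcal{C}(\bu)^2$ --- in your variant that factor would not arise where you say it does. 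The price of your variant is in the skeleton term: the surviving jump is $\jump{(\bhu_h\cdot\nabla)\btheta_h\times\bn}=\jump{(\bhu_h\cdot\nabla)\bPi\bu\times\bn}-\jump{(\bhu_h\cdot\nabla)\bhu_h\times\bn}$, not $\jump{(\bhu_h\cdot\nabla)\bhu_h\times\bn}$ alone as you wrote; only the second piece is absorbed by $s_0$, and the $\bPi\bu$ piece must be estimated via \eqref{eq:jump_bound} and Lemma \ref{lem:nonline_stab} (exactly the machinery of Lemma \ref{lem:stab_consist}), which does yield the required $O(h^3)$ but is an extra step your writeup glosses over. Neither point invalidates the argument; both are repaired by lemmas already at hand.
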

\begin{proof}
The proof follows the ideas of the result for the linear model problem
Proposition \ref{prop:lin_error_est}, but the nonlinearity of the
equation and the stabilization adds a layer of technicality. We
proceed in 4 steps.
\paragraph{\bf{Step 1. Perturbation equation}}
Let $\bldeta = \bu - \bhu_h$, $\bldeta_\Pi = \bu - \bPi \bu$ and
$\bldeta_\pi = \bu - \bpi \bu$. Similarly as in the perturbation argument
for the Smagorinsky model we have, since $\nabla \cdot \bhu_h = 0$ and
$\bhu_h\cdot \bn\vert_{\partial \Omega} = 0$,
\begin{align*}
&\frac12 \frac{d}{dt} \|\bldeta\|_\Omega^2 + ((\bldeta \cdot \nabla)
\bu,\bldeta)_\Omega +  \mu\|\nabla \bldeta\|^2_{\Omega}+\frac14 \delta^2
\|\nabla \bldeta\|_{L^3(\Omega)}^3 + s(\bhu_h,\bhu_h) 
\\
&\qquad 
\leq (\partial_t \bldeta, \bldeta)_\Omega + ((\bldeta \cdot \nabla)
\bu,\bldeta)_\Omega+ ((\bhu_h \cdot \nabla) \bldeta,\bldeta)_\Omega + (\mu \nabla \bldeta, \nabla \bldeta)_\Omega
\\
&\qquad \qquad + (\hat \nu(\bu) \nabla \bu - \hat \nu(\bhu_h)
\nabla \bhu_h, \nabla \bldeta) + s(\bhu_h,\bhu_h).
\end{align*}
Using the definition of the finite element method \eqref{eq:FEM} we have
the consistency relation
\begin{align*}
& (\partial_t \bldeta, \bldeta)_\Omega + ((\bldeta \cdot \nabla)
\bu,\bldeta)_\Omega + ((\bhu_h \cdot \nabla) \bldeta,\bldeta)_\Omega + (\mu \nabla \bldeta, \nabla \bldeta)_\Omega
+ (\hat \nu(\bu) \nabla \bu - \hat \nu(\bhu_h)
\nabla \bhu_h, \nabla \bldeta) + s(\bhu_h,\bhu_h)
\\
&\qquad 
=  (\partial_t \bldeta, \bldeta_\Pi)_\Omega + ((\bldeta \cdot \nabla)
\bu,\bldeta_\Pi)_\Omega + ((\bhu_h \cdot \nabla)
\bldeta,\bldeta_\Pi)_\Omega + (\mu \nabla \bldeta, \nabla
\bldeta_\Pi)_\Omega + bc(\bldeta,\bPi \bldeta)
\\
&\qquad \qquad 
+(\hat \nu(\bu) \nabla \bu, \nabla
\bPi \bldeta)_\Omega+ (\hat \nu(\bu) \nabla \bu - \hat \nu(\bhu_h) \nabla \bhu_h, \nabla
\bldeta_\Pi)_\Omega + s(\bhu_h,\bPi \bu)
.
\end{align*}
Integrating in time over the interval $I$ yields
\begin{align}\label{eq:1st_bound}
&\frac12\|\bldeta(T)\|_\Omega^2 + ((\bldeta \cdot \nabla)
\bu,\bldeta)_Q+ \frac14 \|\mu^{\frac12} \nabla \bldeta \|_Q^2 +\frac14 \delta^2
\|\nabla \bldeta(t)\|_{L^3(Q)}^3 + \frac12\int_I s(\bhu_h,\bhu_h) ~\mbox{d}t
\\ \nonumber
&\qquad \leq (\partial_t \bldeta, \bldeta_\Pi)_Q + ((\bldeta \cdot \nabla)
\bu,\bldeta_\Pi)_Q 
+ ((\bhu_h \cdot \nabla) \bldeta,\bldeta_\Pi)_Q + (\mu \nabla \bldeta, \nabla \bldeta_\Pi)_Q
\\ \nonumber
&\qquad \qquad + (\hat \nu(\bu) \nabla \bu - \hat \nu(\bhu_h)
\nabla \bhu_h, \nabla \bldeta_\Pi)_Q - (\hat \nu(\bu) \nabla \bu, \nabla
\Pi \bldeta)_Q  
\\ \nonumber
&\qquad \qquad 
+ \frac12\|\bldeta_{\Pi}(0)\|_\Omega^2 +  \int_I (bc(\bldeta,\bPi \bldeta)+ s(\bhu_h,\bPi \bu)) ~\mbox{d}t.
\end{align}
\paragraph{\bf{Step 2. Continuity and approximation properties of the right hand side of
  the perturbation equation}}
We now bound the terms in the right hand side. There are two different
main difficulties involving nonlinearities. The material derivative and the viscous term on
the one hand and the nonlinear stabilization on the other.

\noindent {\bf 2.1. Material derivative and viscous term.}
The following term will be bounded in this paragraph.
\[
 (\partial_t \bldeta, \bldeta_\Pi)_Q + ((\bldeta \cdot \nabla)
\bu,\bldeta_\Pi)_Q 
+ ((\bhu_h \cdot \nabla) \bldeta,\bldeta_\Pi)_Q + (\mu \nabla \bldeta, \nabla \bldeta_\Pi)_Q.
\]
First for the time
derivative observe that using the orthogonality of the approximation
error $\bldeta_\Pi$,
\[
(\partial_t \bldeta, \bldeta_\Pi)_Q = (\partial_t \bldeta_\Pi,
\bldeta_\Pi)_Q \leq C h^4 |\bu|_{L^\infty(I;H^2(\Omega))}^2
\]
and for the linear viscous term 
\[
 (\mu \nabla \bldeta, \nabla
\bldeta_\Pi)_Q \leq \frac18 \|\mu^{\frac12} \nabla \bldeta\|_Q^2 + 2
\|\mu^{\frac12} \nabla \bldeta_\Pi\|^2_Q \leq \frac{1}{16} \|\mu^{\frac12}
\nabla \bldeta\|_Q^2  + C \mu h^2 |\bu|_{L^2(I;H^2(\Omega))}^2.
\]
For the convective part we write
\begin{equation}\label{eq:convect_NS}
((\bldeta \cdot \nabla)
\bu,\bldeta_\Pi)_Q 
+ ((\bhu_h \cdot \nabla) \bldeta,\bldeta_\Pi)_Q = I + II.
\end{equation}
For the first term $I$ we use similar arguments as those for Lemma
\ref{lem:conv_term} to get the bound
\[
I \leq \frac12 \|(\tau_L^{-1} + |\nabla \bar \bu|_F)^{\frac12}
\bldeta\|_Q^2 
+ \frac{1}{32} \delta^2 \|\nabla \bldeta \|_{L^3(Q)}^3 +  C\|\mu^{\frac12} \nabla \bldeta_\Pi \|_{L^2(Q)}^2+ C \delta^2
\|\nabla \bldeta_\Pi\|_{L^3(Q)}^3.
\]
In particular note that
\begin{multline}
(|\bu'|,(\bldeta \cdot \nabla)\bldeta_\Pi)_Q \leq \|\tau_L^{-\frac12}
\bldeta\|_Q \||(\mu+\nu(\bhu_h))^{\frac12} \nabla \bldeta_\Pi\|_Q\\
\leq \frac12 \|\tau_L^{-\frac12}
\bldeta\|_Q^2 + \frac{1}{32} \delta^2 \|\nabla \bldeta \|_{L^3(Q)}^3 +
C \|\mu^{\frac12}\nabla \bldeta_\Pi\|_{L^2(Q)}^2+ C \delta^2
\|\nabla \bldeta_\Pi\|_{L^3(Q)}^3.
\end{multline}
By Lemma \ref{lem:lptol2} we may write the last term in the right hand side
\[
\delta^2
\|\nabla \bldeta_\Pi\|_{L^3(Q)}^3 \lesssim \delta^2
\|\nabla \bldeta_\pi\|_{L^3(Q)}^3 + \delta^2 h^{-\frac32} \|\nabla \bldeta_\Pi\|_{L^2(Q)}^3.
\]
Since $\delta^2 h^{-\frac32} =
h^{\frac12}$ and $|\bu|_{H^2(\Omega)} \leq |\bu|_{W^{2,3}(\Omega)}$ we
have using approximation that, 
\[
I \leq \frac12 \|(\tau_L^{-1} + |\nabla \bar \bu|_F)^{\frac12}
\bldeta\|_Q^2 
+ \frac{1}{32} \delta^2 \|\nabla \bldeta \|_{L^3(Q)}^3 + \mu h^2
|\bu|_{L^2(I;H^2(\Omega))}^2 +  h^{\frac72} |\bu|_{L^3(I;W^{2,3}(\Omega))}^3.
\]
For the second term of the right hand side of \eqref{eq:convect_NS} we first use that there exists $\bE$ according to
\eqref{eq:E1_Omega}-\eqref{eq:E3_Omega} to write using partial integration
\begin{align*}
II &= ((\bhu_h \cdot \nabla) \bldeta,\nabla \times \bE)_Q = 
\int_I \underbrace{
(\jump{(\bhu_h \cdot \nabla) \bldeta \times \bn}, \bE \times
\bn)_{\mathcal{F}}}_{II_a}~\mbox{d}t  
\\
&\qquad + \int_I \underbrace{((\bhu_h \cdot \nabla) (\nabla \times \bldeta) ,
\bE)_{\mathcal{T}} }_{II_b} ~\mbox{d}t +\underbrace{\int_I ((\nabla \bhu_h)^t\times \nabla \bldeta) ,
\bE)_{\mathcal{T}} ~\mbox{d}t }_{II_c}.
\end{align*}
For the first term $II_a$ we use Cauchy-Schwarz inequality followed by the
arithmetic-geometric inequality
\begin{align*}
II_a &\leq \left(\int_I s_0(\bhu_h,\bhu_h) ~\mbox{d}t \right)^{\frac12}
\left(\gamma_0^{-1}\int_I
(\|\bhu_h\|_{L^{\infty}(\Omega)} + U)\|h^{-1} \bE
\|_{\mathcal{F}}^2 ~\mbox{d}t \right)^{\frac12} \\
\\
&\leq
\frac18\int_I s_0(\bhu_h,\bhu_h) ~\mbox{d}t + 2 \gamma_0^{-1}\int_I
(\|\bhu_h\|_{L^{\infty}(\Omega)}+U) \|h^{-1} \bE
\|_{\mathcal{F}}^2 ~\mbox{d}t .
\end{align*}
Applying now Lemma \ref{lem:nonline_stab} to the right hand side we
have for $\epsilon>0$,
\begin{align*}
II_a & \leq \frac18 \int_I s_0(\bhu_h,\bhu_h) ~\mbox{d}t +C\epsilon
\delta^2 \|\bldeta\|^3_{L^3(I;W^{1,3}(\Omega))} 
\\
&\qquad 
+ \frac{C}{\epsilon} 
\delta^{-1} h^{-\frac{1}{2}}\int_I   \|h^{-1} \bE
\|_{\mathcal{F}}^3 ~\mbox{d}t
 +C \int_I ( \|\bu\|_{W^{2,3}(\Omega)}+U)\|h^{-1} \bE
\|_{\mathcal{F}}^2 ~\mbox{d}t 
\\
&\leq \frac18 \int_I s_0(\bhu_h,\bhu_h) ~\mbox{d}t 
\\
&\qquad 
+C\epsilon \delta^2 \|\bldeta\|^3_{L^3(I;W^{1,3}(\Omega))} +\frac{C}{\epsilon} \underbrace{ (h^{-9/2}
h^{15/2}}_{ = h^3} + h^3)(U^3 +\|\bu\|^3_{L^3(I;W^{2,3}(\Omega))}).
\end{align*}
Here we used the bounds
\[
\delta^{-1} h^{-\frac{1}{2}}\int_I   \|h^{-1} \bE
\|_{\mathcal{F}}^3 ~\mbox{d}t \lesssim h^{-9/2}
h^{15/2} \|\bu\|^3_{L^2(I;H^{2}(\Omega))} \lesssim h^3 \|\bu\|^3_{L^3(I;W^{2,3}(\Omega))}
\]
and
\begin{align*}
\int_I ( \|\bu\|_{W^{2,3}(\Omega)}+U)\|h^{-1} \bE
\|_{\mathcal{F}}^2 ~\mbox{d}t  &\lesssim
(\|\bu\|_{L^3(I;W^{2,3}(\Omega)}) + T^{1/3} U) h^3
\|\bu\|^2_{L^{3}(I;H^2(\Omega)} 
\\
&\lesssim h^3(T U^3 +\|\bu\|^3_{L^3(I;W^{2,3}(\Omega))}).
\end{align*}
For the second term $II_b$ 
we use the fact that $\bhu_h$ is piecewise affine, add and subtract $\bu$ and then integrate by parts
\begin{align*}
II_b &= - ((\bu \cdot \nabla) (\nabla \times \bu) ,
\bE)_Q- ((\bldeta \cdot \nabla) (\nabla \times \bu) ,
\bE)_Q 
\\
&\leq \int_I \|\bu\|_{L^\infty(\Omega)}  |
\bu|_{H^2(\Omega)} \|\bE\|_{\Omega} ~\mbox{d}t + \int_I\|\bldeta\|_\Omega  \|\nabla \times
\bu\|_{L^\infty(\Omega)} \|\nabla \bE\|_{\Omega}~\mbox{d}t
\\
&\leq   \frac14 \tau_L^{-1} \|\bldeta\|_Q^2
+  h^3 (|\bu|^3_{L^3(I;H^2(\Omega))} +h \tau_L  \|\nabla \times
\bu\|^2_{L^2(I;L^\infty(\Omega))}  |\bu|^2_{L^{\infty}(I;H^2(\Omega))} )
\end{align*}
Where we used that
\[
\int_I \|\bu\|_{L^\infty(\Omega)}  |
\bu|_{H^2(\Omega)} \|\bE\|_{\Omega} ~\mbox{d}t \lesssim h \|
\bu\|^2_{L^3(I; H^2(\Omega)} h^{-1} \|\bE\|_{L^3(I;L^2(\Omega))}
\lesssim h^3 \|\bu\|^3_{L^3(I;H^2(\Omega))}
\]
and
\begin{align*}
\int_I\|\bldeta\|_\Omega  \|\nabla \times
\bu\|_{L^\infty(\Omega)} \|\nabla \bE\|_{\Omega}~\mbox{d}t
&\lesssim
\tau_L^{-1/2} \|\bldeta\|_Q \tau^{1/2}  \|\nabla \times
\bu\|_{L^2(I;L^\infty(\Omega))} h^2 |\bu|_{L^{\infty}(I;H^2(\Omega))}
\\
&\leq 
\frac14 \tau_L^{-1} \|\bldeta\|_Q^2 + C h^4 \tau_L  \|\nabla \times
\bu\|^2_{L^2(I;L^\infty(\Omega))}  |\bu|^2_{L^{\infty}(I;H^2(\Omega))} .
\end{align*}
For $II_c$ we add and subtract $\bu$ and $\pi_h \bldeta$, 
\begin{multline}
II_c = ((\nabla \bhu_h)^t \times \nabla \bldeta,
\bE)_Q = - ((\nabla \bldeta)^t \times \nabla \bldeta ,
\bE)_Q + ((\nabla \bu)^t \times \nabla \bldeta,
\bE)_Q \\
= - ((\nabla \bldeta)^t \times \nabla \bldeta ,
\bE)_Q + ((\nabla \bu)^t \times \nabla (\bldeta - \pi_h \bldeta) ,
\bE)_Q + ((\nabla \bu)^t \times  \nabla \pi_h \bldeta ,
\bE)_Q.
\end{multline}
Observing that $\bldeta - \pi_h \bldeta = \bu - \pi_h \bu$ it follows,
using H\"older's inequality, Sobolev injection and approximation, that
\begin{align*}
&((\nabla \bu)^t \times \nabla (\bldeta - \pi_h \bldeta) ,
\bE)_{\Omega} \leq \|\nabla \bu\|_{L^{4}(\Omega)} \|\nabla (\bu -
\pi_h \bu)\|_{L^{4}(\Omega)} \|\bE\|_\Omega 
\\
&\qquad \qquad \lesssim  \|\nabla
\bu\|_{L^{4}(\Omega)}^2 \|\bE\|_\Omega
\lesssim \|\bu\|^2_{W^{2,3}(\Omega)} h^3 |\bu|_{H^2(\Omega)}\leq h^3 \|\bu\|^3_{W^{2,3}(\Omega)}.
\end{align*}
Once again adding and subtracting $\pi_h \bldeta$ we have
\[
 ((\nabla \bldeta)^t \times \nabla \bldeta ,
\bE)_\Omega \leq C \|\nabla (\bu- \pi_h \bu)\|^2_{L^4(\Omega)}
\|\bE\|_\Omega + C \|\nabla \pi_h \bldeta\|^2_{L^4(\Omega)}\|\bE\|_{\Omega}).
\]
Using an inverse inequality \eqref{eq:Lpinv} and the stability of the projection $\pi_h$ we see that
\[
\|\nabla \pi_h \bldeta\|^2_{L^4(\Omega)} \leq C h^{-\frac16} \|\nabla \bldeta\|^2_{L^3(\Omega)}.
\]
Consequently
\[
\|\nabla \pi_h \bldeta\|^2_{L^4(\Omega)}\|\bE\|_{\Omega} \leq C h^{-\frac16}
\|\nabla \bldeta\|^2_{L^3(\Omega)} \|\bE\|_{\Omega} \leq \frac{1}{32}
\delta^2 \|\nabla \bldeta\|^3_{L^3(\Omega)} + \underbrace{ C h^{-\frac12} \delta^{-4} \|\bE\|_{\Omega}^3}_{*}.
\]
The term marked $(*)$ needs to scale as $O(h^3)$. Since
$\|\bE\|_{\Omega}^3 = O(h^9)$ we see that we need  $h^{-\frac12} \delta^{-4}
\leq h^{-6}$, which is satisfied for $\delta = O(h)$.
We arrive at the following bound for $II_c$,
\begin{equation*}
II_c \leq   \frac{1}{32}
\delta^2 \|\nabla \bldeta\|^3_{L^3(Q)} + C h^3 \|\nabla
\bu\|_{L^3(I;W^{2,3}(\Omega))}^3 
\end{equation*}
where we used the bound
\[
 h^{-\frac12} \delta^{-4} \|\bE\|_{\Omega}^3 \lesssim h^{-9/2} h^9
 |\bu|_{H^2(\Omega)}^3 \lesssim h^{9/2} |\bu|_{W^{2,3}(\Omega)}^3.
\]
Collecting the above bounds for the terms $I$ and $II$ and using that
$\mu \leq U h$, we see that
\begin{multline}\label{eq:convect}
(\partial_t \bldeta, \bldeta_\Pi)_Q   + ((\bldeta \cdot \nabla)
\bu,\bldeta_\Pi)_Q 
+ ((\bhu_h \cdot \nabla) \bldeta,\bldeta_\Pi)_Q +(\mu \nabla \bldeta, \nabla
\Pi \bldeta)_Q \\
\leq   \frac12\int_I \tau_L^{-1}  \|\bldeta\|_\Omega^2 ~\mbox{d}t   + \frac{1}{16} \|\mu^{\frac12}
\nabla \bldeta\|_Q^2+ \frac{1}{16}
\delta^2 \|\nabla \bldeta\|^3_{L^3(Q)} + \frac18 \int_I s_0(\bhu_h,\bhu_h)
~\mbox{d}t 
+ \mathcal{C}(\bu)^2 h^3.
\end{multline} 

\noindent {\bf 2.2. Smagorinsky nonlinear viscosity.}
We proceed to bound terms due to the
Smaginsky nonlinear viscosity, i.e. the fifth and sixth terms on the right
hand side of \eqref{eq:1st_bound}:
\[
(\hat \nu(\bu) \nabla \bu - \hat \nu(\bhu_h)
\nabla \bhu_h, \nabla \bldeta_\Pi)_Q - (\hat \nu(\bu) \nabla \bu, \nabla
\Pi \bldeta)_Q.
\]
Considering first the fourth term of the right hand side of
\eqref{eq:1st_bound} we have using \eqref{eq:holder} followed by
\eqref{eq:young} with $p=3$, $q=\tfrac32$. Then we use that $\|a
b\|_{L^{\frac32}(\Omega)} \leq \|a\|_{L^{\frac{15}{8}}(\Omega)} \|b\|_{L^{\frac{15}{2}}(\Omega)} $
\begin{align*}
&(\hat \nu(\bu) \nabla \bu - \hat \nu(\bhu_h)
\nabla \bhu_h, \nabla \bldeta_\Pi)_Q \leq  \delta^2 (2|\nabla
\bu| |\nabla \bldeta|+ |\nabla \bldeta|^2, |\nabla \bldeta_\Pi|)_Q 
\\
&\quad  \leq  \delta^2 \Bigl(\int_I \epsilon_0^{-\frac32} \|\nabla
\bu\|_{L^{\frac{15}{2}}(\Omega)}^{\frac32} \|\nabla
\bldeta_\Pi\|_{L^{\frac{15}{8}}(\Omega)}^{\frac32} ~\mbox{d}t 
+ (\epsilon_0^{3}+\epsilon_1^{\frac32})\|\nabla \bldeta\|_{L^3(Q)}^3 + \epsilon_1^{-3} \|\nabla
\bldeta_\Pi\|_{L^{3}(Q)}^3 \Bigr)
\end{align*}
for all $\epsilon_0,\, \epsilon_1 \in \mathbb{R}^+$. First observe
that using Sobolev injection,
\[
\|\nabla
\bu\|_{L^{\frac{15}{2}}(\Omega)} \|\nabla
\bldeta_\Pi\|_{L^{\frac{15}{8}}(\Omega)} \leq 
C \|\bu\|_{W^{2,3}(\Omega)} \|\nabla
\bldeta_\Pi\|_{\Omega} \leq C h \|\bu\|_{W^{2,3}(\Omega)}^2
\]
and by applying Lemma \ref{lem:lptol2},
\[
\|\nabla \bldeta_\Pi\|_{L^{3}(\Omega)} \leq \|\bu - \pi_h \bu\|_{W^{1,3}(\Omega)}
+ C h^{-\frac{1}{2}} \|\bldeta_{\Pi}\|_{1,\Omega} \leq C h (
|\bu|_{W^{2,3}(\Omega)} +  h^{-\frac{1}{2}} |\bu|_{H^{2}(\Omega)} ).
\]
Collecting these bounds and choosing $\epsilon_0$ and $\epsilon_1$
small so that
\[
(\epsilon_0^{3}+\epsilon_1^{\frac32}) \leq \frac{1}{16} 
\]
we have the bound
\begin{equation}
(\hat \nu(\bu) \nabla \bu - \hat \nu(\bhu_h)
\nabla \bhu_h, \nabla \bldeta_\Pi)_Q \leq \frac{1}{16} \delta^2
\|\nabla \bldeta\|_{L^3(Q)}^3 
+ C h^{\frac72} \|\bu\|^3_{L^3(I;W^{2,3}(\Omega))}.
\end{equation}
Now we consider the fifth term in the right hand side of \eqref{eq:1st_bound},
that quantifies the consistency error. We note that by partial integration and
Cauchy-Schwarz inequality followed by Young's inequality,
\eqref{eq:young}  and the stability of the $L^2$-projection $\bPi$, we have
\begin{align*}
- (\hat \nu(\bu) \nabla \bu, \nabla
\bPi \bldeta)_\Omega &= (\hat \nu(\bu) \nabla \bu, \nabla(\bldeta-
\bPi \bldeta))_\Omega+  (\nabla \cdot \nu(\bu) \nabla \bu,
\bldeta)_\Omega +   (\nu(\bu) \nabla \bu \cdot \bn,
t \bhu_h)_{\partial \Omega}  
\\
&\leq \frac14 \tau_L^{-1} \|\bldeta\|_\Omega^2 + C \tau_L \delta^4
\|\nabla \cdot |\nabla \bu|_F \nabla \bu\|_\Omega^2 
+Ch^3 \|\nabla \bu\|_{W^{2,3}(\Omega)}^3 
\\
&\qquad 
+ Ch^3 (\gamma_1 \max( \mu h^{-1} ,
U))^{-1}  \|\nabla \bu\|_{W^{2,3}(\Omega)}^4 + \frac{1}{16} s_1(\bhu_h,\bhu_h),
\end{align*}
where we used, that by the Cauchy-Schwarz inequality and \eqref{eq:L2proj2}
there holds
\[
 (\hat \nu(\bu) \nabla \bu, \nabla(\bldeta-
\bPi \bldeta))_\Omega \leq \delta^2 h \|\nabla
\bu\|^2_{L^4(\Omega)}|\bu|_{H^2(\Omega)} \leq \delta^2 h \|\nabla \bu\|_{W^{2,3}(\Omega)}^3 
\]
and by H\"olders inequality with $p=3/2$ and $q=3$ inequality followed
by a global trace inequality and the inverse inequality
\eqref{eq:Lpinv} on $\partial \Omega$,
\begin{align*}
(\nu(\bu) \nabla \bu \cdot \bn,
t \bhu_h)_{\partial \Omega} 
& \leq \delta^{\frac32} (\gamma_1 \max( \mu h^{-1} ,
U))^{-1}  \|\nabla \bu\|^2_{L^3(\partial \Omega)}\gamma_1
\max(\mu h^{-1} , U) \delta^{\frac12} \|t \bhu_h\|_{L^3(\partial \Omega)}
\\
&\leq C h^3 (\gamma_1 \max( \mu h^{-1} ,
U))^{-2}  \|\bu\|_{W^{2,3}(\Omega)}^4 + \frac{1}{16} s_1(\bhu_h,\bhu_h).
\end{align*}
This completes the bound of the Smagorinsky terms. Collecting the
different contributions we get
\begin{multline}\label{eq:smag_bound}
(\hat \nu(\bu) \nabla \bu - \hat \nu(\bhu_h)
\nabla \bhu_h, \nabla \bldeta_\Pi)_Q - (\hat \nu(\bu) \nabla \bu, \nabla
\bPi \bldeta)_\Omega\\
\leq \frac14 \tau_L^{-1} \|\bldeta\|_\Omega^2 + \frac{1}{16} \delta^2
\|\nabla \bldeta\|_{L^3(Q)}^3 
+ \frac{1}{16}
\int_I s_1(\bhu_h,\bhu_h) ~\mbox{d}t + \mathcal{C}(\bu)^2 h^3.
\end{multline}
\noindent {\bf 2.3. Terms related to boundary conditions and stabilization.}
These are the last three terms of the right hand side of \eqref{eq:1st_bound}.
\[
\frac12\|\bldeta_{\Pi}(0)\|_\Omega^2 +  \int_I (bc(\bldeta,\bPi \bldeta)+ s(\bhu_h,\bPi \bu)) ~\mbox{d}t.
\]
The first term, related to approximation of initial data is bounded
using approximation
\[
\|\bldeta_{\Pi}(0)\|_\Omega^2 \lesssim h^4 \|\bu(0)\|_{H^2(\Omega)}^2.
\]
For the $bc$ form related to boundary conditions we apply the second
inequality of Lemma \ref{lem:bc_stab} to obtain
\begin{align*}
\int_I (bc(\bldeta,\bPi \bldeta) &\leq C \gamma_1^{-1} \|\mu^{\frac12}
\nabla \bldeta\|_Q^2 + C \mu h^2
|\bu|_{L^{\infty}(I;H^2(\Omega))}^2 
\\
&\qquad + \frac14 \int_I s_1(\bhu_h,\bhu_h) ~\mbox{d}t +
\frac14 s_1(\bPi \bu,\bPi \bu).
\end{align*}
For the stabilization term finally we proceed using the Cauchy-Schwarz
inequality followed by the arithmetic geometric inequality
\[
\int_I s(\bhu_h,\bPi \bu)) ~\mbox{d}t \leq \frac18 \int_I
s(\bhu_h,\bhu_h) ~\mbox{d}t + 2 \int_I s(\bPi \bu,\bPi \bu) ~\mbox{d}t.
\]
Applying Lemma \ref{lem:stab_consist} we conclude that
\begin{multline}\label{eq:bc_stab_bound}
\frac12\|\bldeta_{\Pi}(0)\|_\Omega^2 +  \int_I (bc(\bldeta,\bPi
\bldeta)+ s(\bhu_h,\bPi \bu)) ~\mbox{d}t \\
\leq \frac{1}{16} \delta^2 \|\nabla \bldeta\|_{L^3(Q)}^3 + C \gamma_1^{-1} \|\mu^{\frac12}
\nabla \bldeta\|_Q^2 + \frac18 \int_I
s(\bhu_h,\bhu_h) ~\mbox{d}t  + \frac14 \int_I s_1(\bhu_h,\bhu_h)
~\mbox{d}t + \mathcal{C}(\bu)^2 h^3.
\end{multline}
\paragraph{{\bf Step 3. Application of scale separation argument}.}
Collecting the above bounds \eqref{eq:convect}, \eqref{eq:smag_bound} and \eqref{eq:bc_stab_bound} and applying them to \eqref{eq:1st_bound} we get, for
$\gamma_1$ sufficiently large 
\begin{multline}\label{eq:before_Gronwall}
\frac12\|\bldeta(T)\|_\Omega^2 + \frac18 \|\mu^{\frac12} \nabla \bldeta \|_Q^2 +\frac{1}{16} \delta^2
\|\nabla \bldeta(t)\|_{L^3(Q)}^3 + \frac{1}{16} \int_I s(\bhu_h,\bhu_h) ~\mbox{d}t\\
\leq \tau_L^{-1}  \|\bldeta\|_Q^2 ~\mbox{d}t - ((\bldeta \cdot \nabla)
\bu,\bldeta)_Q  + \mathcal{C}(\bu)^2 h^3.
\end{multline}
A naive application of Gronwall's lemma at this stage results in exponential growth
with a coefficient proportional to the maximum of the velocity gradient.
Instead recall the inequality proven in Lemma \ref{lem:conv_term}, 
\[
 ((\bldeta\cdot \nabla ) \bu, \bldeta)_{Q} \leq   
 \frac{\epsilon}{ 2}\|\mu^{\frac12} \nabla \bldeta\|_{Q}^2
 + \frac{\epsilon}{ 2} \nu(\delta) \|\nabla
 \bldeta\|_{L^3(Q)}^{3} + \|(\epsilon^{-1} \tau^{-1}_L+|\nabla \bar \bu|_F)^{\frac12} \bldeta\|^2_{Q}.
\]
Applying this inequality, with $\epsilon=1/16$ in the right hand side
of \eqref{eq:before_Gronwall} we obtain
\begin{equation}\label{eq:last_bound}
\|\bldeta(T)\|_\Omega^2 + \frac18 \mu \|\nabla \bldeta\|_{Q}^2+ \frac{1}{16} \delta^2
\|\nabla \bldeta\|_{L^3(Q)}^3 + \frac18\int_I s(\bhu_h,\bhu_h) ~\mbox{d}t 
\leq
%
 2 \int_I (17 \tau_L^{-1} + |\nabla \bar \bu|_F)
 \|\bldeta\|_\Omega^2 ~\mbox{d}t +\mathcal{C}(\bu)^2 h^{3}
\end{equation}
where 
\begin{multline*}
\mathcal{C}(\bu)^2 \lesssim U | \bu|_{L^2(I;H^{2}(\Omega))}^2 +U^3 +
h (\tau_L \|\nabla \times \bu\|_{L^2(I;L^\infty(\Omega))}
+1)|\bu|^2_{L^\infty(I;H^2(\Omega))} \\
+
\|\bu\|^3_{L^3(I;W^{2,3}(\Omega))}+\|\bu\|^4_{L^4(I;W^{2,3}(\Omega))}+ h
\|\nabla \cdot |\nabla \bu|_F \nabla \bu\|^2_Q.
\end{multline*}
Assuming $U$ bounded by $\|\bu\|_{L^3(I;W^{2,3}(\Omega))}$, we can absorb the first two terms in the
right hand side in the $\|\bu\|^3_{L^3(I;W^{2,3}(\Omega))}$ term.
\paragraph{{\bf Step 4.  Application of Gronwall's Lemma}.}
Applying Gronwall's inequality on integral form \cite{bellman1943} to \eqref{eq:last_bound} we see that
for all $t \in I$
\[
\|\bldeta(t)\|_\Omega^2 \leq e^{36 (t/\tau_L)} \mathcal{C}(\bu)^2 h^{3}
\]
and as a consequence, using \eqref{eq:last_bound},
\begin{align*}
\frac18 \mu \|\nabla \bldeta\|_{Q}^2+ \frac{1}{16} \delta^2
\|\nabla \bldeta\|_{L^3(Q)}^3 + \frac18\int_I s(\bhu_h,\bhu_h) ~\mbox{d}t  
&\leq 2 \int_I (17 \tau^{-1}_L + \|\nabla \bar
\bu\|_{L^\infty(\Omega)}) e^{36 (t/\tau_L)} ~\mbox{d}t\,\mathcal{C}(\bu)^2 h^{3}
\\
&\leq C T \tau^{-1}_L  e^{36(T/\tau_L)} \mathcal{C}(\bu)^2h^{3}
\end{align*}
leading to the bound,
\begin{align}
&\sup_{t \in I} \|(\bu - \bhu_h)(t)\|^2_\Omega + \frac18 T^{-1} \tau_L \mu \|\nabla (\bu - \bhu_h)\|_{Q}^2
\\ \nonumber
&\qquad + T^{-1} \tau_L \delta^2 \|\nabla (\bu -
\bhu_h)\|^{3}_{L^3(Q)} + T^{-1} \tau_L \int_I
s(\bhu_h,\bhu_h)~\mbox{d}t 
\lesssim 
e^{36(T/\tau_L)}\mathcal{C}(\bu)^2 h^{3}.
\end{align}
This concludes the proof.
\end{proof}
\begin{corollary}\label{cor:inftybound}
Under the same assumption as for Theorem \ref{thm:error_bound}, the solution to \eqref{eq:FEM} satisfies the following bound
\[
\|\bhu_h \|_{L^\infty(Q)} \lesssim \mathcal{C}(\bu)(1+ e^{18 (T/\tau_L)}).
\]
\begin{proof}
Observe that under the regularity assumptions of Theorem \ref{thm:error_bound} $\bu \in
L^\infty(Q)$. It then follows that
\[
\|\bhu_h\|_{L^\infty(Q)} \leq \|\bhu_h -\pi_h \bu\|_{L^\infty(Q)} + C
\|\bu\|_{L^\infty(Q)} \leq C h^{-\frac{d}{2}} \|\bhu_h -\pi_h \bu\|_{L^\infty(I;L^2(\Omega))}+ C
\|\bu\|_{L^\infty(Q)}.
\]
The claim follows using that $\|\bu\|_{L^\infty(Q)} \lesssim
\|\bu\|_{L^\infty(I;H^2(\Omega))}$ and
\begin{align*}
\|\bhu_h -\pi_h \bu\|_{L^\infty(I;L^2(\Omega))} &\leq \|\bhu_h -
\bu\|_{L^\infty(I;L^2(\Omega))} + \|\bu -\pi_h
\bu\|_{L^\infty(I;L^2(\Omega))} 
\\
&\lesssim \mathcal{C}(\bu)  e^{9 (T/\tau_L)} h^{\frac32} + h^2 |\bu|_{L^\infty (I; H^2 (\Omega))}.
\end{align*}
\end{proof}
\end{corollary}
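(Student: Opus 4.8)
The plan is to exploit that $\bhu_h$, while not regular, is a finite element function, so one may pass from an $L^2$ bound to an $L^\infty$ bound via an inverse inequality at the cost of a factor $h^{-d/2}$, and this cost is compensated \emph{exactly} by the $O(h^{3/2})$ rate of Theorem \ref{thm:error_bound} when $d=3$. First I would insert a good finite element approximation of $\bu$ and split
\[
\|\bhu_h\|_{L^\infty(Q)} \leq \|\bhu_h - \pi_h \bu\|_{L^\infty(Q)} + \|\pi_h \bu\|_{L^\infty(Q)},
\]
with $\pi_h$ the $L^2$-orthogonal projection onto $\bV_{0h}$. The second term is harmless: using the $L^\infty$-stability of $\pi_h$ on quasi-uniform meshes together with the Sobolev embedding $H^2(\Omega)\hookrightarrow C^0(\bar\Omega)$, valid since $\Omega\subset\mathbb{R}^3$, one gets $\|\pi_h \bu\|_{L^\infty(Q)}\lesssim \|\bu\|_{L^\infty(Q)}\lesssim \|\bu\|_{L^\infty(I;H^2(\Omega))}\lesssim\mathcal{C}(\bu)$, the last step being immediate from the definition \eqref{eq:reg_const} of $\mathcal{C}(\bu)$.

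For the first term, since $\bhu_h(t)-\pi_h\bu(t)\in\bV_h$ for every $t$, the inverse inequality \eqref{eq:Lpinv} (with $l=0$, $p=\infty$, $q=2$) gives $\|\bhu_h(t)-\pi_h\bu(t)\|_{L^\infty(\Omega)}\lesssim h^{-d/2}\|\bhu_h(t)-\pi_h\bu(t)\|_\Omega$; taking the supremum in $t$ and inserting a triangle inequality yields
\[
\|\bhu_h - \pi_h \bu\|_{L^\infty(Q)} \lesssim h^{-d/2}\left( \sup_{t\in I}\|(\bu-\bhu_h)(t)\|_\Omega + \|\bu - \pi_h \bu\|_{L^\infty(I;L^2(\Omega))}\right).
\]
The first contribution inside the bracket is controlled directly by Theorem \ref{thm:error_bound}, namely $\sup_{t\in I}\|(\bu-\bhu_h)(t)\|_\Omega\lesssim\mathcal{C}(\bu)e^{18(T/\tau_L)}h^{3/2}$, and the second by the standard $L^2$ approximation property of $\pi_h$, $\|\bu-\pi_h\bu\|_{L^\infty(I;L^2(\Omega))}\lesssim h^2|\bu|_{L^\infty(I;H^2(\Omega))}\lesssim h^2\mathcal{C}(\bu)$.

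Combining, with $d=3$ so that $h^{-d/2}h^{3/2}=1$ and $h^{-d/2}h^2=h^{1/2}\leq 1$ for $h$ bounded, I obtain
\[
\|\bhu_h\|_{L^\infty(Q)} \lesssim \mathcal{C}(\bu)e^{18(T/\tau_L)} + h^{1/2}\mathcal{C}(\bu) + \mathcal{C}(\bu) \lesssim \mathcal{C}(\bu)\left(1 + e^{18(T/\tau_L)}\right),
\]
which is the claim. There is essentially no hard step: the only point requiring care is that the power in the inverse inequality and the convergence rate of Theorem \ref{thm:error_bound} match exactly in three dimensions — had the $L^2$ rate been only $O(h)$, this argument would produce an inadmissible factor $h^{-1/2}$ — so the corollary is really a statement about the sharpness of the $O(h^{3/2})$ estimate, and in two dimensions (inverse cost $h^{-1}$) the bound even improves by a factor $h^{1/2}$.
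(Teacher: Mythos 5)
Your proof is correct and follows essentially the same route as the paper: split off $\pi_h\bu$, control it by $L^\infty$-stability and Sobolev embedding, and pass from $L^2$ to $L^\infty$ on the discrete part via the inverse inequality $h^{-d/2}$, which is exactly cancelled by the $O(h^{3/2})$ rate of Theorem \ref{thm:error_bound} in three dimensions. Your closing remark on the exact matching of the inverse-inequality power with the convergence rate is the same mechanism the paper relies on, just made explicit.
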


\section{Numerical examples}

In the numerical examples below we use the minimal compatible
element of \cite{CH18} with piecewise affine velocity and piecewise
constant pressure  on macro elements (see \cite{BCH20} for implementation details and application to linear incompressible problems), together with the backwards differentiation formula BDF2 for time stepping.
On each time step we use a linearized formula, using the solution from the previous time step, and solve only once.
We use only Smagorinsky stabilization, i.e., we set $\gamma_0=\gamma_1=0$ in (\ref{eq:FEM}). The Smagorinsky term is
set by $\nu := \gamma\, \vert T\vert  \, |\nabla \bu|_F$ where $\vert T\vert$ is the element area and $\gamma$ controls the amount of dissipation in the model.

\subsection{Shear layers}

We shall first consider an example using the Euler equations, i.e., $\mu=0$ in (\ref{eq:NS_smag}), the double shear layer problem \cite{BCG89}. 
The computational domain is $\Omega) =(0,2\pi)\times(0,2\pi)$ and initial conditions
\begin{equation}
u_y(\bx,0) = \delta \sin{(x)}, \quad u_x(\bx,0) = \left\{\begin{array}{>\displaystyle{l}} \tanh{((y-\pi/2)/\rho)},\quad y\leq \pi\\[3mm]  \tanh{((3\pi/2-y)/\rho)},\quad y> \pi \end{array}\right. 
\end{equation}
which gives two horizontal shear layers perturbed by a small vertical velocity. We take $\rho=\pi/15$ and $\delta=0.05$ and apply periodic boundary conditions.
The problem is solved on a Union Jack (macro element) mesh of $100\times 100$ (boundary) nodes using a timestep size $k=1/100$.

In Figs. \ref{fig:vortnostab}--\ref{fig:vortminus2} we show the effect of dissipation on vorticity  by increasing $\gamma$ from zero to $10^{-1}$.
Th unstabilized solution shows oscillations in the solution which worsen with time, whereas the choice $\gamma=10^{-1}$ compares well with \cite{BCG89}.

\subsection{Vortex shedding} The second example shows the effect of dissipation on von Karman vortex shedding around a cylider.
We used $\mu = 3\times 10^{-4}$ (which is close to the limit for stable solutions with $\gamma=0$ on this mesh). The outer domain is 
$(-1/2,2)\times (-1/,1/2)$, and the cylinder has center at the origin and radius $r=1/10$. The boundary conditions are $\bu = \bf 0$ at $y=\pm 1/2$ and at the cylinder, homogeneous Neumann conditions at $x =2$, and $\bu =(3/2-6 y^2,0)$ at $x=-1/2$. The initial conditions correspond to the stationary Stokes solution,
and the timestep size $k=1/100$. The computational (macro) mesh is shown in Fig. \ref{fig:meshvort}, and the solution at time $t=10$ is shown in Fig. \ref{fig:vorticity} for increasing $\gamma$. We note that too much dissipation severely affects the shape of the vortex street, whereas smaller amounts of dissipation only serve to stabilize the solution. 

In Fig. \ref{fig:velunstab} we show that the method can be unstable for high Reynolds numbers. We used $\mu=10^{-6}$ and show the instability evolving for $t=0.15$, $t=0.2$ and $t=0.3$ with $\gamma=0$. In Fig. \ref{fig:velstab} we show the corresponding stabilized solution with $\gamma=10^{-1}$. The velocities are shown in the nodes of the macro mesh. Finally, in Fig. \ref{fig:vort1e-6} we show the relative streamlines at time $t=10$ for the stabilized model.

\section{Conclusion}
We have considered a p-Laplacian Smagorinsky model for high
Reynolds flow problems. We showed using a scale separation
argument that the Smagorinsky model has stability properties that only
depend on the large scales of the flow, provided a spectral gap exists
for the particular flow configuration. The set of
non-essential fine scales grows as the perturbation due to the
Smagorinsky model increases, hence moderating the exponential growth.
In a second part we considered the Smagorinsky model as a stabilizing
term in a low order finite element method and we showed that the resulting
stabilized method has optimal properties for smooth solutions in the
laminar high Reynolds number regime. 
The stabilized finite element method also inherits the reduced exponential growth through scale
separation from the continuous case. 

To the best of our knowledge
these results are the first that give quantitative evidence that the
Smagorinsky model enhances both the accuracy and the stability for computations of high Reynolds number flows.
 
\section*{Appendix}
\subsection*{Proof of Lemma \ref{lem:bc_stab}}
The first inequality is a classical bound for Nitsche's method. It
follows using the Cauchy-Schwarz inequality, followed by
\eqref{eq:young} with $p=q=2$, the trace
inequality \eqref{trace_H1} on each element face subset of $\partial
\Omega$ together with \eqref{inverse}
\begin{multline*}
bc(\bv_h, \bv_h) \leq 2 \gamma_1^{-1} h \|\mu^{\frac12} \nabla
\bv_h \|^2_{\partial \Omega}  + \frac18 \gamma_1 \max(\mu h^{-1},U)
\|t \bhu_h\|^2_{\partial \Omega} \\
\leq  C  \gamma_1^{-1} \|\mu^{\frac12} \nabla
\bv_h\|^2_{\Omega}+ \frac18 \gamma_1 \max(\mu h^{-1},U) \|t \bhu_h\|^2_{\partial
  \Omega}.
\end{multline*}
The second inequality follows from this result by adding and
subtracting the projection $\pi_h \bv$,
\begin{equation*}
bc(\bv, \bv_h)  = bc(\pi_h \bv, \bv_h) + bc(\bv - \pi_h \bv, \bv_h)= I
+ II.
\end{equation*}
Using the same previous inequality followed by the $H^1$-stability of $\pi_h$ we have
\[
I \leq C  \gamma_1^{-1} \|\mu^{\frac12} \nabla\bv\|^2_{\Omega}+ \frac18 \gamma_1 \max(\mu h^{-1},U) \|t \bv_h\|^2_{\partial
  \Omega}.
\]
Similar arguments as before also show that
\[
II \leq C  \gamma_1^{-1} (\|\mu^{\frac12} \nabla
(\bv - \pi_h \bv)\|^2_{\Omega} + h^2 \mu \|\bv\|_{H^2(\Omega)}^2+ \frac18 \gamma_1 \max(\mu h^{-1},U) \|t \bv_h\|^2_{\partial
  \Omega}.
\]
We conclude by using approximation in the first term of the right hand
of this inequality
side and then sum the bounds for $I$ and $II$.

\section*{Acknowledgement}
EB was partially supported by the EPSRC grants  EP/P01576X/1 and EP/T033126/1. PH was partially supported by the Swedish Research
Council Grant No.\  2018-05262. ML was partially suported by the Swedish Research
Council Grant No.\  2017-03911 and the Swedish Research Programme
Essence.
On behalf of all authors, the corresponding author states that there is no conflict of interest. 
\bibliographystyle{abbrv}
\bibliography{references}
\newpage
\begin{figure}[ht]
	\begin{center}
		\includegraphics[scale=0.15]{./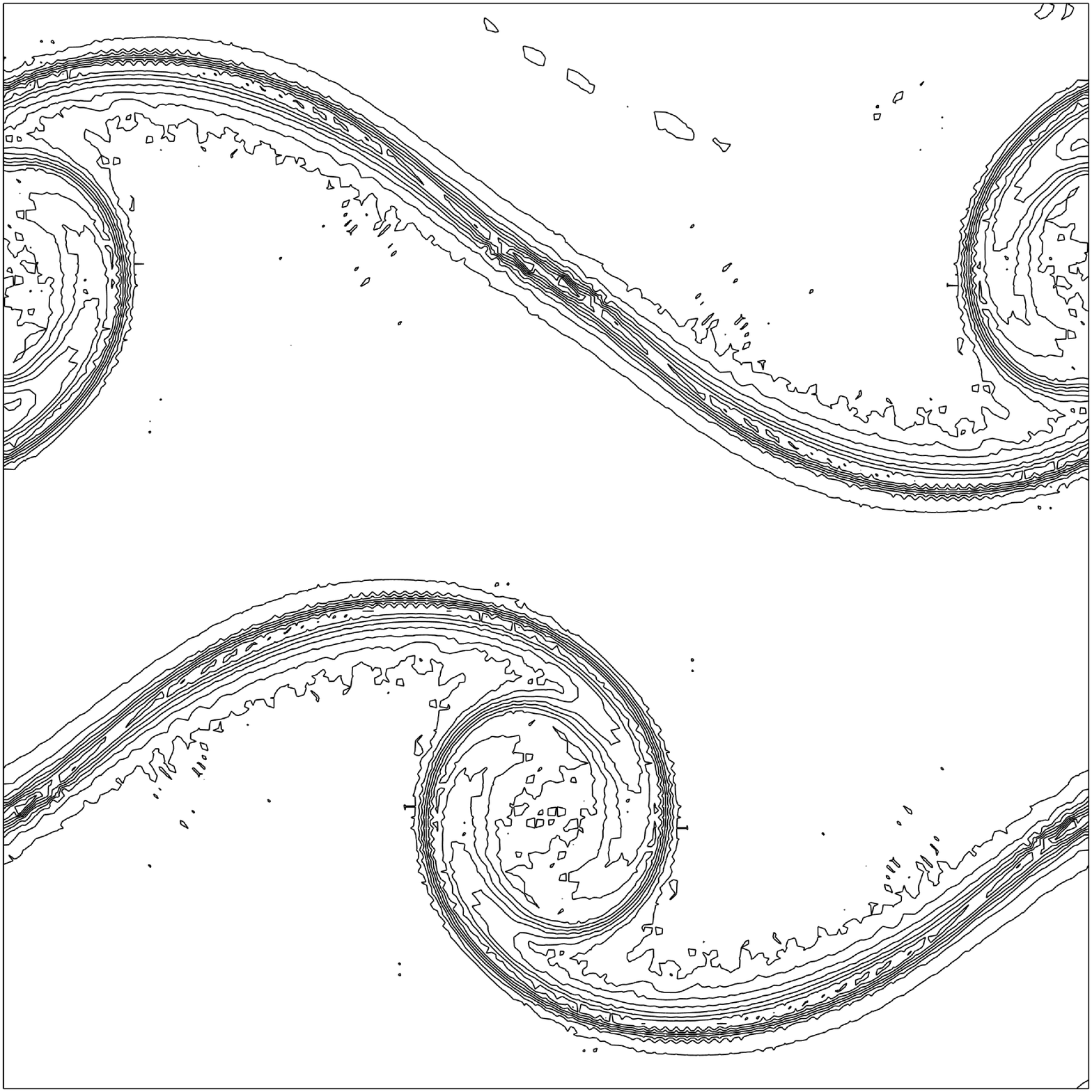}\includegraphics[scale=0.15]{./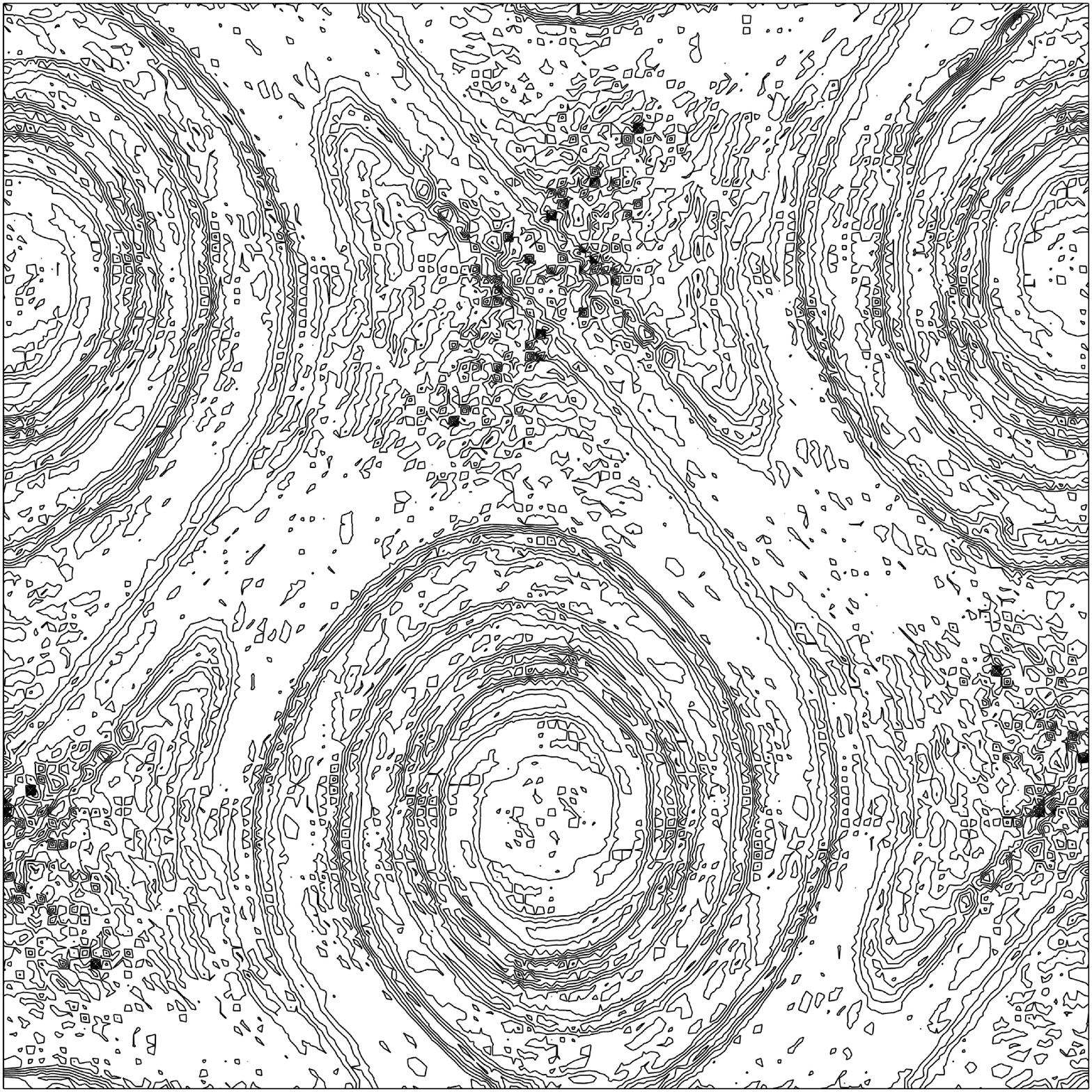}
	\end{center}
	\caption{Vorticity contours for the double shear layer at time $t=6$ and $t=12$, $\gamma=0$.}
	\label{fig:vortnostab}
\end{figure}
\begin{figure}[ht]
	\begin{center}
		\includegraphics[scale=0.15]{./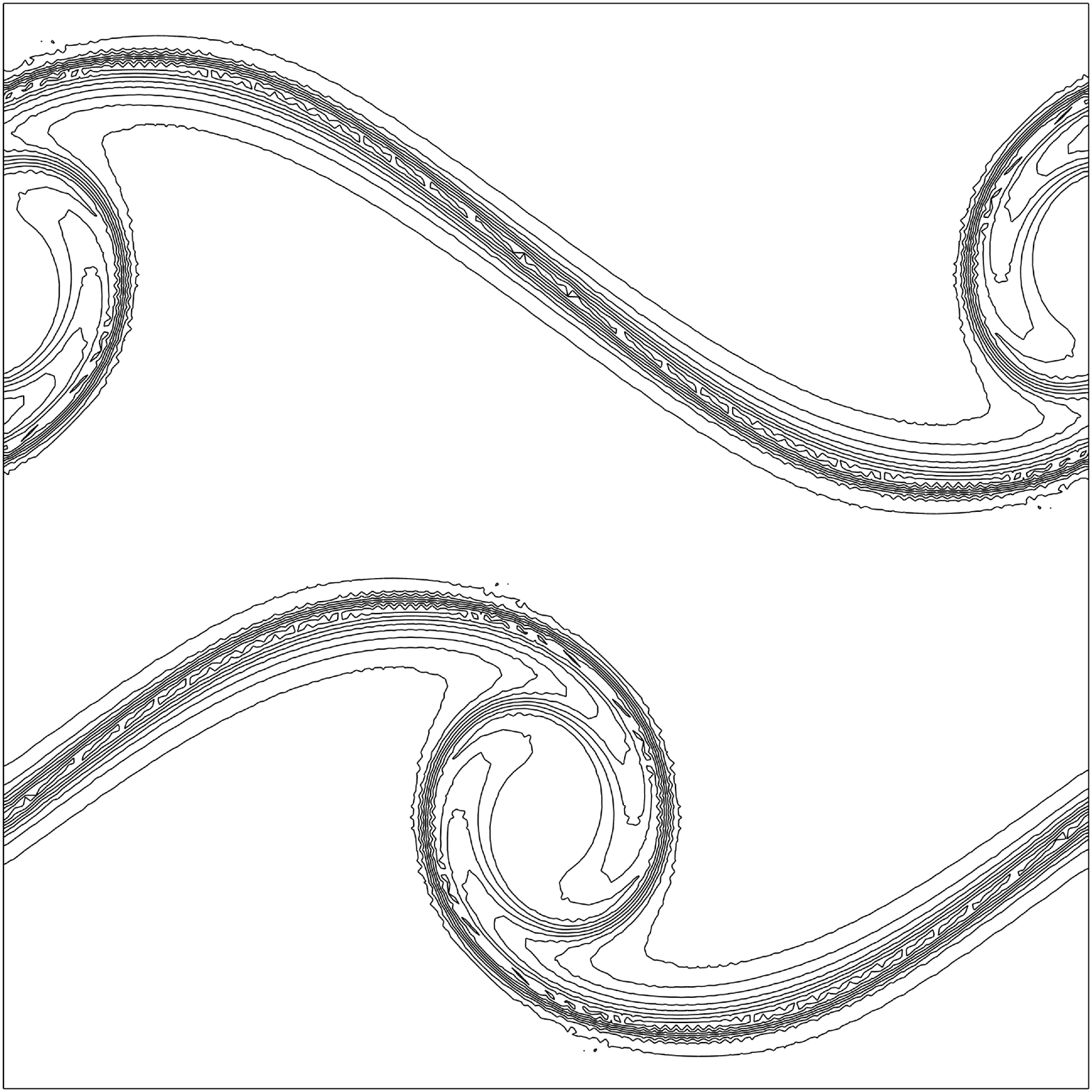}\includegraphics[scale=0.15]{./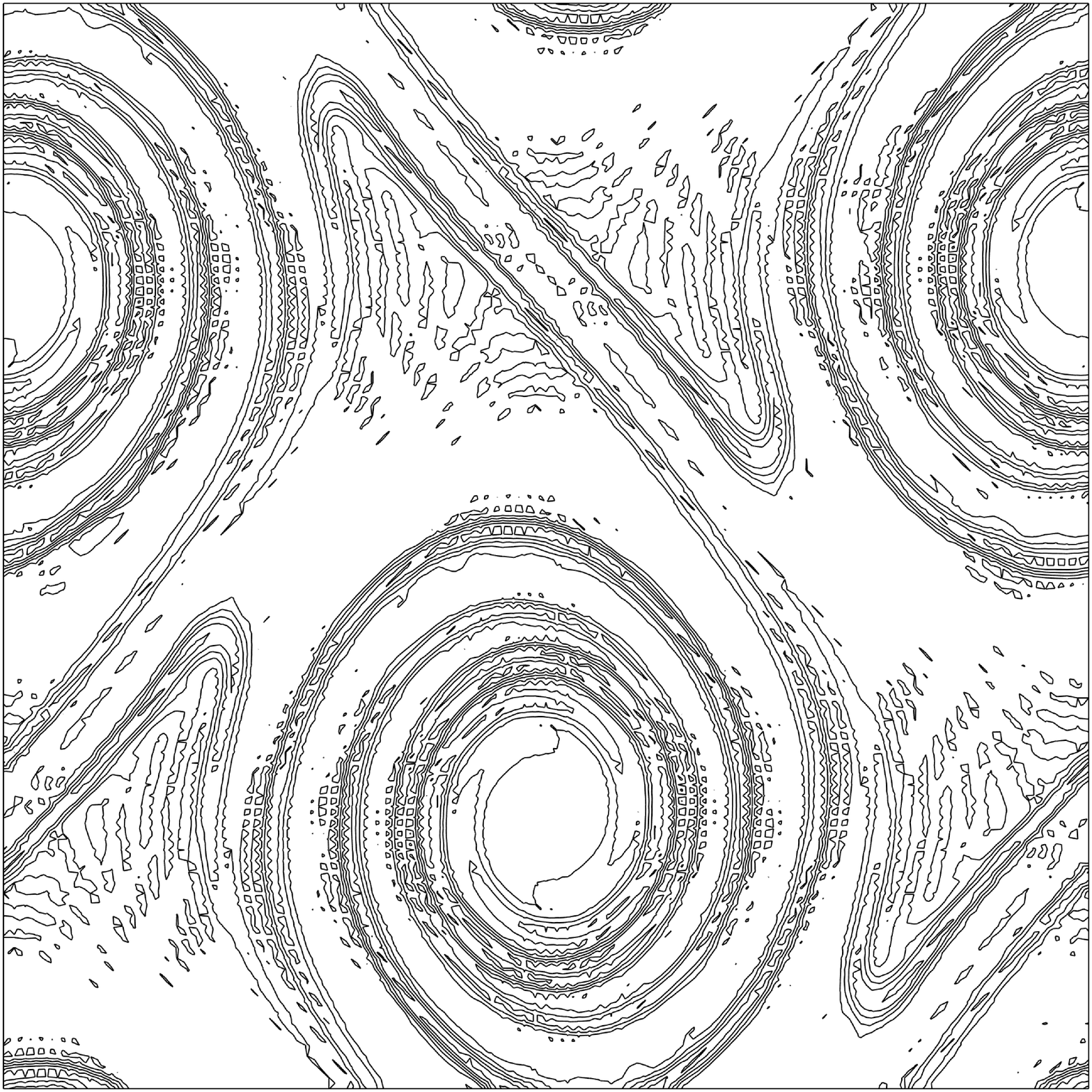}
	\end{center}
	\caption{Vorticity contours for the double shear layer at time $t=6$ and $t=12$, $\gamma=10^{-2}$.}
	\label{fig:vortminus1}
\end{figure}

\begin{figure}[ht]
	\begin{center}
		\includegraphics[scale=0.15]{./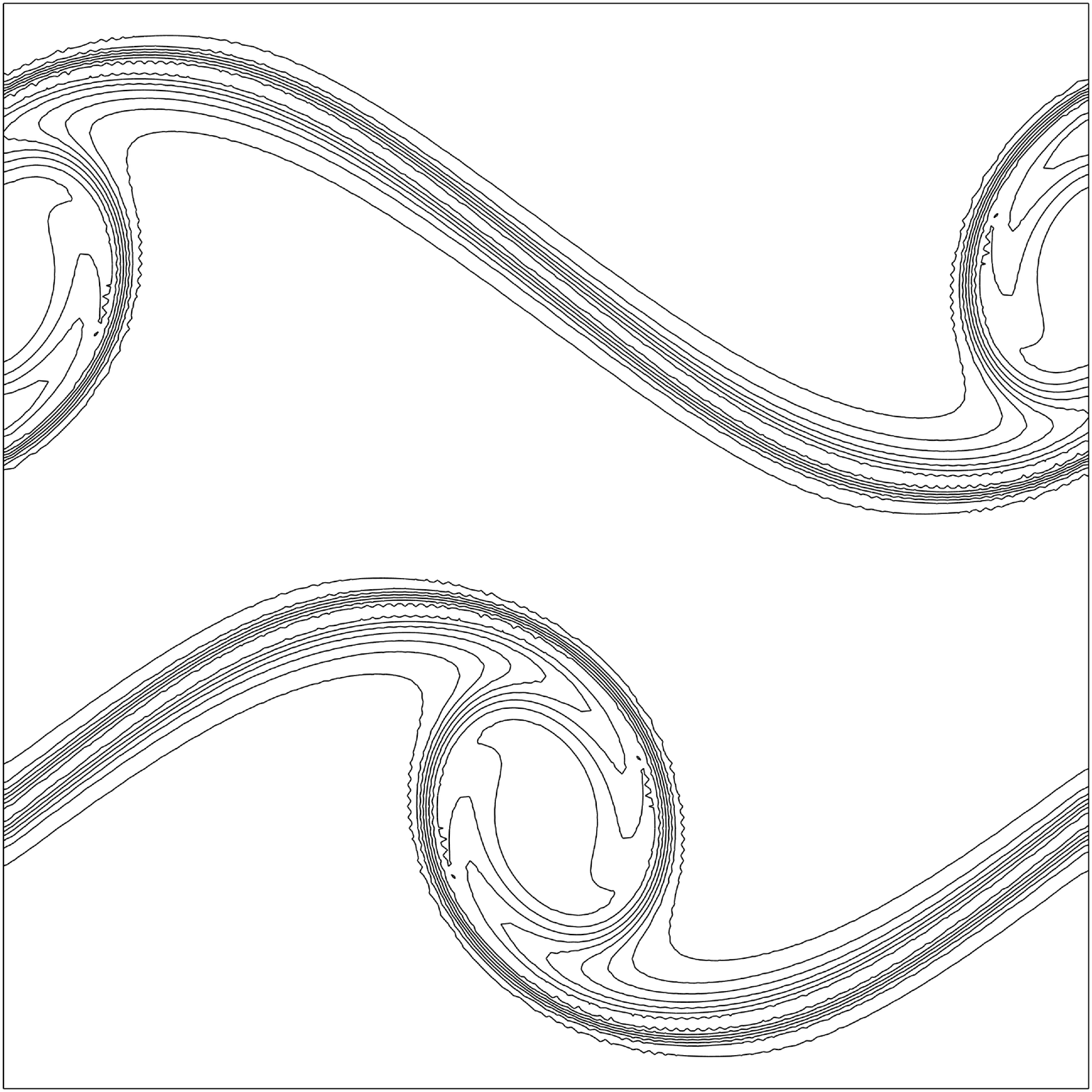}\includegraphics[scale=0.15]{./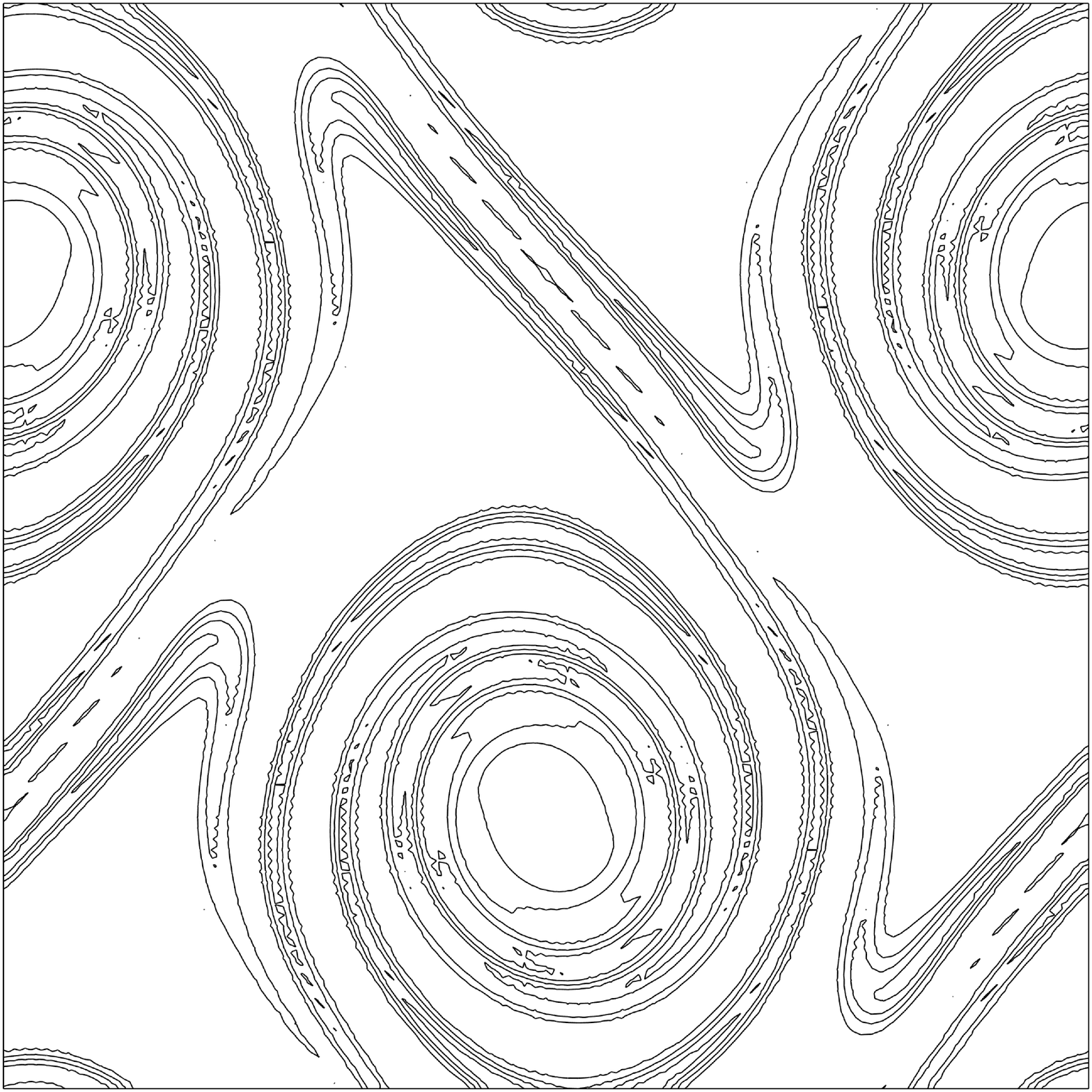}
	\end{center}
	\caption{Vorticity contours for the double shear layer at time $t=6$ and $t=12$, $\gamma=10^{-1}$.}
	\label{fig:vortminus2}
\end{figure}

\begin{figure}[ht]
	\begin{center}
		\includegraphics[scale=0.35]{./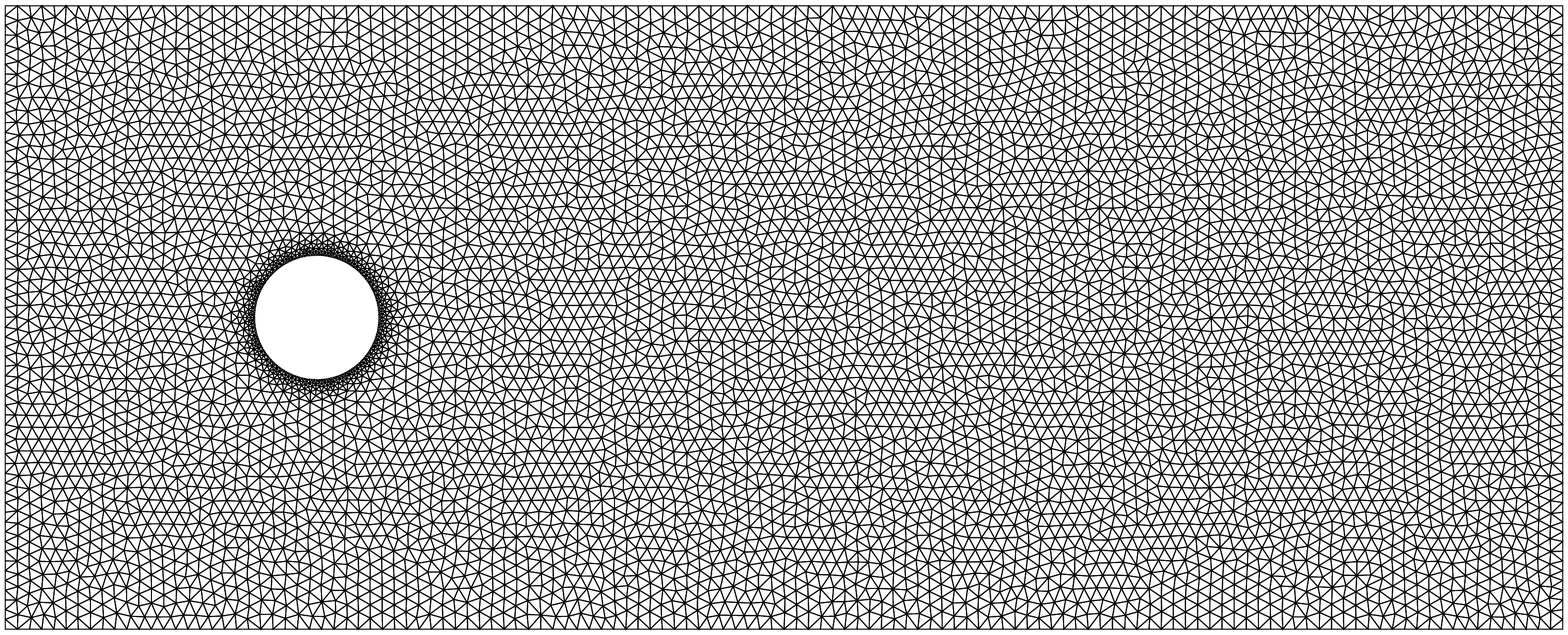}
	\end{center}
	\caption{Mesh used for vortex shedding computation.}
	\label{fig:meshvort}
\end{figure}

\begin{figure}[ht]
\centering
  \begin{tabular}{@{}c@{}}
   \includegraphics[scale=0.2]{./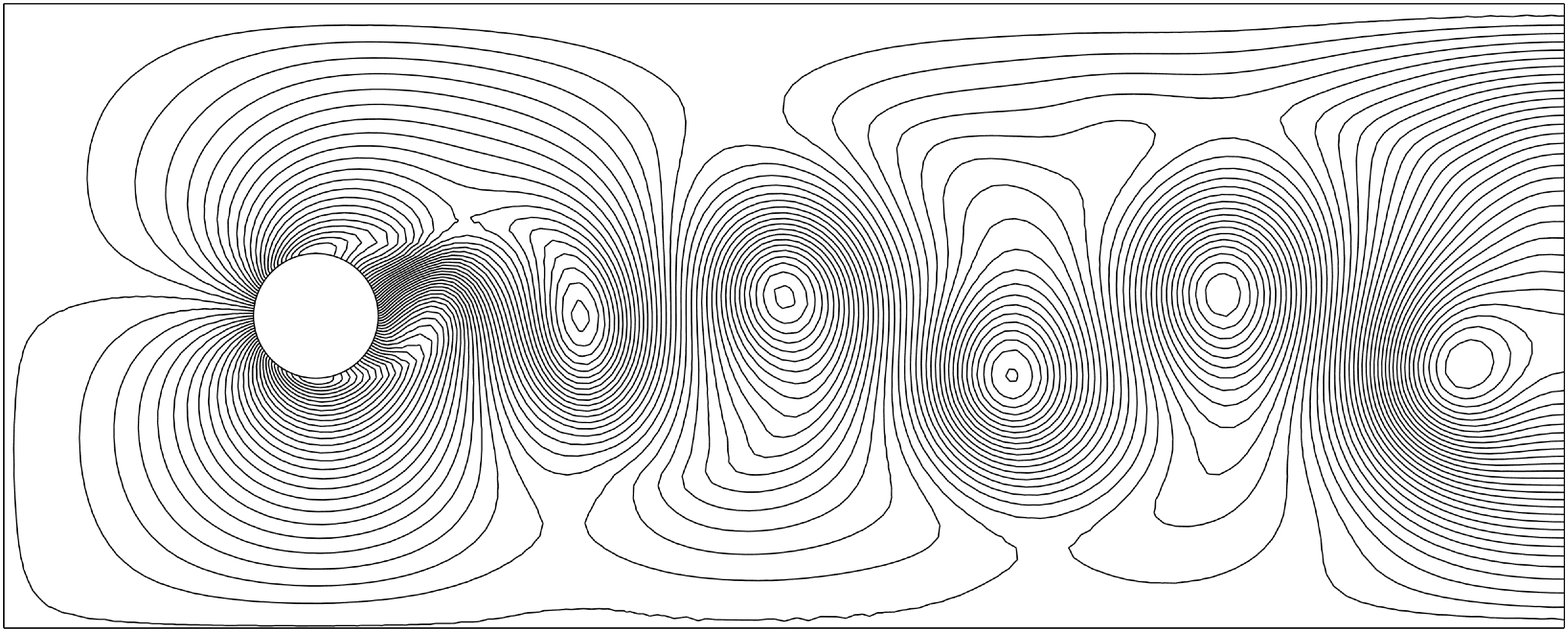}\\
  \end{tabular}
\vspace{-3cm}

  \begin{tabular}{@{}c@{}}
   \includegraphics[scale=0.2]{./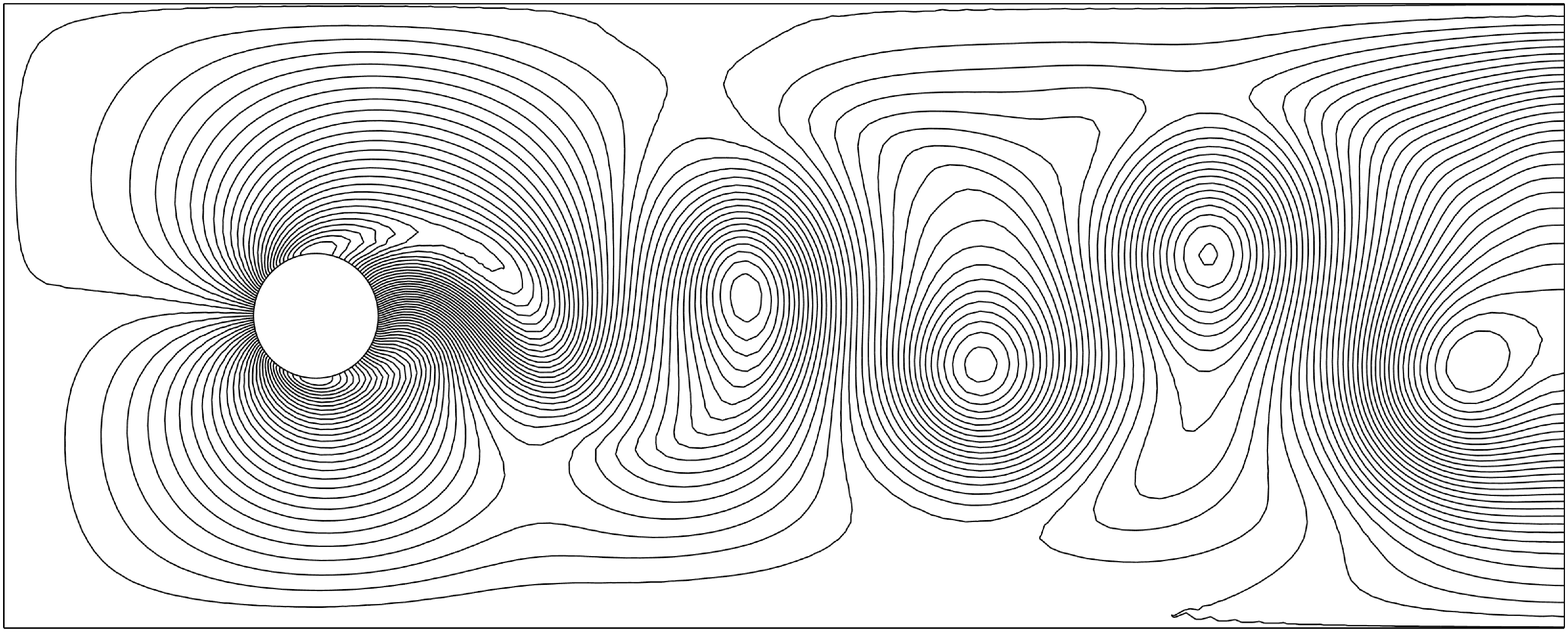}\\
  \end{tabular}

    \vspace{-3cm}

  \begin{tabular}{@{}c@{}}
   \includegraphics[scale=0.2]{./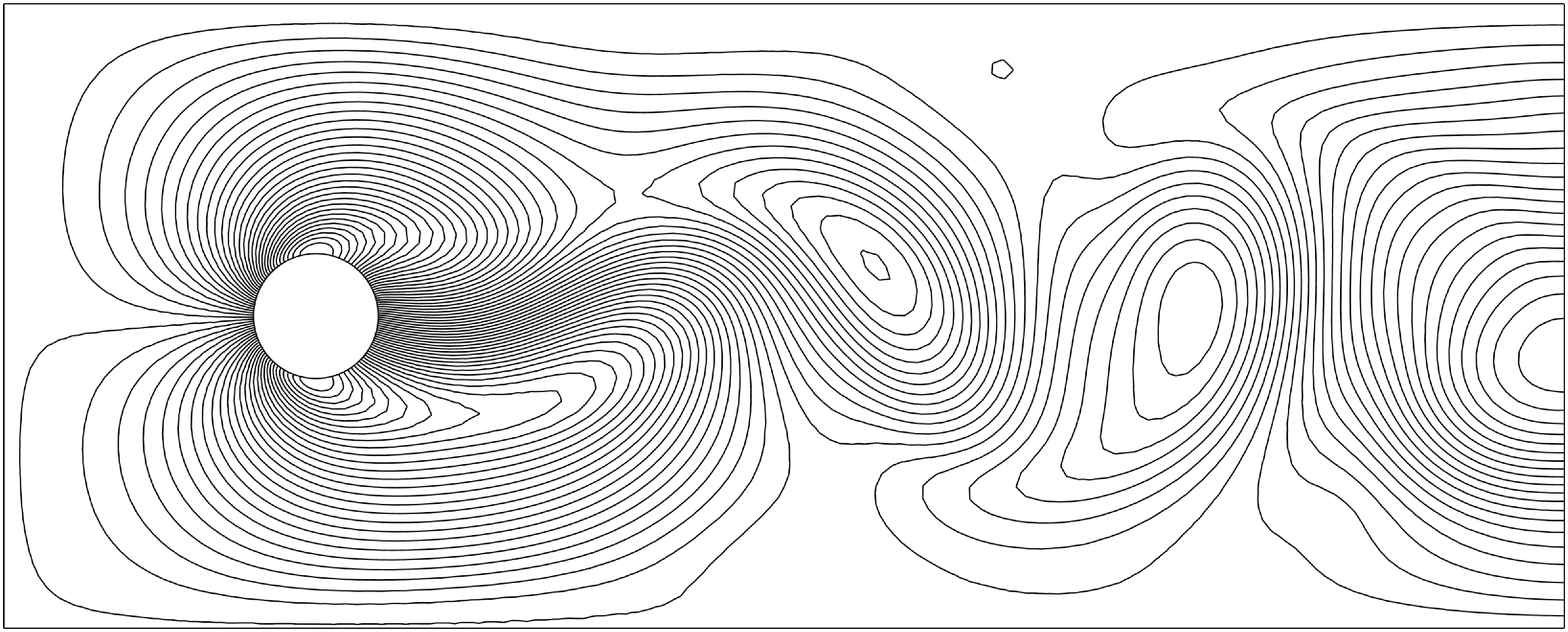}\\
  \end{tabular}
		
	\caption{Relative streamlines at time $t=10$, $\gamma=0$,$\gamma=10^{-1}$, $\gamma=1$ from top.}
	\label{fig:vorticity}
\end{figure}

 \begin{figure}[ht]
 \begin{tabular}{@{}c@{}}
   \includegraphics[scale=0.2]{./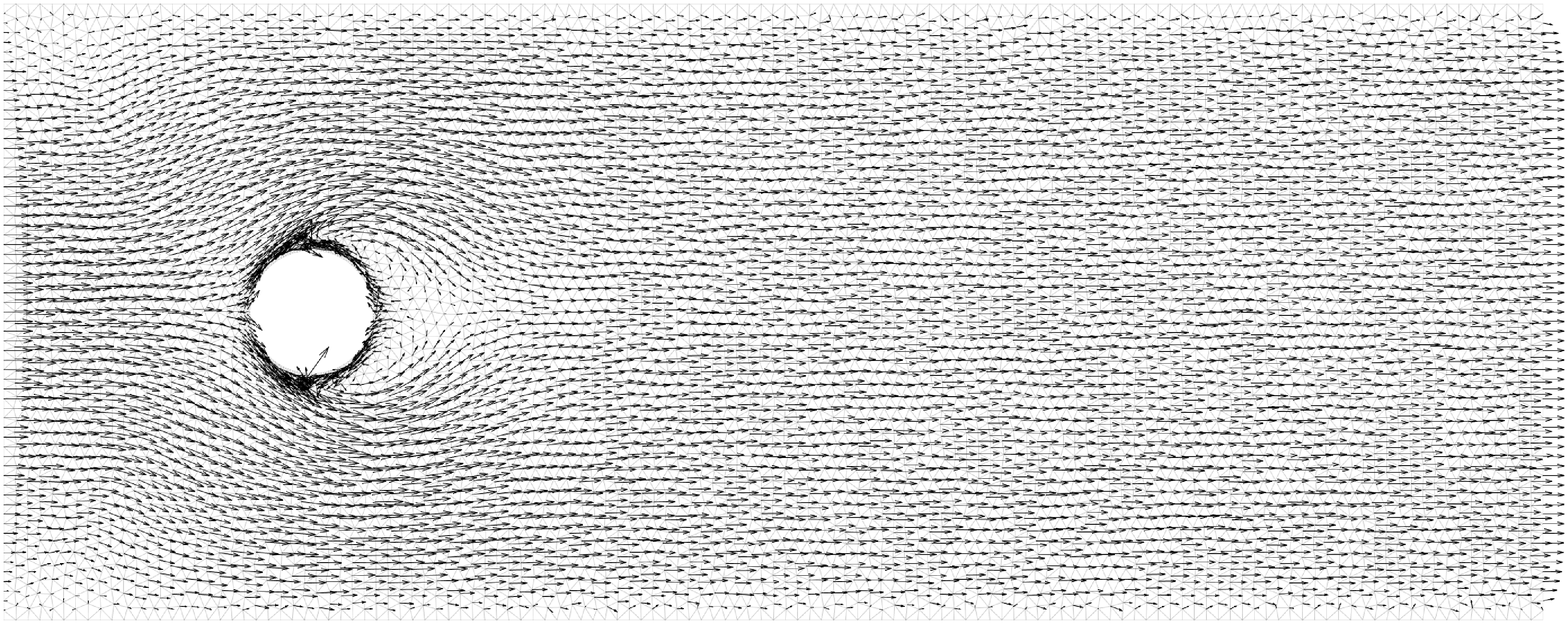}\\
  \end{tabular}
  
\vspace{-1.5cm}

 \begin{tabular}{@{}c@{}}
   \includegraphics[scale=0.2]{./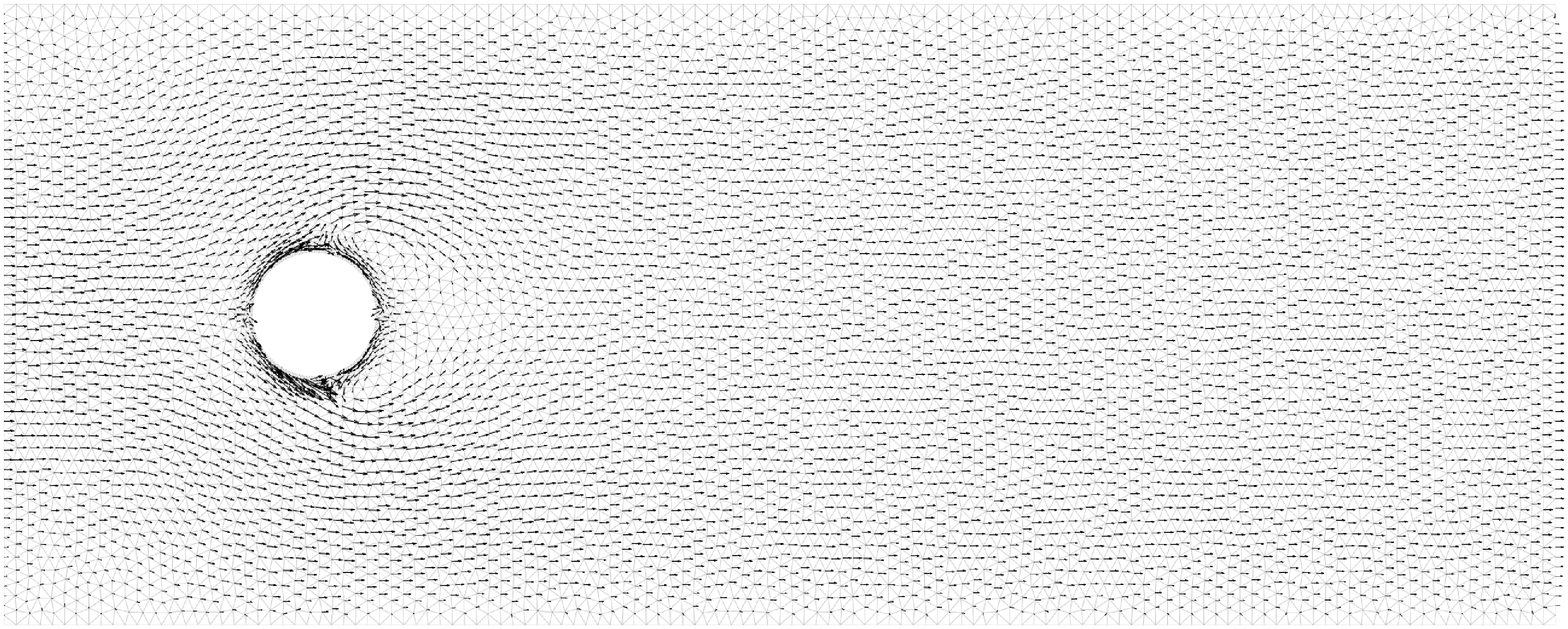}\\
  \end{tabular}
  
\vspace{-2cm}

 \begin{tabular}{@{}c@{}}
   \includegraphics[scale=0.2]{./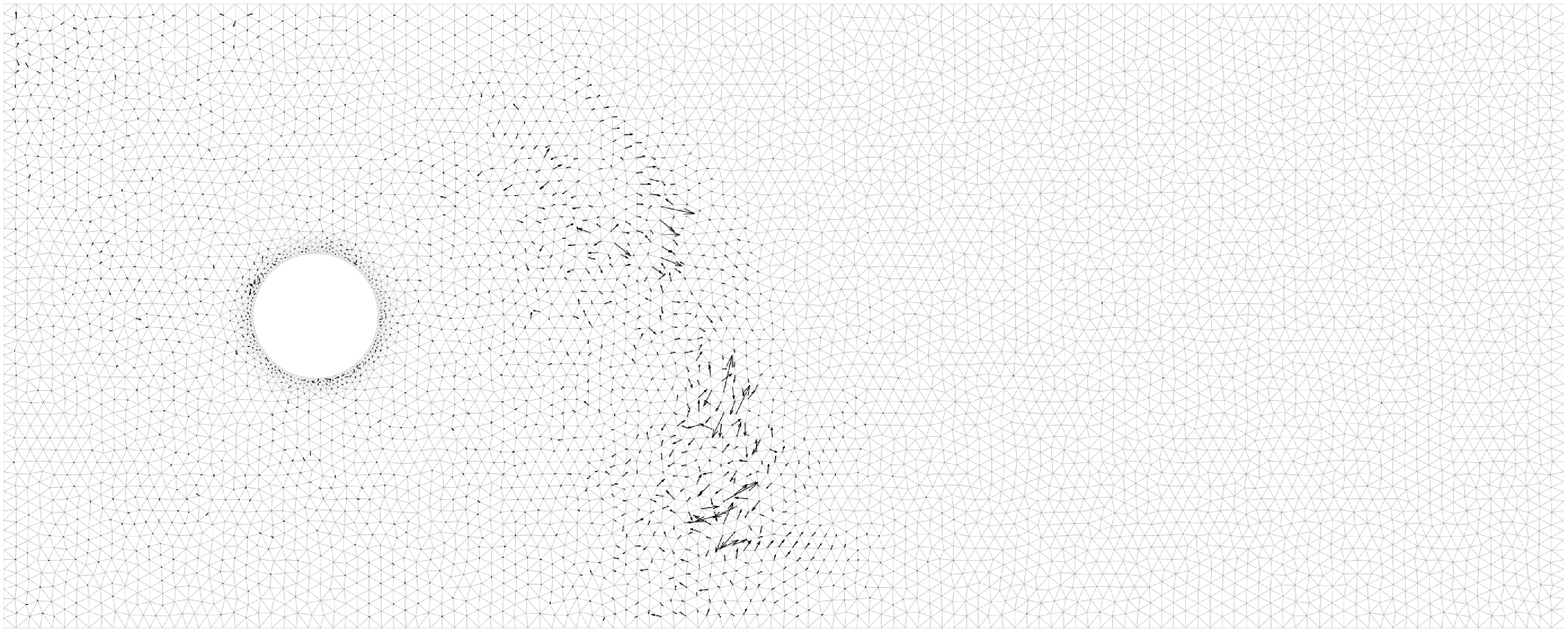}\\
  \end{tabular}
		
	\caption{Velocities after 15, 20, and 30 timesteps, from top, $\mu=10^{-6}$, $\gamma = 0$.}
	\label{fig:velunstab}
\end{figure}
 \begin{figure}[ht]
 \begin{tabular}{@{}c@{}}
   \includegraphics[scale=0.2]{./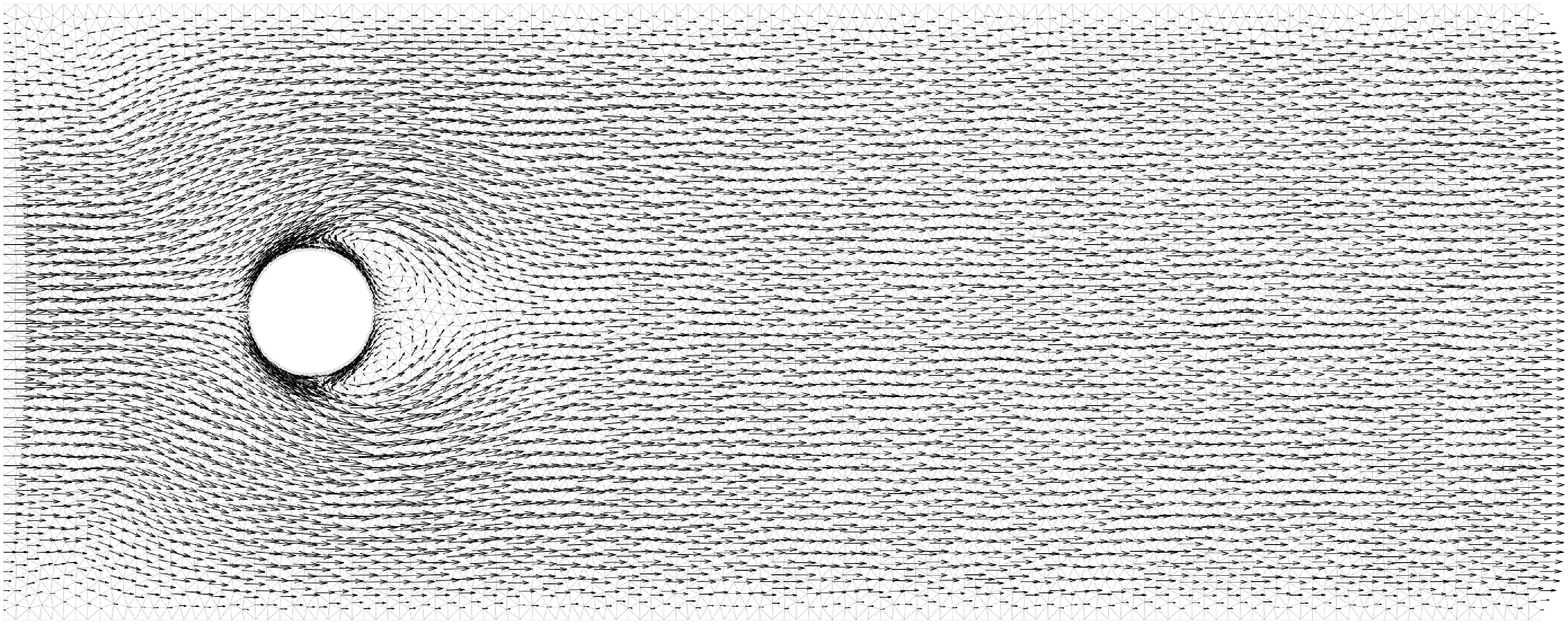}\\
  \end{tabular}
  
\vspace{-1.5cm}

 \begin{tabular}{@{}c@{}}
   \includegraphics[scale=0.2]{./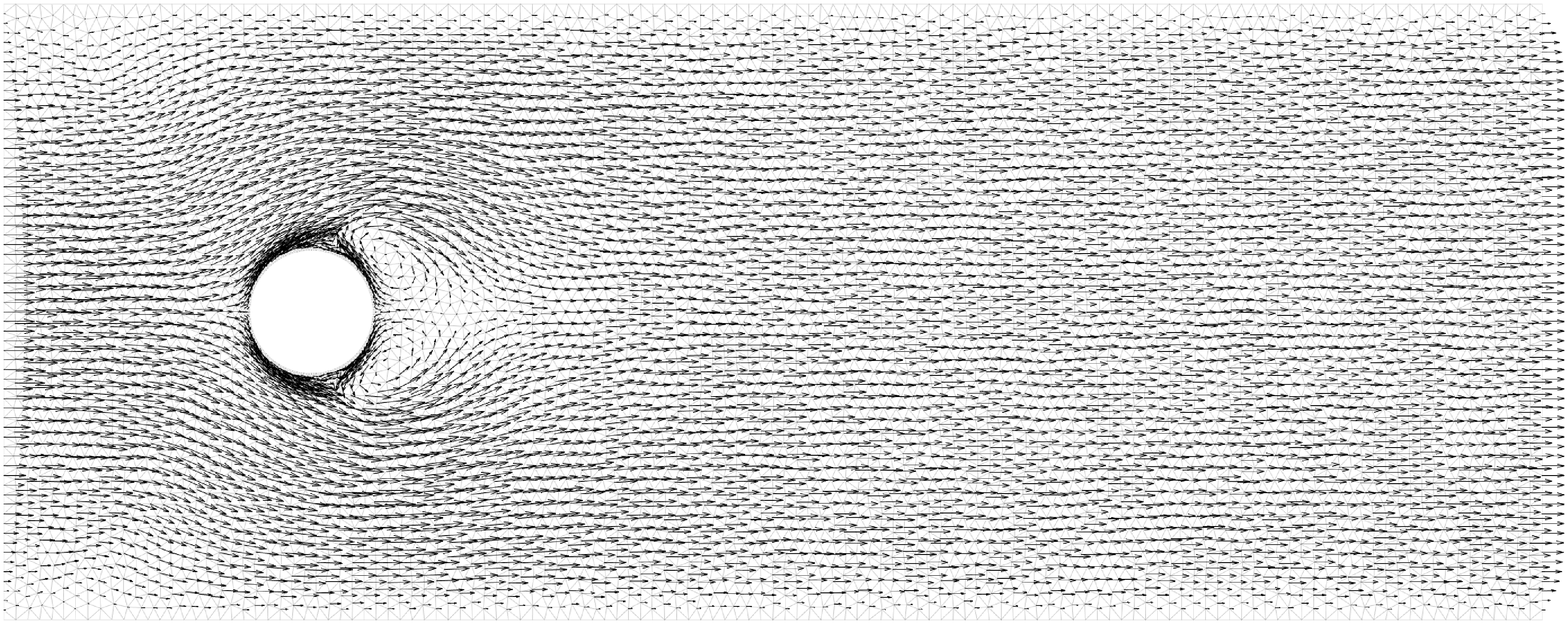}\\
  \end{tabular}
  
\vspace{-2cm}

 \begin{tabular}{@{}c@{}}
   \includegraphics[scale=0.2]{./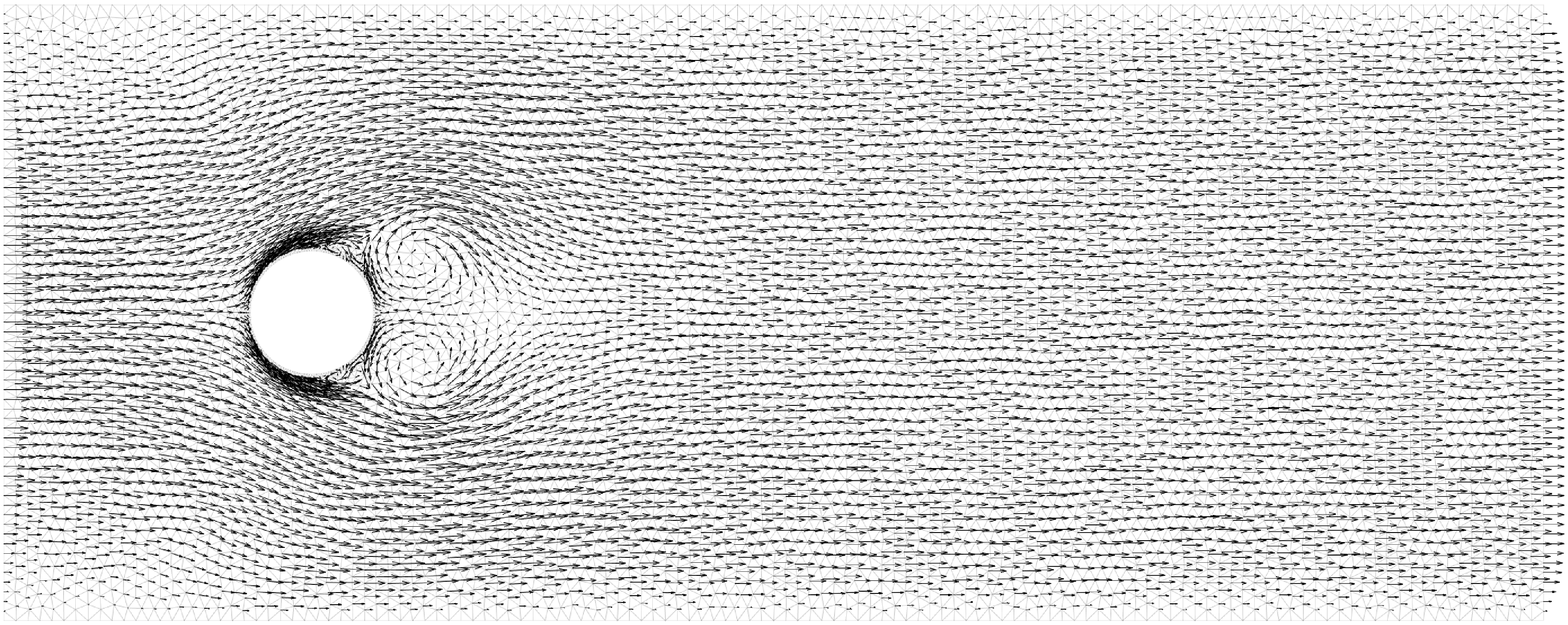}\\
  \end{tabular}
		
	\caption{Velocities after 15, 20, and 30 timesteps from top, $\mu=10^{-6}$, $\gamma = 10^{-1}$.}
	\label{fig:velstab}
\end{figure}
\begin{figure}[ht]
	\begin{center}
		\includegraphics[scale=0.25]{./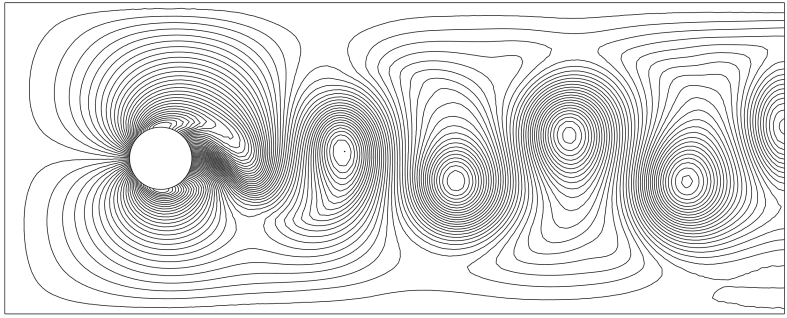}
	\end{center}
	\caption{Relative streamlines at time $t=10$, $\mu =10^{-6}$, $\gamma = 10^{-1}$.}
	\label{fig:vort1e-6}
\end{figure}

\end{document}